\numberwithin{equation}{section}  
\newtheorem{theorem}{Theorem}
\newtheorem*{theorem-non}{Theorem}
\newtheorem{corollary}[theorem]{Corollary}
\newtheorem{lemma}[theorem]{Lemma}
\newtheorem{proposition}[theorem]{Proposition}
\newtheorem*{proposition-non}{Proposition}
\numberwithin{theorem}{section}
\theoremstyle{definition}
\newtheorem{definition}[theorem]{Definition}
\newtheorem{example}[theorem]{Example}
\theoremstyle{remark}
\newtheorem{remark}[theorem]{Remark}
\DeclareMathOperator{\sgn}{sgn}
\begin{document}
\title{Rationality of the periods of Eisenstein series}
\author{Soumyadip Sahu\\School of Mathematics, Tata Institute of Fundamental Research,\\ 1 Homi Bhabha Road, Mumbai, 400005, Maharashtra, India,\\Email: soumyadip.sahu00@gmail.com}
\date{}
	
\maketitle
\begin{abstract}
The article generalizes an observation of Zagier and Gangl to show that the image of the spectral Eisenstein series on a general congruence subgroup of $\text{SL}_2(\mathbb{Z})$, under the Eichler-Shimura isomorphism, is defined over a cyclotomic number field. We use the same technique to generalize an invariant attached to imaginary quadratic fields in connection with the polylogarithm conjecture on the special values of $L$-functions. Our treatment also provides an elementary derivation of the Fourier expansion of the Maass Eisenstein series on congruence subgroups presented as a power series.   
\end{abstract}
 
\textbf{MSC2020:} Primary 11F67; Secondary 11R42

\textbf{Keywords:} Spectral Eisenstein series, Eichler-Shimura isomorphism

\section{Introduction}
\label{section1}
Let $k \geq 2$ be an integer and $V_{k-2, \mathbb{C}} \subseteq \mathbb{C}[X]$ be the $\mathbb{C}$-subspace of polynomials having degree $\leq k-2$. There is a right $\mathbb{C}[\Gamma]$-module stricture on $V_{k-2,\mathbb{C}}$ described by 
\[P(X)\lvert_{\gamma} = (cX+d)^{k-2}P(\frac{aX+b}{cX+d}). \hspace{.3cm}(\substack{\gamma \in \Gamma})\]
Let $\mathcal{G}$ be a congruence subgroup of $\text{SL}_2(\mathbb{Z})$. Suppose that 
$\mathcal{M}_{k}(\mathcal{G})$, resp. $\mathcal{S}_{k}(\mathcal{G})$, be the $\mathbb{C}$-vector space of modular forms, resp. cusp forms, on $\mathcal{G}$. Now suppose $f \in \mathcal{M}_k(\mathcal{G})$. With each $\tau_0$ in the upper half-plane we associate a cocyle on $\mathcal{G}$ with values in $V_{k-2, \mathbb{C}}$ as follows:  
\begin{equation}
\label{1.1}
\text{ES}_{k}^{\tau_0}(f) : \mathcal{G} \to V_{k-2, \mathbb{C}}, \hspace{.3cm} \gamma \mapsto \int^{\tau_0}_{\gamma^{-1}\tau_0}f(\tau)(X-\tau)^{k-2}d\tau.
\end{equation}
The image of $\text{ES}_{k}^{\tau_0}(f)$ in $H^{1}(\mathcal{G}, V_{k-2, \mathbb{C}})$ depends only $f$ and is independent of the choice of $\tau_0$. One defines the Eichler-Shimura homomorphism 
\[[\text{ES}_k]: \mathcal{M}_{k}(\mathcal{G}) \to H^{1}(\mathcal{G}, V_{k-2,\mathbb{C}})\]
by setting $[\text{ES}_k](f) = [\text{ES}_{k}^{\tau_0}(f)]$ for some $\tau_{0}$. Suppose that $\mathcal{S}^{a}_k(\mathcal{G}) := \{\bar{f} \mid f \in \mathcal{S}_k(\mathcal{G})\}$ is the space of antiholomorphic cusp forms. Define the antiholomorphic Eichler-Shimura homomorphism as 
\[[\text{ES}_k]^{a}: \mathcal{S}^{a}_{k}(\mathcal{G}) \to H^{1}(\mathcal{G}, V_{k-2,\mathbb{C}}), \hspace{.2cm} \bar{f} \mapsto \#[\text{ES}_k](\bar{f})\]
where $\#$ refers to the complex conjugation on $H^{1}(\mathcal{G}, V_{k-2,\mathbb{C}})$ induced by the complex conjugation on $V_{k-2,\mathbb{C}}$. The Eichler-Shimura theorem for modular forms states that the Eichler-Shimura homomorphisms together give rise to a Hecke-equivariant isomorphism 
\begin{equation}
\label{1.2}
[\text{ES}_k] \oplus [\text{ES}_k]^{a}: \mathcal{M}_{k}(\mathcal{G}) \oplus \mathcal{S}^{a}_{k}(\mathcal{G}) \xrightarrow{\cong} H^{1}(\mathcal{G}, V_{k-2,\mathbb{C}});
\end{equation}
see \cite{knopp} and Remark~\ref{remark2.3}. The space of Eisenstein series on $\mathcal{G}$, denoted $\mathcal{E}_k(\mathcal{G})$, is the unique linear complement of $\mathcal{S}_k(\mathcal{G})$ in $\mathcal{M}_k(\mathcal{G})$ that is orthogonal to $\mathcal{S}_{k}(\mathcal{G})$ with respect to the Peterson inner product. One can explicitly construct $\mathcal{E}_{k}(\mathcal{G})$ using the spectral Eisenstein series on $\mathcal{G}$. Suppose that $-\text{Id} \notin \mathcal{G}$ whenever $k$ is odd. 
Recall that the spectral Eisenstein series are parameterized by the cusps of $\mathcal{G}$ if $k$ is even and the regular cusps of $\mathcal{G}$ if $k$ is odd. For a cusp $x$ of $\mathcal{G}$, let $E_{k,x}$, whenever defined, denote the spectral Eisenstein series attached to $x$. The collection of functions 
\[\mathcal{B}_{\mathcal{G},k} := \{E_{k,x} \mid \text{$x$ is regular if $k$ is odd}\} \hspace{.3cm}\substack{(k \geq 3)}\]  
forms a basis of $\mathcal{E}_k(\mathcal{G})$ for each $k \geq 3$. If $k = 2$, then the spectral Eisenstein series are nonholomorphic due to a term arising from analytic continuation. Nevertheless, there are explicit rational numbers $\alpha_x \in \mathbb{Q}^{\times}$ so that   
\[\mathcal{B}_{\mathcal{G},2} :=\{E_{2,x} -  \frac{\alpha_x}{\alpha_{\infty}}E_{2,\infty}\mid x \neq \mathcal{G}\infty\}\]
is a basis of $\mathcal{E}_2(\mathcal{G})$ (Section~\ref{section3.1}). For convenience, write $\mathcal{B}_{\mathcal{G},k} = \emptyset$ if $k$ is odd and $-\text{Id} \in \mathcal{G}$. The primary goal of this article is to study the image of $\mathcal{B}_{\mathcal{G},k}$ under the Eichler-Shimura isomorphism \eqref{1.2}. Suppose that $\mathbb{K}$ is a subfield of $\mathbb{C}$. Then the inclusion $V_{k-2, \mathbb{K}} \hookrightarrow V_{k-2, \mathbb{C}}$ induces an change of coefficients isomorphism $H^{1}(\mathcal{G}, V_{k-2,\mathbb{K}}) \otimes \mathbb{C} \cong H^{1}(\mathcal{G}, V_{k-2,\mathbb{C}})$. We say, a class $\alpha \in  H^{1}(\mathcal{G}, V_{k-2,\mathbb{C}})$ is defined over $\mathbb{K}$ if $\alpha$ lies inside the canonical $\mathbb{K}$-form $H^{1}(\mathcal{G}, V_{k-2,\mathbb{K}})$, i.e., $\alpha$ admits a cocyle representative with values in $V_{k-2,\mathbb{K}}$. In the theory of automorphic forms \cite{harder}, one constructs a rational structure on the Eisenstein subspace of cohomology by pulling back the rational structure of the boundary. Our first result is a modular form version of the automorphic construction that works for all congruence subgroups, including those with torsion.     

\begin{theorem}
\label{theorem1.1}
Let $k$ be an integer $\geq 2$ and $\mathcal{G}$ be a congruence subgroup containing the principal level $\Gamma(N)$. Then the image of $\mathcal{B}_{\mathcal{G},k}$ under the Eichler-Shimura isomorphism \eqref{1.2} is defined over $\mathbb{Q}(\mu_N)$ where $\mu_N = \exp (\frac{2\pi i}{N})$. 
\end{theorem}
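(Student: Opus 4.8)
The plan is to make the ``pull-back of the rational structure from the boundary'' concrete at the level of period cocycles, and then to descend the resulting rationality from $\mathbb{C}$ to $\mathbb{Q}(\mu_N)$ using the Hecke action.

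\emph{Reduction.} First I would reduce to $\mathcal{G}=\Gamma(N)$. For every field $\mathbb{K}$ with $\mathbb{Q}\subseteq\mathbb{K}\subseteq\mathbb{C}$ the index $[\mathcal{G}:\Gamma(N)]$ is invertible in $\mathbb{K}$, so inflation identifies $H^1(\mathcal{G},V_{k-2,\mathbb{K}})$ with the $\mathcal{G}/\Gamma(N)$-invariants of $H^1(\Gamma(N),V_{k-2,\mathbb{K}})$, compatibly with change of coefficients and with $[\text{ES}_k]\oplus[\text{ES}_k]^a$. Since the classes coming from $\mathcal{B}_{\mathcal{G},k}$ are automatically $\mathcal{G}/\Gamma(N)$-invariant, it is enough to prove that their images in $H^1(\Gamma(N),V_{k-2,\mathbb{C}})$ admit cocycle representatives with values in $V_{k-2,\mathbb{Q}(\mu_N)}$. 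On $\Gamma(N)$ I would then invoke the power-series Fourier expansion of the spectral Eisenstein series derived in this article: it shows that every $E\in\mathcal{B}_{\Gamma(N),k}$, and in fact every translate $E|_k\sigma$ with $\sigma\in\mathrm{SL}_2(\mathbb{Z})$ — hence the constant term $a_0(E;x)$ of $E$ at every cusp $x$ — has Fourier coefficients in $\mathbb{Q}(\mu_N)$. (For $k=2$ the only new point is that the corrections $\tfrac{\alpha_x}{\alpha_\infty}E_{2,\infty}$ involve $\alpha_x\in\mathbb{Q}^{\times}$, introducing no new coefficients.)

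\emph{Restriction to the boundary.} Let $\mathrm{res}\colon H^1(\mathcal{G},V_{k-2,\mathbb{C}})\to\bigoplus_{x}H^1(\Gamma_x,V_{k-2,\mathbb{C}})$ be the direct sum of the restriction maps to the cusp stabilizers $\Gamma_x$. Modulo $\pm\mathrm{Id}$ each $\Gamma_x$ is cyclic, generated by a parabolic $P_x$ of width $h_x\mid N$, so $H^1(\Gamma_x,V_{k-2,\mathbb{K}})=(V_{k-2,\mathbb{K}})_{\Gamma_x}$ is one-dimensional — with canonical $\mathbb{Q}$-structure picked out by the coefficient of $X^{k-2}$ — precisely when $x$ is regular (and when $k$ is even), and is $0$ otherwise; being induced by restriction of cochains, $\mathrm{res}$ carries the $\mathbb{K}$-structure of the source into that of the target for every $\mathbb{K}$. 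Next I would evaluate $\mathrm{res}(\text{ES}_k^{\tau_0}(E))$ explicitly: conjugating $x$ to $\infty$, the period cocycle sends $P_x$ to $\int_{\tau_0-h_x}^{\tau_0}E(\tau)(X-\tau)^{k-2}\,d\tau$, and since the non-constant Fourier modes of $E$ at $x$ are periodic of period dividing $h_x$ they integrate to $0$ over such an interval, leaving the coefficient of $X^{k-2}$ equal to $h_x\,a_0(E;x)$; thus $\mathrm{res}_x(\text{ES}_k(E))=h_x\,a_0(E;x)$ in $(V_{k-2,\mathbb{C}})_{\Gamma_x}$. This closed form — the modular-forms avatar of the Zagier--Gangl computation of the period of an Eisenstein series between two cusps — has two payoffs: it places $\mathrm{res}(\text{ES}_k(E))$ inside $\bigoplus_x H^1(\Gamma_x,V_{k-2,\mathbb{Q}(\mu_N)})$ by the rationality of the constant terms above; and, because an Eisenstein series whose constant term vanishes at every cusp is a cusp form — hence zero, being orthogonal to $\mathcal{S}_k(\mathcal{G})$ — it shows that $\mathrm{res}$ is injective on the Eisenstein part $H^1_{\mathrm{Eis}}:=([\text{ES}_k]\oplus[\text{ES}_k]^a)(\mathcal{E}_k(\mathcal{G}))$. (For $k=2$, where $V_0=\mathbb{C}$, one runs the same computation inside the trace-zero subspace of $\bigoplus_x\mathbb{C}$.)

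\emph{Descent.} It remains to pass from ``$\mathrm{res}(\text{ES}_k(E))$ is defined over $\mathbb{Q}(\mu_N)$'' to ``$\text{ES}_k(E)$ is defined over $\mathbb{Q}(\mu_N)$''. Here I would use that the Hecke algebra acts on $H^1(\mathcal{G},V_{k-2,\mathbb{Q}(\mu_N)})$ and that, through Eichler--Shimura, $H^1_{\mathrm{Eis}}$ is exactly the sum of the generalized Hecke-eigenspaces attached to Eisenstein eigensystems $T_p\mapsto\psi(p)p^{k-1}+\varphi(p)$; these are $\mathbb{Q}(\mu_N)$-valued and, by the size of their eigenvalues, disjoint from every cuspidal eigensystem (and its complex conjugate). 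Consequently the generalized-eigenspace decomposition over $\mathbb{Q}(\mu_N)$ exhibits $H^1_{\mathrm{Eis}}$ as $H^1_{\mathrm{Eis},\mathbb{Q}(\mu_N)}\otimes_{\mathbb{Q}(\mu_N)}\mathbb{C}$ for a canonical subspace $H^1_{\mathrm{Eis},\mathbb{Q}(\mu_N)}\subseteq H^1(\mathcal{G},V_{k-2,\mathbb{Q}(\mu_N)})$. Since $\mathrm{res}$ is $\mathbb{Q}(\mu_N)$-rational and injective on $H^1_{\mathrm{Eis}}$, it identifies $H^1_{\mathrm{Eis},\mathbb{Q}(\mu_N)}$ with a $\mathbb{Q}(\mu_N)$-form of $\mathrm{res}(H^1_{\mathrm{Eis}})$; the element $\mathrm{res}(\text{ES}_k(E))$ lies in that form, being simultaneously $\mathbb{Q}(\mu_N)$-rational in $\bigoplus_x H^1(\Gamma_x,V_{k-2})$ and in the image of $H^1_{\mathrm{Eis}}$. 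Letting $\beta\in H^1_{\mathrm{Eis},\mathbb{Q}(\mu_N)}$ be its unique preimage, the difference $\text{ES}_k(E)-\beta$ lies in $H^1_{\mathrm{Eis},\mathbb{C}}\cap\ker\mathrm{res}=0$, so $\text{ES}_k(E)=\beta$ is defined over $\mathbb{Q}(\mu_N)$, which is the theorem.

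\emph{Main obstacle.} The technical heart is the explicit evaluation of $\mathrm{res}(\text{ES}_k(E))$ together with the rationality of the constant-term data — the Fourier expansion input — made uniform over all cusps and all congruence subgroups; in particular one must make it robust in the presence of elliptic elements, where the local cohomology $H^1(\Gamma_x,-)$ and the regular/irregular cusp bookkeeping are delicate, and at the boundary weight $k=2$, where the spectral series is not holomorphic and must first be adjusted into $\mathcal{B}_{\mathcal{G},2}$. A complementary route, closer to the original Zagier--Gangl observation, is to write down a $\mathbb{Q}(\mu_N)$-valued cocycle representing $\text{ES}_k(E)$ directly from the Eichler integral — there the transcendental contributions of the non-constant Fourier part must be cancelled against generalized-Bernoulli-number/$L$-value identities and against a universal coboundary — but for groups with hyperbolic or elliptic generators this is far more involved, and the Hecke-descent packaging is what keeps the argument uniform; one must still check carefully that the Hecke operators act by $\mathbb{Q}(\mu_N)$-rational endomorphisms and that Eisenstein and cuspidal eigensystems are genuinely separated.
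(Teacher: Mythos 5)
Your proposal is correct in outline, but it takes a genuinely different route from the paper. The paper works entirely at level $\Gamma(N)$ with the elliptic Eisenstein series $\mathfrak{e}_k(\bar{\lambda},N)$: it computes the special $L$-values $L^{\ast}(e_k(\bar{\lambda},N),r)$ in closed form (Bernoulli polynomials plus boundary polylogarithm terms, Proposition~\ref{proposition4.2}), reads off the induced period cocycle at $T$ and $S$ via Proposition~\ref{proposition2.13}, and then kills the transcendental polylogarithm contributions by an \emph{explicit} coboundary $F\lvert_{\gamma}-F$ --- the generalized Zagier--Gangl trick (Proposition~\ref{proposition4.4new}). Rationality over $\mathbb{Q}(\mu_N)$ for a general $\mathcal{G}$ then follows by spanning and the restriction diagram \eqref{2.3}, exactly as in your reduction step. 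You instead restrict to the cusp stabilizers, observe that the image there is governed by the constant terms (which by \eqref{3.2} lie in $\{0,\pm1\}$, so your appeal to the Fourier expansion is more than enough), prove injectivity of this restriction on the Eisenstein part, and descend via a Hecke-rational projector onto the Eisenstein generalized eigenspaces. This is essentially Harder's automorphic argument, which the paper cites as motivation but deliberately avoids. What your route buys is uniformity and conceptual clarity; what it costs is a load-bearing external input that you acknowledge but do not supply: the disjointness of Eisenstein and cuspidal Hecke eigensystems, without which the projector defining $H^{1}_{\mathrm{Eis},\mathbb{Q}(\mu_N)}$ need not be rational. For $k\geq 3$ the elementary Hecke bound separates the eigenvalues, but at $k=2$ one needs a Ramanujan-type estimate (Deligne, Rankin--Selberg, or Hasse--Weil), which is precisely the machinery the paper's explicit computation sidesteps. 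Your linear-algebra endgame (that $\mathrm{res}(\alpha)$ lying in both $W\otimes\mathbb{C}$ and the $\mathbb{Q}(\mu_N)$-points forces it into $W$, hence $\alpha=\beta$ by injectivity) is sound. Two further remarks: your boundary evaluation $\mathrm{res}_x(\mathrm{ES}_k(E))=h_x a_0(E;x)$ is correct, but only because you project onto the coefficient of $X^{k-2}$ --- for the lower coefficients the nonconstant Fourier modes do \emph{not} integrate to zero against $(X-\xi)^{k-2}$, and one must invoke the coinvariants to discard them; and your approach, unlike the paper's, produces no explicit rational cocycle representative, which the paper needs later for the construction of the invariants in Theorem~\ref{theorem5.1}.
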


Proof of Theorem~\ref{theorem1.1} appears in Section~\ref{section4.2} of the article. Note that it is difficult to calculate a cocycle on $\mathcal{G}$ since this exercise requires knowledge of the function on a large set of generators. Thus, following Pasol and Popa \cite{pasol-popa}, one uses Shapiro's lemma to convert the question into a problem regarding $\text{SL}_2(\mathbb{Z})$. Section~\ref{section2} of the article is devoted to describing a robust reconstruction of the usual formalism of Eichler-Shimura theory, which allows us to connect the classical period cocycle \eqref{1.1} with the work of Pasol and Popa in a precise manner. In light of this description, Theorem~\ref{theorem1.1} seems to contradict the fact that the coefficients of the period polynomial of an Eisenstein series involve transcendental expressions related to the zeta values \cite{zagier}. However, an observation of Zagier and Gangl \cite[7]{zagier-gangl} shows that one can modify the period cocycle attached to the Eisenstein series on $\text{SL}_2(\mathbb{Z})$ by an appropriate coboundary so that the modified cocycle has coefficients in $\mathbb{Q}$. We first establish a generalization of their observation for $\Gamma(N)$ (Section~\ref{section4}) using a geometric family of Eisenstein series arising from the $N$-torsion points of the universal elliptic curve whose image under the Eichler-Shimura isomorphism is defined over $\mathbb{Q}$; cf. \cite{banerjee-merel}.  The result for the general congruence subgroup follows from the abstract equivariance properties of the Eichler-Shimura isomorphism.    

The polylogarithm conjecture \cite{zagier-gangl} is a statement about special values of certain $L$-functions that proposes to describe the $L$-values in terms of the polylogarithm functions. These instances are expected to form part of a more general theory in which a special $L$-value of `motivic origin' is expressed in terms of some transcendental regulator function. Our work concerns a refinement of the polylogarithm conjecture for number fields that predicts that the special values of the partial zeta function of an imaginary quadratic field are given by rational linear combinations of polylogarithm functions evaluated at the elements of the Hilbert class field. Now, the real valued polylogarithm function on the $m$-th Bloch group lifts to a well-defined function with values in $\mathbb{C}/\mathbb{Q}(m)$ where $\mathbb{Q}(m) := (2\pi i)^m\mathbb{Q} \subseteq \mathbb{C}$. Existence of such a lift reflects the compatibility between the monodromy action arising from the choice of branch for polylogarithms and the motivic Hodge structure. Inspired by the lifting property of polylogarithms and the conjecture for imaginary quadratic fields, Zagier and Gangl \cite[7]{zagier-gangl} constructed a naturally defined invariant lifting the value of the partial zeta functions at integers $\geq 2$. We use our computations for $\Gamma(N)$ to extend their construction to the Hecke $L$-function (Theorem~\ref{theorem5.1}). For this purpose, we also provide a derivation of the Fourier expansion of the Maass Eisenstein series with integer weight on $\Gamma(N)$ (Theorem~\ref{theorem3.4}) based on contour integration so that the Fourier series is presented as an element of \[\mathbb{C}[[q_N, \bar{q}_N]][v(\tau), \frac{1}{v(\tau)}]\]
where $v(\tau) = \text{Im}(\tau)$ and $q_N = \exp(\frac{2 \pi i\tau}{N})$; cf. \cite{o'sullivan}.
 
\subsection{Notation and conventions}    
\label{section1.1}
Throughout the article, $\xi$ and $\tau$ are variables with values in the upper half-plane $\mathbb{H}$. Let $i$ denote the imaginary unit. For $\tau \in \mathbb{H}$ one writes $\tau = u(\tau) + i v(\tau)$ where $u(\tau)$ and $v(\tau)$ are real numbers with $v(\tau) > 0$. Now suppose $z \in \mathbb{C}$ and $n$ is a positive integer. Then $\mathfrak{R}_n(z)$ refers to the imaginary or real part of $z$ according as $n$ is even or odd. Moreover, if $z_1$ and $z_2$ are two complex numbers, then $z_1 \sim_{\mathbb{Q}^{\times}} z_2$ means there exists $\alpha \in \mathbb{Q}^{\times}$ so that $z_1 = \alpha z_2$. We normalize the complex exponential function as $\textbf{e}(z):= \exp(2\pi iz)$. Our discussion also makes use of the formal binomial coefficients: 
\[\binom{X}{n} = \begin{cases}
	\frac{X(X-1)\cdots (X-n+1)}{n!}, & \text{if $n \geq 1$}\\
	1, & \text{if $n = 0$}\\
	0, & \text{if $n < 0$}
\end{cases} \hspace{.1cm} \in \mathbb{Q}[X].\]
Given a set $A$, let  $\mathbbm{1}_{A}$ denote the delta function on $A$, i.e., 
\[\mathbbm{1}_{A}(x, y) := \begin{cases}
	1, & x = y;\\
	0, & x \neq y. 
\end{cases}\]
We employ $\mathbb{K}$ to refer to a field of characteristic $0$ which in some places is assumed to be a subfield of $\mathbb{C}$. For a positive integer $N$ set $\mu_N := \mathbf{e}(\frac{1}{N}) \in \mathbb{C}^{\times}$. Let $\text{Hol}(\mathbb{H})$ denote the space of all $\mathbb{C}$-valued holomorphic functions, resp. holomorphic differential $1$-forms, on $\mathbb{H}$. During the calculations, one considers integrals along paths on $\mathbb{H}$. All paths are piecewise smooth unless otherwise stated.     

We abbreviate $\text{SL}_2(\mathbb{Z})$ as $\Gamma$ and write $T = \begin{pmatrix}
	1 & 1\\
	0 & 1
\end{pmatrix}$ and $S = \begin{pmatrix}
	0 & -1\\
	1 & 0
\end{pmatrix}$ for the generators of this group. The letter $\gamma$ stands for an arbitrary $2 \times 2$ matrix and we put $\gamma = \begin{pmatrix} 
	a & b \\
	c & d
\end{pmatrix}$ where $a,b, c,d$ take values in $\mathbb{Z}$. The group $\Gamma$ acts on $\mathbb{H}$ from left by holomorphic automorphisms via fractional linear transformations $(\gamma, \tau) \to \gamma\tau: = \frac {a\tau +b} {c\tau +d}$. Note that the fractional linear transformations define a holomorphic left $\Gamma$-action on the Riemann sphere $\widehat{\mathbb{C}} = \mathbb{C} \cup \{\infty\}$. Suppose that $\partial{\mathbb{H}}$ is the boundary of $\mathbb{H}$ in $\widehat{\mathbb{C}}$. Then $\partial{\mathbb{H}} = \{\infty\} \cup \mathbb{R} = \mathbb{P}^{1}(\mathbb{R}) \subseteq \widehat{\mathbb{C}}$. The subset $\mathbb{P}^{1}(\mathbb{Q}) \subseteq \partial\mathbb{H}$ plays a more significant role in the arithmetic setting. We define the \textit{extended} upper-half plane as \[\mathbb{H}^{\ast} := \mathbb{H} \cup \mathbb{P}^{1}(\mathbb{Q}).\] For $x \in \mathbb{P}^1(\mathbb{Q})$ let $\Gamma_{x}$ denote the stabilizer of $x$ in $\Gamma$. With this notation $\Gamma_{\infty} = \{\pm T^{n} \mid n \in \mathbb{Z}\}$. If $\mathcal{G}$ is a subgroup of $\Gamma$ then set $\mathcal{G}_x:= \mathcal{G} \cap \Gamma_x$. We employ the notation $\mathcal{C}(\cdot)$, resp. $\mathcal{C}_{\infty}(\cdot)$, for the set of cusps, resp. regular cusps, attached to a congruence subgroup. As per standard convention, one always denotes a cusp using a choice of representative in $\mathbb{P}^{1}(\mathbb{Q})$. For each $k \in \mathbb{Z}$, there is a right action $\lvert_{k}$ of $\Gamma$ on $\text{Hol}(\mathbb{H})$ given by 
\[f\lvert_{k,\gamma}(\tau):= (c\tau+d)^{-k}f(\gamma\tau).\] 
If $k$ is fixed throughout the discussion, then we drop it from the subscript. Now suppose $k \geq 2$. One uses the notation $\mathcal{M}_k(\mathcal{G})$, $\mathcal{S}_k(\mathcal{G})$, and $\mathcal{E}_k(\mathcal{G})$ to refer to the spaces of all holomorphic modular forms, cusp forms, and Eisenstein series as in the introductory discussion. Moreover, $\mathcal{S}^{a}_k(\mathcal{G}) = \{\bar{f} \mid f \in \mathcal{S}_k(\mathcal{G})\}$ is the space of antiholomorphic forms.

Let $n$ be a nonnegative integer. Set $V_{n}:= \{P \in \mathbb{Z}[X] \mid \text{deg}(P) \leq n\}$. The group $\text{GL}_2(\mathbb{Z})$ acts on $V_n$ from right by $P(X)\lvert_{-n,\gamma} := (cX+d)^{n}P\Big(\frac{aX+b} {cX+d}\Big)$.
We omit the index of weight from the subscript if it is clear from the context. More generally, let $\mathbb{K}$ be a field of characteristic zero and set $V_{n, \mathbb{K}} = \mathbb{K} \otimes_{\mathbb{Z}} V_n$. 
Then the formula above defines a right $\mathbb{K}[\text{GL}_2(\mathbb{Z})]$ action on $V_{n,\mathbb{K}}$. The restriction of this action to $\Gamma$ is our primary interest. One also needs to consider the $\text{Hol}(\mathbb{H})$-module $\text{Hol}(\mathbb{H})[X]_n := \text{Hol}(\mathbb{H}) \otimes_{\mathbb{Z}} V_n$. In this article, we mostly deal with group cohomology with coefficients in a right module. As usual, $Z^{1}(\cdot, \cdot)$ denotes the collection of $1$-cocycles. If $\mathcal{X}, \mathcal{Y}$ are two sets then $\text{Fun}(\mathcal{X}, \mathcal{Y})$ is the set of all set-theoretic functions from $\mathcal{X}$ to $\mathcal{Y}$. Suppose that $\mathcal{Y}$ is a $\mathbb{K}$-vector space for some field $\mathbb{K}$. Then $\text{Fun}(\mathcal{X}, \mathcal{Y})$ has a natural $\mathbb{K}$-vector space structure arising from pointwise addition and scalar multiplication. Now suppose $G$ is an abstract group and $\mathcal{X}$, $\mathcal{Y}$ are right $G$-sets. The set $\text{Fun}(\mathcal{X}, \mathcal{Y})$ admits a right $G$-action given by 
\begin{equation}
\label{1.3}
(F.g)(x) = F(xg^{-1})g. 
\end{equation}

\section{Regularization of the Eichler-Shimura cocyle} 
\label{section2}   
\subsection{Basic constructions}
\label{section2.1}
In the theory of cusp forms, it is conventional to compute the period cocycle \eqref{1.1} with respect to the base-point at infinity so that the coefficients of the period polynomial encode the special values of $L$-functions. A naive approach extending this procedure to the Eisenstein series encounters issues regarding convergence. To circumvent the problem of dealing with integrals with the base point on the boundary, we develop our formalism in terms of a differential variant of the classical Eichler-Shimura integral that resembles the notion of Eichler integrals \cite{knopp} in the theory of Eichler cohomology.

\begin{definition}
\label{definition2.1}
Let $k$ be an integer $\geq 2$ and $f \in \text{Hol}(\mathbb{H})$. A weight $k$ Eichler-Shimura integral of $f$ is a polynomial $\mathbb{I}_{k}(\tau, f,X) \in \text{Hol}(\mathbb{H})[X]_{k-2}$ that satisfies \[\partial_{\tau}\big(\mathbb{I}_{k}(\tau, f,X)\big) = f(\tau)(X-\tau)^{k-2}d\tau\] where $\partial_{\tau}(\sum_{j=0}^{k-2} c_j(\tau)X^{j}) := \sum_{j=0}^{k-2} c'_{j}(\tau) X^{j}d\tau$. 
\end{definition}

The familiar Eichler-Shimura integral $\int^{\tau}_{\tau_0}f(\xi)(X-\xi)^{k-2}d\xi$ with $\tau_{0} \in \mathbb{H}$, provides a canonical example of an Eichler-Shimura integral in the sense described above. Two choices of Eichler-Shimura integrals differ by an element of $V_{k-2, \mathbb{C}}$. Now suppose $\mathcal{X}$ is a collection of holomorphic functions on $\mathbb{H}$. A choice of weight $k$ Eichler-Shimura integrals for $\mathcal{X}$ is a set-theoretic function $\mathbb{I}_{k}: \mathcal{X} \to \text{Hol}(\mathbb{H})[X]_{k-2}$ satisfying $\partial_{\tau}\big(\mathbb{I}_{k}(\tau, f,X)\big) = f(\tau)(X-\tau)^{k-2}d\tau$.

\begin{example}
\label{example2.2} 
Let $\tau_{0} \in \mathbb{H}$. The \textit{canonical choice} for $\mathcal{X}$ with respect to $\tau_{0}$, denoted $\mathbb{I}_{k}^{\tau_{0}}$, is given by $f \to \int_{\tau_{0}}^{\tau}f(\xi)(X-\xi)^{k-2}d\xi$.  
\end{example}

Given two choice functions for $\mathcal{X}$, say $\mathbb{I}^{(1)}_{k}$ and $\mathbb{I}^{(2)}_{k}$, we obtain a set map 
\begin{equation*}
	\mathbb{I}^{(1)}_{k} - \mathbb{I}^{(2)}_{k}: \mathcal{X} \to V_{k-2, \mathbb{C}},\hspace{.3cm} f \to \mathbb{I}^{(1)}_{k}(\tau, f, X) - \mathbb{I}^{(2)}_{k}(\tau, f, X). 
\end{equation*} 

\begin{remark}
\label{remark2.3} Let the notation be as in Definition~\ref{definition2.1}. We say $F \in \text{Hol}(\mathbb{H})$ is a weight $k$ \textit{Eichler integral} of $f$ if $F$ satisfies the differential equation 
\[\frac{d^{k-1}F}{d\tau^{k-1}} = (k-2)! F.\]
Let $\tau_{0} \in \mathbb{H}$ be fixed. Observe that $\mathbb{I}_{k}^{\tau_{0}}(\tau, f, \tau)$ is a holomorhic solution of the above differential equation. Therefore, if $\mathbb{I}_{k}(\tau, f, X)$ is an Eichler-Shimura integral, then  $\mathbb{I}_{k}(\tau, f, \tau)$ is an Eichler integral since $\frac{d^{k-1}}{d\tau^{k-1}}$ annihilates polynomials of degree $\leq k-2$.  Conversely, an Eichler integral is in the form $\mathbb{I}_{k}^{\tau_{0}}(\tau, f, \tau) + P(\tau)$ for some $P[X] \in V_{k-2, \mathbb{C}}$. Thus, it can be obtained as $\mathbb{I}_{k}(\tau, f, \tau)$ for a suitable Eichler-Shimura integral $\mathbb{I}_{k}(\tau, f, X)$. In particular, all our constructions and results can be interpreted in terms of the theory of Eichler cohomology and vice versa.    
\end{remark}

Let $\mathcal{G}$ be a congruence subgroup of $\Gamma$. Suppose that $\mathcal{X}$ is a subset of $\text{Hol}(\mathbb{H})$ which is stable under the $\lvert_k$-action of $\mathcal{G}$. The $\mathcal{G}$-actions on $\mathcal{X}$ and $V_{k-2, \mathbb{C}}$ induce a right $\mathbb{C}[\mathcal{G}]$ module structure on $\text{Fun}(\mathcal{X}, V_{k-2, \mathbb{C}})$ according to the recipe in \eqref{1.3}. Let $\mathbb{I}_{k}$ be a choice of weight $k$ Eichler-Shimura integrals for $\mathcal{X}$. Define the period function attached to the choice function $\mathbb{I}_k$ by  
\begin{equation}
\begin{aligned}
\label{2.1}
\mathsf{p}_k &: \mathcal{G} \to \text{Fun}(\mathcal{X}, V_{k-2,\mathbb{C}}),\\
\mathsf{p}_{k}(\gamma)(f) & :=  (cX+d)^{k-2}\mathbb{I}_{k}(\gamma\tau, f\lvert_{ \gamma^{-1}}, \frac {aX+b} {cX+d}) - \mathbb{I}_{k}(\tau, f,X).
\end{aligned}
\end{equation}
Let $\mathsf{p}_k^{\tau_{0}}$ denote the period function attached to the canonical choice $\mathbb{I}_{k}^{\tau_{0}}$. Then a direct calculation using the change of variables formula shows that \[\mathsf{p}_k^{\tau_{0}}(\gamma)(f) = \int_{\gamma^{-1}\tau_{0}}^{\tau_{0}} f(\xi)(X-\xi)^{k-2}d\xi\]  
consistent with the classical definition \eqref{1.1}.

\begin{lemma}
\label{lemma2.4}
Let $\mathbb{I}_k$ be a choice of Eichler-Shimura integrals for $\mathcal{X}$. Then the formula \eqref{2.1} gives rise to a well-defined cocycle on $\mathcal{G}$. Moreover, the image of $\mathsf{p}_k$ in $H^{1}\big(\mathcal{G}, \emph{Fun}(\mathcal{X},V_{k-2,\mathbb{C}})\big)$ does not depend on the choice function. 
\end{lemma}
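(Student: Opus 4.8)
The plan is to verify the cocycle relation directly from the defining formula \eqref{2.1} and then handle the independence of the choice function by a coboundary computation. First I would write out $\mathsf{p}_k(\gamma)(f)$ in a more symmetric way. Since $\mathbb{I}_k(\tau,f,X)$ is any Eichler-Shimura integral of $f$, its image under the weight-$(k-2)$ action of $\gamma$, namely $(cX+d)^{k-2}\mathbb{I}_k(\gamma\tau, f|_{\gamma^{-1}}, \tfrac{aX+b}{cX+d})$, is again an Eichler-Shimura integral of $f$: this is exactly the change-of-variables computation already used for the canonical choice $\mathbb{I}_k^{\tau_0}$, which shows $\partial_\tau$ of this expression equals $f(\tau)(X-\tau)^{k-2}d\tau$. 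Hence both terms in \eqref{2.1} are Eichler-Shimura integrals of $f$, so their difference lies in $V_{k-2,\mathbb{C}}$, and $\mathsf{p}_k(\gamma)(f)\in V_{k-2,\mathbb{C}}$ as required; this already shows $\mathsf{p}_k$ is well-defined as a map $\mathcal{G}\to\operatorname{Fun}(\mathcal{X},V_{k-2,\mathbb{C}})$.

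Next I would introduce the shorthand: for $\gamma\in\mathcal{G}$ and $f\in\mathcal{X}$ let $J(\gamma,f) := (cX+d)^{k-2}\mathbb{I}_k(\gamma\tau, f|_{\gamma^{-1}}, \tfrac{aX+b}{cX+d})$, so that $\mathsf{p}_k(\gamma)(f) = J(\gamma,f) - \mathbb{I}_k(\tau,f,X)$. The key algebraic point is the cocycle identity for $J$: applying the formula twice and using the associativity of the $|_k$-action on $\operatorname{Hol}(\mathbb{H})$ together with the associativity of the $|_{-(k-2)}$-action on polynomials, one gets $J(\gamma_1\gamma_2, f) = \big(J(\gamma_1,f)\big)|_{\gamma_2}$-type relation — more precisely, expanding $J(\gamma_1\gamma_2,f)$ and comparing with $J(\gamma_1, f|_{\gamma_2^{-1}})$ acted on by $\gamma_2$ shows these agree because $\mathbb{I}_k$ is evaluated on the same holomorphic function and the matrix cocycle $(c_{\gamma_1\gamma_2}X+d_{\gamma_1\gamma_2})$ factors appropriately. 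Granting this, the cocycle relation $\mathsf{p}_k(\gamma_1\gamma_2) = \mathsf{p}_k(\gamma_1).\gamma_2 + \mathsf{p}_k(\gamma_2)$ in $\operatorname{Fun}(\mathcal{X},V_{k-2,\mathbb{C}})$ with the action \eqref{1.3} follows by the usual telescoping: $J(\gamma_1\gamma_2,f) - \mathbb{I}_k(\tau,f,X) = \big(J(\gamma_1,f|_{\gamma_2^{-1}}) - \mathbb{I}_k(\tau, f|_{\gamma_2^{-1}}, X)\big)|_{\gamma_2} + \big(J(\gamma_2,f) - \mathbb{I}_k(\tau,f,X)\big)$, where I must be careful to match $(F.\gamma)(f) = F(f|_{\gamma^{-1}})|_{\gamma}$ as in \eqref{1.3} since $\mathcal{X}$ carries the $|_k$-action and $\gamma$ acts on the value in $V_{k-2,\mathbb{C}}$ by $|_{-(k-2)}$. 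The bookkeeping of which power of $(cX+d)$ and which argument of $\mathbb{I}_k$ appears where is the step most prone to sign and inverse errors, so that is where I would be most careful; I would double-check it against the canonical case $\mathsf{p}_k^{\tau_0}(\gamma)(f) = \int_{\gamma^{-1}\tau_0}^{\tau_0} f(\xi)(X-\xi)^{k-2}d\xi$, where the cocycle relation is the additivity of the path integral over $\gamma_1^{-1}\gamma_2^{-1}\tau_0 \to \gamma_2^{-1}\tau_0 \to \tau_0$.

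Finally, for the independence of the cohomology class from the choice of Eichler-Shimura integrals: given two choices $\mathbb{I}_k^{(1)}$ and $\mathbb{I}_k^{(2)}$, their difference $\delta := \mathbb{I}_k^{(1)} - \mathbb{I}_k^{(2)}$ is a set-theoretic map $\mathcal{X}\to V_{k-2,\mathbb{C}}$, i.e., an element of $\operatorname{Fun}(\mathcal{X},V_{k-2,\mathbb{C}})$, since two Eichler-Shimura integrals of the same $f$ differ by a constant polynomial. A direct substitution into \eqref{2.1} shows $\mathsf{p}_k^{(1)}(\gamma) - \mathsf{p}_k^{(2)}(\gamma) = (cX+d)^{k-2}\delta(f|_{\gamma^{-1}})\big(\tfrac{aX+b}{cX+d}\big) - \delta(f) = (\delta.\gamma)(f) - \delta(f)$, which is precisely the coboundary $d\delta$ of the $0$-cochain $\delta$ with respect to the action \eqref{1.3}. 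Hence $\mathsf{p}_k^{(1)} - \mathsf{p}_k^{(2)}$ is a coboundary and the two cocycles define the same class in $H^1\big(\mathcal{G},\operatorname{Fun}(\mathcal{X},V_{k-2,\mathbb{C}})\big)$. I do not expect any serious obstacle here — the only subtlety is again matching the convention \eqref{1.3} so that the coboundary comes out with the correct sign, which the telescoping argument in the previous step already pins down.
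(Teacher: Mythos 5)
Your proposal is correct and follows essentially the same route as the paper: both rest on the observation that two Eichler--Shimura integrals of the same $f$ differ by an element of $V_{k-2,\mathbb{C}}$, so that two period functions differ by the coboundary of the $0$-cochain $\mathbb{I}_k^{(1)}-\mathbb{I}_k^{(2)}\in\mathrm{Fun}(\mathcal{X},V_{k-2,\mathbb{C}})$, which settles the independence of the class. The only organizational difference is that the paper verifies the cocycle relation just for the canonical choice $\mathsf{p}_k^{\tau_0}$ (via additivity of the path integral) and transfers it to a general $\mathbb{I}_k$ through that same coboundary identity, whereas you verify it directly for an arbitrary choice by the telescoping with $J(\gamma,f)$ --- which also works, the hidden point being that one may simultaneously replace $\tau$ by $\gamma_2\tau$ when applying $\lvert_{\gamma_2}$ to the $\tau$-independent difference.
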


\begin{proof}
Let $\tau_{0} \in \mathbb{H}$ be fixed and $\gamma \in \mathcal{G}$. A direct calculation using \eqref{2.1} yields  
\begin{equation*}
\mathsf{p}_{k}(\gamma) - \mathsf{p}^{\tau_{0}}_k(\gamma) = (\mathbb{I}_{k} - \mathbb{I}^{\tau_{0}}_{k})\bigl\lvert_{\gamma} - (\mathbb{I}_{k} - \mathbb{I}^{\tau_{0}}_{k})
\end{equation*}
where we are considering $\mathbb{I}_{k} - \mathbb{I}^{\tau_{0}}_{k}$ as an element of $\text{Fun}(\mathcal{X}, V_{k-2,\mathbb{C}})$. The explicit expression for $\mathsf{p}_k^{\tau_{0}}$ immediately verifies the first part of the assertion for $\mathbb{I}_k^{\tau_{0}}$. One obtains the statement for $\mathbb{I}_k$ using the identity above. It also shows that the two cocycles differ by a coboundary as desired. 
\end{proof}

Note that our setting of period function allows a nontrivial action of $\mathcal{G}$ on the set of test functions $\mathcal{X}$, contrary to the classical approach, which works with $\mathcal{G}$-invariant functions. In the sequel, we refer to the period function \eqref{2.1} as the \textit{equivariant} period function to distinguish our formalism from the classical construction. The equivariant period function is particularly appropriate for the induced perspective towards Eichler-Shimura theory (Section~\ref{section2.3}) since here one can always evaluate the period function at $\gamma = S$ even if $f$ is not $S$-invariant. Our equivariant approach also works well for the modular forms with a nebentypus character, though we do not elaborate on this aspect in the present article. Given a congruence subgroup $\mathcal{G}$ and $\mathcal{G}$-stable subset $\mathcal{X}$, Lemma~\ref{lemma2.4} yields a canonical cohomology class
\[[\mathsf{p}_k] \in H^{1}\big(\mathcal{G}, \text{Fun}(\mathcal{X},V_{k-2,\mathbb{C}})\big)\]
using the Eichler-Shimura integrals. This cohomology class provides an equivariant analog of the usual Eichler-Shimura map as explained below.

From now on, we assume that $\mathcal{X}$ consists of $\mathcal{G}$ invariant functions. Let $V$ be a right $\mathbb{K}[\mathcal{G}]$-module for some field $\mathbb{K}$. Then there is a $\mathbb{K}$-linear ismorphism \[\Phi_{\text{cy}}: Z^{1}\big(\mathcal{G}, \text{Fun}(\mathcal{X}, V)\big) \xrightarrow{\cong} \text{Fun}\big(\mathcal{X}, Z^{1}(\mathcal{G}, V)\big),\, \Phi_{\text{cy}}(P)(f)(\gamma) := P(\gamma)(f)\]
that descends to an isomorphism: 
\[\overline{\Phi}_{\text{cy}}: H^{1}\big(\mathcal{G}, \text{Fun}(\mathcal{X}, V)\big) \xrightarrow{\cong} \text{Fun}\big(\mathcal{X}, H^{1}(\mathcal{G}, V)\big).\]  
Suppose that $\mathbb{I}_{k}$ is a choice of Eichler-Shimura integrals for $\mathcal{X}$. We define the Eichler-Shimura cocyle map attached to $\mathbb{I}_k$ by 
\[\text{ES}_k: \mathcal{X} \to Z^{1}(\mathcal{G}, V_{k-2,\mathbb{C}}), \hspace{.3cm} \text{ES}_k := \Phi_{\text{cy}}(\mathsf{p}_k).\]
Moreover, the \textit{Eichler-Shimura morphism} attached to the pair $(\mathcal{G}, \mathcal{X})$ is 
\begin{equation}
\label{2.2}
[\text{ES}_k] : \mathcal{X} \to H^{1}\big(\mathcal{G},V_{k-2,\mathbb{C}}\big), \hspace{.3cm} [\text{ES}_k](f) := \overline{\Phi}_{\text{cy}}([\mathsf{p}_{k}]).
\end{equation}
Lemma~\ref{lemma2.4} shows, the map \eqref{2.2} does not depend on the choice of $\mathbb{I}_k$. Observe that the cocyle $\Phi_{\text{cy}}(\mathsf{p}^{\tau_{0}}_k)(f)$ equals $\text{ES}_k^{\tau_{0}}(f)$ defined in \eqref{1.1}. Therefore, the Eichler-Shimura morphism constructed above coincides with the classical Eichler-Shimura map given by \eqref{1.1}. Here, our construction \eqref{2.2} does not assume a linear structure on $\mathcal{X}$. Now suppose $\mathbb{K}$ is a subfield of $\mathbb{C}$ and $\mathcal{X}$ is a $\mathbb{K}$-subspace of $\text{Hol}(\mathbb{H})$. One can calculate the Eichler-Shimura morphism using a canonical choice function to discover that $[\text{ES}_k]$ is automatically $\mathbb{K}$-linear. Similarly, if $\mathcal{X}$ is closed under a double coset operator $[\mathcal{G}\alpha\mathcal{G}]$ for some $\alpha \in \text{GL}_2^{+}(\mathbb{Q})$, then a straight-forward computation using the canonical choice verifies that $[\text{ES}_k]$ commutes with the action of $[\mathcal{G}\alpha\mathcal{G}]$ on its domain and codomain \cite[8.3]{shimura94}. Next, we discuss the equivariance of the Eichler-Shimura map under the action of a larger group, which plays a crucial role in our calculations.

Let $\mathcal{H}$ be an abstract group and $V$ be a right $\mathbb{K}[\mathcal{H}]$-module where $\mathbb{K}$ is a field of characteristic $0$. Suppose that $\mathcal{G}$ is a subgroup of $\mathcal{H}$. Given $h \in \mathcal{H}$, there is an isomorphism at the level of cocycles
\[Z^{1}(\mathcal{G}, V) \to Z^{1}(h^{-1}\mathcal{G}h, V); \hspace{.3cm} \text{$P \mapsto P|_{h}$, $P\lvert_{h}(x) := P(hxh^{-1})h$}\]
which gives rise to an isomorphism $H^{1}(\mathcal{G}, V) \cong H^{1}(h^{-1}\mathcal{G}h, V)$. Suppose that $\mathcal{G}$ is a normal subgroup of $\mathcal{H}$. Then $[P] \mapsto [P\lvert_{h}]$ is a well-defined right $\mathcal{H}$ action on $H^{1}(\mathcal{G},V)$. In this situation, the action of $\mathcal{H}$ on $H^{1}(\mathcal{G}, V)$ factors through $\mathcal{G}$, i.e., the elements of $\mathcal{G}$ act trivially on cohomology. If $[\mathcal{H}: \mathcal{G}] < \infty$, then the canonical restriction homomorphism induces an isomorphism $H^{1}(\mathcal{H}, V) \cong H^{1}(\mathcal{G}, V)^{\mathcal{H}}$; see \cite[III.10]{ksbrown}. 

\begin{lemma}
\label{lemma2.5}
Let the notation be as in \eqref{2.2}. Suppose that $\mathcal{H}$ is a congruence subgroup containing $\mathcal{G}$ as a normal subgroup. Further, assume that $\mathcal{X}$ is stable under the action of $\mathcal{H}$. Then $[\emph{ES}_k]$ is compatible with the $\mathcal{H}$-action on its domain and codomain. 
\end{lemma}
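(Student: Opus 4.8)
The plan is to reduce the claim to a computation with the canonical choice function $\mathbb{I}_k^{\tau_0}$ and then to exploit the naturality of the classical Eichler–Shimura integral under change of base point. First I would recall that by Lemma~\ref{lemma2.4} the class $[\mathsf{p}_k]$ — and hence, via $\overline{\Phi}_{\text{cy}}$, the morphism $[\text{ES}_k]$ of \eqref{2.2} — is independent of the choice of Eichler–Shimura integrals, so it suffices to verify the equivariance for the explicit cocycle $\text{ES}_k^{\tau_0}(f)(\gamma) = \int_{\gamma^{-1}\tau_0}^{\tau_0} f(\xi)(X-\xi)^{k-2}\,d\xi$. The $\mathcal{H}$-action on the codomain is the one described just before the lemma: for $h \in \mathcal{H}$ and a cocycle $P \in Z^1(\mathcal{G}, V_{k-2,\mathbb{C}})$ one sets $P|_h(x) = P(hxh^{-1})\lvert_{-(k-2),h}$, which is well-defined on $H^1(\mathcal{G}, V_{k-2,\mathbb{C}})$ because $\mathcal{G} \trianglelefteq \mathcal{H}$; the $\mathcal{H}$-action on the domain $\mathcal{X}$ is the $\lvert_k$-action. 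So the statement to prove is that $[\text{ES}_k](f\lvert_{k,h}) = [\text{ES}_k](f)\lvert_h$ in $H^1(\mathcal{G}, V_{k-2,\mathbb{C}})$ for every $h \in \mathcal{H}$ and $f \in \mathcal{X}$.

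The key computation is the following cocycle-level identity. Fix $\tau_0 \in \mathbb{H}$ and write $\tau_1 = h^{-1}\tau_0$. On one hand, using the base point $\tau_1$,
\[
\text{ES}_k^{\tau_1}(f\lvert_{k,h})(\gamma) \;=\; \int_{\gamma^{-1}\tau_1}^{\tau_1} (f\lvert_{k,h})(\xi)\,(X-\xi)^{k-2}\,d\xi .
\]
Applying the change of variables $\xi \mapsto h\xi$ together with the definition of the $\lvert_k$-action and the $\lvert_{-(k-2)}$-action on $V_{k-2,\mathbb{C}}$ — exactly the manipulation already used in Section~\ref{section2.1} to identify $\mathsf{p}_k^{\tau_0}$ with the classical period — transforms this into
\[
\Bigl(\int_{h\gamma^{-1}\tau_1}^{h\tau_1} f(\eta)(X-\eta)^{k-2}\,d\eta\Bigr)\Bigl\lvert_{-(k-2),h}
\;=\;\Bigl(\int_{(h\gamma^{-1}h^{-1})\tau_0}^{\tau_0} f(\eta)(X-\eta)^{k-2}\,d\eta\Bigr)\Bigl\lvert_{-(k-2),h}
\;=\;\text{ES}_k^{\tau_0}(f)(h\gamma h^{-1})\lvert_{-(k-2),h},
\]
which is precisely $\bigl(\text{ES}_k^{\tau_0}(f)\lvert_h\bigr)(\gamma)$. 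Thus the cocycle $\text{ES}_k^{\tau_1}(f\lvert_{k,h})$ equals $\text{ES}_k^{\tau_0}(f)\lvert_h$ on the nose. Since $\text{ES}_k^{\tau_1}(f\lvert_{k,h})$ and $\text{ES}_k^{\tau_0}(f\lvert_{k,h})$ represent the same class (independence of base point, Lemma~\ref{lemma2.4}), passing to cohomology gives $[\text{ES}_k](f\lvert_{k,h}) = [\text{ES}_k](f)\lvert_h$, as desired.

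I would then close by noting two bookkeeping points: first, one should check that $h\gamma h^{-1}$ runs over $\mathcal{G}$ as $\gamma$ does, which is exactly the normality hypothesis $\mathcal{G}\trianglelefteq\mathcal{H}$ and is what makes $\text{ES}_k^{\tau_0}(f)\lvert_h$ a cocycle on $\mathcal{G}$ in the first place; and second, that the whole argument is insensitive to the identification $\overline{\Phi}_{\text{cy}}$, since $\overline{\Phi}_{\text{cy}}$ is the tautological "evaluate at $f$" map and commutes with the substitution $\gamma \mapsto h\gamma h^{-1}$ and with the coefficient twist by $h$. The main obstacle — really the only place demanding care — is the change-of-variables step: one must track how the integrand $(X-\xi)^{k-2}$ interacts with the substitution $\xi = h\eta$, and verify that the automorphy factor produced by $d\xi = \frac{d\eta}{(c_h\eta+d_h)^2}$ combines with $f(h\eta) = (c_h\eta+d_h)^k (f\lvert_{k,h^{-1}})(\eta)$ (for the reverse substitution) and with the transformation of $(X - h\eta)^{k-2}$ to yield exactly the $\lvert_{-(k-2),h}$-twist on $V_{k-2,\mathbb{C}}$ and no stray factor. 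This is the same identity underlying formula \eqref{2.1}, so it can be cited rather than redone in full.
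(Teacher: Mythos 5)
Your proposal is correct and follows the same route the paper indicates: a direct calculation with the canonical choice $\mathbb{I}_k^{\tau_0}$, using the change of variables $\xi\mapsto h\xi$ to identify $\mathrm{ES}_k^{h^{-1}\tau_0}(f\lvert_{k,h})(\gamma)$ with $\mathrm{ES}_k^{\tau_0}(f)(h\gamma h^{-1})\lvert_h$ and then invoking base-point independence (Lemma~\ref{lemma2.4}) to conclude on cohomology. The change-of-variables bookkeeping you flag does close up exactly as you claim, by the same automorphy-factor cancellation that identifies $\mathsf{p}_k^{\tau_0}(\gamma)(f)$ with the classical period integral below \eqref{2.1}.
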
 

\begin{proof}
Follows from a direct calculation using a canonical choice of Eichler-Shimura integrals for $\mathcal{X}$. 
\end{proof}

With the notation in the lemma, let $\mathcal{Y}$ be a subset of $\mathcal{X}^{\mathcal{H}}$. Then the Eichler-Shimura morphisms for $(\mathcal{G}, \mathcal{X})$ and $(\mathcal{H}, \mathcal{Y})$ fit into a commutative diagram 
\begin{equation}
\label{2.3}
\begin{tikzcd}
\mathcal{Y} \arrow[r, "{[\text{ES}_k]}"]\arrow[d] & H^{1}(\mathcal{H}, V_{k-2, \mathbb{C}}) \arrow[d]\\
\mathcal{X} \arrow[r, "{[\text{ES}_k]}"] & H^{1}(\mathcal{G}, V_{k-2, \mathbb{C}})
\end{tikzcd}
\end{equation}
where the vertical arrows are restrictions. In this article, we use the diagram above to reduce the problem of computing the Eichler-Shimura map for a general subgroup to a principal level contained in it.

\subsection{Base point at the boundary}
\label{section2.2}
Our formalism of the Eichler-Shimura morphism does not assume any growth condition at the boundary for the test functions in $\mathcal{X}$. However, to construct Eichler-Shimura integrals with the base point at the boundary, one needs to impose a suitable growth condition at the boundary.  The closure of $\mathbb{H}$ in the Riemann sphere equals $\overline{\mathbb{H}} := \mathbb{H} \cup \mathbb{R} \cup \{\infty\}$. We endow the extended upper half-plane $\mathbb{H}^{\ast} \subseteq \overline{\mathbb{H}}$ with the subspace topology arising from $\widehat{\mathbb{C}}$ because it is favorable for discussing path integrals. To handle the convergence problem at the boundary, however, one needs a special family of punctured neighborhoods around boundary points that resemble the Satake topology for the Baily-Borel compactification \cite[2.1]{goresky} and transform well under the action of $\Gamma$. Readers may note that our choice of compactification differs from the standard automorphic method \cite{harder} that prefers the Borel-Serre compactification over the Baily-Borel approach. 

An \textit{admissible triangle} based at $x \in \mathbb{P}^{1}(\mathbb{Q})$ refers to a geodesic triangle on $\mathbb{H}^{\ast}$ whose one vertex is at $x$ and two other vertices lie in $\mathbb{H}$. 
  
\begin{definition}
\label{definition2.6}
A triangular neighborhood of $x \in \mathbb{P}^{1}(\mathbb{Q})$ is the intersection of an open neighborhood of $x$ in $\mathbb{H}^{\ast}$ with the interior of an admissible triangle based at $x$.
\end{definition}

Observe that the action of $\gamma \in \Gamma$ maps a triangular neighborhood of $x$ onto a triangular neighborhood of $\gamma x$.  

\begin{example}
\label{example2.7} 
This example describes the triangular neighborhoods of $\infty$. Let $t_1, t_2, h \in \mathbb{R}$ so that $t_1 < t_2$ and $h > 0$. Set \[\mathcal{N}^{h}_{t_1,t_2} := \{\tau \in \mathbb{H} \mid \substack{t_1 < u(\tau) < t_2,\, v(\tau) > h} \}.\]
If $h > \frac{t_2 - t_1}{2}$ then $\mathcal{N}^{h}_{t_1,t_2}$ is a triangular neighborhood of $\infty$. In general, if $W$ is a nonempty triangular neighborhood of $\infty$ then there exists $t_1 < t_2$ and $h_0 > 0$ so that $W = \mathcal{N}_{t_1, t_2}^{h} \cup \mathcal{C}_h$ for each $h \geq h_0$ where $\mathcal{C}_h$ is a relatively compact subset of $\mathbb{H}$, i.e., the closure of $\mathcal{C}_h$ in $\mathbb{H}$ is compact.   
\end{example}

We first introduce a scale for the growth of a function in the triangular neighborhoods of $\infty$. Let $\alpha$ be a fixed real number. A function $f \in \text{Hol}(\mathbb{H})$ \textit{decays with exponent $\alpha$} at $\infty$ if $\lvert f(\tau)\rvert\lvert\tau\rvert^{\alpha}$ is bounded on each triangular neighborhood of $\infty$. Note that the definition does not require the bound to be uniform.
Write \[\mathfrak{F}_{\alpha}:= \{f \in \text{Hol}(\mathbb{H}) \mid \text{$f$ decays with exponent $\alpha$ at $\infty$}\}.\] It is clear that $\mathfrak{F}_{\alpha}$ is a $\mathbb{C}$-subspace of $\text{Hol}(\mathbb{H})$. Moreover, $\mathfrak{F}_{\alpha_2} \subseteq \mathfrak{F}_{\alpha_1}$ if $\alpha_1 \leq \alpha_2$. Set  $\mathfrak{F}_{\infty}:= \bigcap_{\alpha \in \mathbb{R}} \mathfrak{F}_{\alpha}$. 

The boundary points in $\mathbb{P}^{1}(\mathbb{Q}) -\{\infty\}$ naturally play a role in the theory due to the action of $\Gamma$ on the extended upper half plane. Let $\alpha$ be a real number and $x_0 \in \mathbb{P}^{1}(\mathbb{Q}) -\{\infty\}$. A function $f \in \text{Hol}(\mathbb{H})$ \textit{grows with exponent $\alpha$} at $x_0$ if $\lvert f(\tau)\rvert\lvert\tau - x_0\rvert^{\alpha}$ is bounded on each triangular neighborhood of $x_0$. Define \[\mathfrak{F}_{\alpha}^{x_0} := \{f \in \text{Hol}(\mathbb{H}) \mid \text{$f$ grows with exponent $\alpha$ at $x_0$}\}.\] It is clear that $\mathfrak{F}^{x_0}_{\alpha}$ is a $\mathbb{C}$-subspace of $\text{Hol}(\mathbb{H})$. Note that $\mathfrak{F}_{\alpha_1}^{x_0} \subseteq \mathfrak{F}_{\alpha_2}^{x_0}$ if $\alpha_1 \leq \alpha_2$. Put 
$\mathfrak{F}_{-\infty}^{x_0} := \bigcap_{\alpha \in \mathbb{R}} \mathfrak{F}_{\alpha}^{x_0}$. 

The Fourier expansion of a weakly holomorphic modular form at $\infty$ involves a principal part of the Laurent expansion along with a constant term (Example~\ref{example2.8}). For $n \geq 1$, let $\mathcal{J}_n := \{(\kappa_1, \cdots, \kappa_n) \in \mathbb{R}_{\geq 0}^{n} \mid \kappa_1 < \cdots < \kappa_n\}$. For $\kappa = (\kappa_1, \cdots, \kappa_n) \in \mathcal{J}_n$ define $\mathsf{EP}(\kappa)$ to be the $\mathbb{C}$-subspace of $\text{Hol}(\mathbb{H})$ spanned by $\big\{\textbf{e}(-\kappa_r \tau) \mid 1 \leq r \leq n\big\}$. The subspace $\mathsf{EP}(0)$ consists of the constant functions. Define the collection of exponential polynomials by \[\mathsf{EP} := \bigcup_{\substack{\kappa \in \mathcal{J}_n,\\n \geq 1}} \mathsf{EP}(\kappa).\] Since $\mathbb{R}_{\geq 0}$ is totally ordered $\mathsf{EP}$ is a $\mathbb{C}$-subspace of $\text{Hol}(\mathbb{H})$. Note that the polynomials in $\mathsf{EP}$ have unique holomorphic extensions to $\mathbb{C}$. For $\alpha \in (0, \infty]$, define 
\[\mathsf{EP}\mathfrak{F}_{\alpha}:= \mathsf{EP} + \mathfrak{F}_{\alpha} \subseteq \text{Hol}(\mathbb{H}).\] One easily checks that $\mathsf{EP} \cap \mathfrak{F}_{\alpha} = \{0\}$, that is to say, $\mathsf{EP}\mathfrak{F}_{\alpha} = \mathsf{EP} \oplus \mathfrak{F}_{\alpha}$. We write $f_{\infty}$, resp. $f_c$, for the $\mathsf{EP}$-component, resp. $\mathfrak{F}_{\alpha}$ component, of a function $f \in \mathsf{EP}\mathfrak{F}_{\alpha}$.    

\begin{example}
\label{example2.8}
Let $f$ be a weakly holomorphic modular form on a congruence subgroup $\mathcal{G}$ containing $\Gamma(N)$. Then $f$ admits a $q$-series expansion 
$f(\tau) = \sum_{j = -\infty}^{\infty} a(j)\textbf{e}(j\tau/N)$ 
where $\big(a(j)\big)_{j \in \mathbb{Z}}$ is a sequence of complex numbers so that $a(j) = 0$ for $j <<0$ and $\sum_{j \geq 1} a(j)z^j$ has radius of convergence $1$ on the open unit disc. If $f$ is a holomorphic modular form, then $a(n) = 0$ for each $n < 0$. An easy calculation using the Fourier expansion above verifies that $f \in \mathsf{EP}\mathfrak{F}_{\infty}$. Moreover, $f \in \mathbb{C}\mathfrak{F}_{\infty}:= \mathbb{C} \oplus \mathfrak{F}_{\infty}$ whenever $f$ is holomorphic.  
\end{example}

Next, we note down basic facts about the action of conformal automorphisms on growth families. 

\begin{lemma}
\label{lemma2.9}
Let $k \in \mathbb{Z}$, $\alpha \in (0, \infty]$, and $\gamma \in \Gamma$. 
\begin{enumerate}[label=(\roman*), align=left, leftmargin=0pt]
\item If $\gamma \in \Gamma_{\infty}$ then both $\mathsf{EP}$ and $\mathfrak{F}_{\alpha}$ are stable under $\lvert_{k,\gamma}$. Moreover, we have $(f\lvert_{k,\gamma})_{\infty} = f_{\infty}\lvert_{k,\gamma}$ and  $(f\lvert_{k,\gamma})_c = f_c\lvert_{k,\gamma}$, for each $\gamma \in \Gamma_{\infty}$ and $f \in \mathsf{EP}\mathfrak{F}_{\alpha}$.  
\item Now suppose $\gamma \in \Gamma - \Gamma_{\infty}$ and $f \in \mathfrak{F}_{\alpha}$. Then $f\lvert_{k,\gamma} \in \mathfrak{F}^{-\frac{d}{c}}_{k - \alpha}$.  
\end{enumerate}  
\end{lemma}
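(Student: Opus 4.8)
The plan is to treat the two parts separately, each by a direct unravelling of the definitions together with the explicit action $f\lvert_{k,\gamma}(\tau) = (c\tau+d)^{-k}f(\gamma\tau)$ and the behaviour of triangular neighborhoods under $\Gamma$ recorded just before Definition~\ref{definition2.6}.

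For part (i), fix $\gamma = \pm T^n \in \Gamma_\infty$, so $\gamma\tau = \tau \pm n$ and the automorphy factor is $(\pm 1)^{-k}$, a nonzero constant. The key observation is that translation preserves the family of triangular neighborhoods of $\infty$ (Example~\ref{example2.7} shows a base of such neighborhoods is the strips $\mathcal{N}^h_{t_1,t_2}$, and $T^n$ just shifts $(t_1,t_2)$), and that $\lvert \tau \pm n\rvert / \lvert\tau\rvert \to 1$ as $\lvert\tau\rvert \to \infty$ inside any fixed strip; hence $\lvert f\lvert_{k,\gamma}(\tau)\rvert\,\lvert\tau\rvert^\alpha$ is bounded on a triangular neighborhood iff $\lvert f(\tau)\rvert\,\lvert\tau\rvert^\alpha$ is, giving stability of $\mathfrak{F}_\alpha$. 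Stability of $\mathsf{EP}$ is immediate since $\mathbf{e}(-\kappa(\tau\pm n)) = \mathbf{e}(\mp \kappa n)\,\mathbf{e}(-\kappa\tau)$, so each generator of $\mathsf{EP}(\kappa)$ maps into $\mathsf{EP}(\kappa)$. The claimed compatibility $(f\lvert_{k,\gamma})_\infty = f_\infty\lvert_{k,\gamma}$, $(f\lvert_{k,\gamma})_c = f_c\lvert_{k,\gamma}$ then follows from the directness of the decomposition $\mathsf{EP}\mathfrak{F}_\alpha = \mathsf{EP}\oplus\mathfrak{F}_\alpha$: applying $\lvert_{k,\gamma}$ to $f = f_\infty + f_c$ gives a decomposition of $f\lvert_{k,\gamma}$ into an $\mathsf{EP}$-piece and an $\mathfrak{F}_\alpha$-piece, and uniqueness of the decomposition identifies the pieces.

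For part (ii), take $\gamma \in \Gamma - \Gamma_\infty$, so $c \neq 0$, and recall $\gamma\tau = \tfrac{a\tau+b}{c\tau+d}$ maps a neighborhood of $-d/c$ to a neighborhood of $\gamma(-d/c) = \infty$; equivalently, $\gamma^{-1}$ maps triangular neighborhoods of $\infty$ to triangular neighborhoods of $-d/c$. Let $W$ be a triangular neighborhood of $-d/c$; I want to bound $\lvert f\lvert_{k,\gamma}(\tau)\rvert\,\lvert\tau + d/c\rvert^{\alpha - k}$ on $W$. Write $\lvert f\lvert_{k,\gamma}(\tau)\rvert\,\lvert\tau + d/c\rvert^{\alpha-k} = \lvert c\tau+d\rvert^{-k}\,\lvert f(\gamma\tau)\rvert\,\lvert\tau+d/c\rvert^{\alpha-k}$, and use the two identities $\lvert\tau+d/c\rvert = \lvert c\tau+d\rvert/\lvert c\rvert$ and $\lvert\gamma\tau\rvert = \big\lvert \tfrac{a}{c} - \tfrac{1}{c(c\tau+d)}\big\rvert$, so that as $\tau \to -d/c$ within $W$ one has $\lvert c\tau+d\rvert \to 0$ and $\lvert\gamma\tau\rvert \sim \tfrac{1}{\lvert c\rvert\,\lvert c\tau+d\rvert}$, hence $\lvert\gamma\tau\rvert^{-\alpha} \sim (\lvert c\rvert\,\lvert c\tau+d\rvert)^{\alpha}$ up to bounded factors. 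Substituting and collecting powers of $\lvert c\tau+d\rvert$: the $f(\gamma\tau)$ factor contributes $\lvert\gamma\tau\rvert^{-\alpha}$ times a bounded quantity (because $f \in \mathfrak{F}_\alpha$ and $\gamma\tau$ ranges over a triangular neighborhood of $\infty$), the three remaining $\lvert c\tau+d\rvert$-powers are $-k$ (from the automorphy factor), $\alpha - k$ (from $\lvert\tau+d/c\rvert^{\alpha-k}$, up to the constant $\lvert c\rvert^{k-\alpha}$), and $+\alpha$ (from $\lvert\gamma\tau\rvert^{-\alpha}$), which sum to $2\alpha - 2k$ — that does not cancel, so I have mis-bookkept; the correct target is $\mathfrak{F}^{-d/c}_{k-\alpha}$, i.e. I should bound $\lvert f\lvert_{k,\gamma}(\tau)\rvert\,\lvert\tau+d/c\rvert^{k-\alpha}$, and then the powers are $-k$, $k-\alpha$, $+\alpha$, summing to $0$, so $\lvert f\lvert_{k,\gamma}(\tau)\rvert\,\lvert\tau+d/c\rvert^{k-\alpha}$ equals a bounded function times a bounded function, hence is bounded on $W$. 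One must also handle the relatively-compact part $\mathcal{C}_h$ of $W$ (Example~\ref{example2.7}) separately, where boundedness is automatic by continuity, and check the case $\alpha = \infty$ by noting it means membership in every $\mathfrak{F}_{\alpha_0}$, so the image lies in every $\mathfrak{F}^{-d/c}_{k-\alpha_0}$, i.e. in $\mathfrak{F}^{-d/c}_{-\infty}$.

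The only genuinely delicate point is keeping the exponent bookkeeping in part (ii) honest — tracking which quantities tend to $0$ versus $\infty$ as $\tau \to -d/c$, and verifying that the comparison $\lvert\gamma\tau\rvert \sim \tfrac{1}{\lvert c\rvert\,\lvert c\tau+d\rvert}$ holds uniformly enough on a triangular neighborhood (it does, because on such a neighborhood $\lvert c\tau+d\rvert$ is bounded, so the term $a/c$ is negligible relative to $\tfrac{1}{c(c\tau+d)}$ precisely in the regime $c\tau+d \to 0$, and on the complementary relatively-compact part everything is trivially bounded). Everything else is a routine consequence of the $\Gamma$-equivariance of triangular neighborhoods and the definitions of $\mathfrak{F}_\alpha$ and $\mathfrak{F}^{x_0}_\alpha$.
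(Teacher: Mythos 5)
Your proof is correct and is exactly the direct-from-definitions verification that the paper leaves implicit (its proof reads only ``Follows from the definition of the growth families''): translation-equivariance of triangular neighborhoods of $\infty$ for part (i), and the comparison $\lvert\gamma\tau\rvert\cdot\lvert c\tau+d\rvert \geq c' > 0$ near $-d/c$ together with the zero net power of $\lvert c\tau+d\rvert$ for part (ii). The momentary sign slip in the exponent bookkeeping is caught and corrected within your own argument, and the final accounting ($-k$, $k-\alpha$, $+\alpha$ summing to $0$), the relatively compact remainder, and the $\alpha=\infty$ case are all handled properly.
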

\begin{proof}
Follows from the definition of the growth families. 
\end{proof}

Before beginning the discussion about Eichler-Shimura integrals with base point at $\infty$, one needs to fix a convention regarding the admissible family of paths for path integrals having endpoint on the boundary. Let $\tau_1, \tau_2 \in \mathbb{H}^{\ast} - \{\infty\}$. A \textit{good} path between $\tau_1$ and $\tau_2$ refers to a piecewise smooth path on $\mathbb{H}^{\ast}$ that maps $(0,1)$ inside $\mathbb{H}$. The unique good path joining $\tau \in \mathbb{H}^{\ast}$ and $\infty$ is the geodesic path between $\tau$ and $\infty$. In the sequel, the path integrals are along good paths joining the endpoints.

\begin{lemma}
\label{lemma2.10}
\begin{enumerate}[label=(\roman*), align=left, leftmargin=0pt]
\item Let $\tau \in \mathbb{H}$ and $x \in \mathbb{P}^{1}(\mathbb{Q}) - \{\infty\}$. Suppose that $f \in \mathfrak{F}^{x}_{0}$, i.e., $f$ is bounded on each triangular neighborhood of $x$. Then $\int^{\tau}_{x}f(\xi)d\xi$ does not depend on the choice of a good path joining the endpoints.  

\item Now suppose $x_1, x_2 \in \mathbb{P}^{1}(\mathbb{Q}) - \{\infty\}$. Suppose that $f \in \mathfrak{F}^{x_1}_{0} \cap \mathfrak{F}^{x_2}_{0}$. Then $\int^{x_{2}}_{x_{1}}f(\xi)d\xi$ does not depend on the choice of good path.  
\end{enumerate}
\end{lemma}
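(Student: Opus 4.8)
The plan is to show that the integral $\int_{x_1}^{x_2} f(\xi)\,d\xi$ is well defined by reducing the two-endpoint statement (ii) to the one-endpoint statement (i), and to prove (i) by a standard homotopy/Cauchy argument adapted to the geometry of triangular neighborhoods. First I would fix a good path $\sigma$ from $x$ to $\tau$ and a second good path $\sigma'$ from $x$ to $\tau$; both map $(0,1)$ into $\mathbb{H}$ and have the same endpoints in $\mathbb{H}^{\ast}$. I would use the action of $\Gamma$ (which, by the remark after Definition~\ref{definition2.6}, carries triangular neighborhoods of $x$ to triangular neighborhoods of $\gamma x$, hence preserves membership in $\mathfrak{F}^{x}_{0}$ up to the corresponding change of cusp) to move $x$ to $\infty$; concretely, pick $\gamma \in \Gamma$ with $\gamma x = \infty$, so that $f\lvert_{0,\gamma^{-1}}$ is bounded near $\infty$ in every triangular neighborhood there, and the problem becomes the analogous one with base point $\infty$ and the geodesic (vertical) good path, using Example~\ref{example2.7} to describe the neighborhoods.

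The core analytic step is then the following: if $g \in \text{Hol}(\mathbb{H})$ is bounded on each triangular neighborhood of $\infty$, and $\sigma_1, \sigma_2$ are good paths from $\infty$ to a point $\tau \in \mathbb{H}$, then $\int_{\sigma_1} g = \int_{\sigma_2} g$. For a fixed height $h$, the portions of $\sigma_1$ and $\sigma_2$ lying in $\{v > h\}$ eventually enter and stay inside a common triangular neighborhood $\mathcal{N}^{h}_{t_1,t_2}$ of $\infty$ (enlarging $t_1, t_2$ if necessary), on which $g$ is bounded by some constant $M_h$. Cauchy's theorem on the simply connected region $\mathcal{N}^{h}_{t_1,t_2}$ (a convex set) lets me replace each path's tail above height $h$ by a fixed reference arc — say, first go vertically up from $\tau$ to height $h+1$ and then travel along a prescribed route — and the error made in doing so is $\int$(difference of tails)$\,g$, which is bounded by $M_h$ times the total length of those tails. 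As $h \to \infty$, the tails have length $O(1/h)$ in the hyperbolic sense but, more simply, the Euclidean length of the portion of a good path with $v(\xi) > h$ tends to $0$ once the path is inside the strip $t_1 < u < t_2$ and heading to the cusp, while $g$ stays bounded; hence the difference of the two integrals is $O(M_h/h) \to 0$, forcing equality. One must be a little careful that $\int_{\sigma_i} g$ converges in the first place: this follows from the same boundedness of $g$ near $\infty$ combined with the fact that a good path approaches $\infty$ transversally enough for the tail integral to converge — this is where boundedness on \emph{every} triangular neighborhood, not just one, does the work, since a given good path need not stay in a single strip but does eventually lie in one.

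For part (ii), with $x_1, x_2 \in \mathbb{P}^{1}(\mathbb{Q}) - \{\infty\}$ and $f \in \mathfrak{F}^{x_1}_0 \cap \mathfrak{F}^{x_2}_0$, I would pick an auxiliary interior point $\tau_0 \in \mathbb{H}$ and write any good path from $x_1$ to $x_2$ as a concatenation of a good path from $x_1$ to $\tau_0$ and a good path from $\tau_0$ to $x_2$ (every good path from $x_1$ to $x_2$ passes through $\mathbb{H}$, and by part (i) applied near each cusp the value only depends on the path up to homotopy, so splitting at a chosen interior point is harmless once one checks a good path can be deformed to pass through $\tau_0$ — which follows since $\mathbb{H}$ is connected and the deformation can be kept inside $\mathbb{H}$ on $(0,1)$). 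Then apply part (i) twice: $\int_{x_1}^{\tau_0} f$ is independent of the good path, and likewise $\int_{\tau_0}^{x_2} f$, so their sum is independent of everything.

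The main obstacle I expect is the convergence-and-homotopy bookkeeping near the cusp: a good path is only required to map $(0,1)$ into $\mathbb{H}$ and can in principle oscillate wildly in $u(\xi)$ as $v(\xi) \to \infty$, so one cannot assume it lies in a single strip $\mathcal{N}^h_{t_1,t_2}$ globally. The fix is that it \emph{does} eventually lie in such a strip (since it converges to the cusp), combined with the fact that $\mathfrak{F}^{x}_0$ encodes boundedness on \emph{all} triangular neighborhoods simultaneously; making this precise — choosing $h$ large enough that both tails are trapped in one convex triangular neighborhood, then applying Cauchy's theorem there and estimating — is the technical heart, but it is routine once set up correctly.
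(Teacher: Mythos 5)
Your reduction of (ii) to (i) by splitting a good path at an interior point of $\mathbb{H}$ is fine and is exactly what the paper does. The problem is your proof of (i): the detour through $\infty$ introduces two genuine errors. First, the change of variables $\xi=\gamma^{-1}\eta$ turns the $1$-form $f(\xi)\,d\xi$ into $f\lvert_{2,\gamma^{-1}}(\eta)\,d\eta$, not $f\lvert_{0,\gamma^{-1}}(\eta)\,d\eta$; you have dropped the Jacobian $(c'\eta+d')^{-2}$. Because of this you state your ``core analytic step'' for a function that is merely \emph{bounded} near $\infty$, and for such a function the tail integral along a path going to $\infty$ need not converge at all (take $g\equiv 1$ and the vertical geodesic: $\int^{i\infty}d\xi$ diverges). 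Your assertion that boundedness plus ``transversal approach'' yields convergence is false; it is precisely the forgotten weight-$2$ factor, which makes the transformed integrand decay like $\lvert\eta\rvert^{-2}$, that rescues convergence. Second, even granting convergence, the geometric estimates at $\infty$ fail: the image under $\gamma$ of a good path ending at $x$ is a path ending at $\infty$ which in general approaches $\infty$ in a non-vertical direction (this happens whenever the original path approaches $x$ non-vertically), so its tail is \emph{not} eventually contained in any strip $\mathcal{N}^{h}_{t_1,t_2}$, and the Euclidean length of the portion with $v(\xi)>h$ is infinite, not $o(1)$. So both the ``trapped in a common convex triangular neighborhood'' step and the length estimate collapse.

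The reason the lemma is stated only for $x\in\mathbb{P}^1(\mathbb{Q})-\{\infty\}$ (and the reason the paper decrees that the only good path to $\infty$ is the vertical geodesic) is exactly that at a \emph{finite} boundary point a piecewise smooth path on the compact parameter interval $[0,1]$ lands in a bounded region of $\mathbb{C}$, hence has bounded speed and finite Euclidean length. The intended ``easy exercise'' stays at $x$: truncate the two good paths at parameter $\epsilon$, observe that the truncated initial pieces have length tending to $0$ while the integrand stays bounded there, join the two truncation points by a short arc near $x$ inside $\mathbb{H}$, and apply Cauchy's theorem on the simply connected domain $\mathbb{H}$ to the resulting closed contour; letting $\epsilon\to 0$ gives both convergence and path-independence. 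Your conjugation to $\infty$ destroys precisely the finiteness of length that this argument runs on. (There remains a subtlety you share with the paper's one-line proof: triangular neighborhoods of a finite cusp pinch tangentially to the vertical geodesic at $x$, so membership in $\mathfrak{F}^{x}_{0}$ does not literally bound $f$ along a path approaching $x$ in an arbitrary direction; one should either restrict attention to paths whose tail lies in some triangular neighborhood or note that in all applications the integrand in fact decays at $x$. But that is a gap in the statement's bookkeeping, not something your argument addresses either.)
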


\begin{proof}
The first part is an easy exercise using piece-wise smoothness of paths. The second part follows immediately from the first part. 
\end{proof}

Let $k \geq 2$ and $\alpha \in [k, \infty]$. Suppose $f \in \mathsf{EP}\mathfrak{F}_{\alpha}$. Write $f = f_{\infty} + f_{c}$. Note that $f_c(\tau)\tau^{r} \in \mathfrak{F}_{2}$ for each $0 \leq r \leq k-2$. Put 
\begin{gather*}
\mathbb{I}^{\infty}_{k}(\tau,f,X) := \int^{\tau}_{0}f_{\infty}(\xi)(X-\xi)^{k-2}d\xi,\\ \mathbb{I}^{c}_{k}(\tau,f,X) := -\int^{\infty}_{\tau}f_{c}(\xi)(X-\xi)^{k-2}d\xi.
\end{gather*}
The integrals above give rise to well-defined elements in $\text{Hol}(\mathbb{H})[X]_{k-2}$ that satisfy  $\partial_{\tau}\big(\mathbb{I}_{k}^{\infty}(\tau,f,X)\big) = f_{\infty}(\tau)(X-\tau)^{k-2}d\tau$ and $\partial_{\tau}\big(\mathbb{I}^{c}_{k}(\tau,f,X)\big) = f_{c}(\tau)(X-\tau)^{k-2}d\tau$. Therefore 
\begin{equation}
\label{2.4}
\mathbb{I}^{*}_{k}(\tau, f, X) = \mathbb{I}^{\infty}_{k}(\tau,f,X) + \mathbb{I}^{c}_{k}(\tau,f,X)
\end{equation} 
is an Eichler-Shimura integral of $f$ in the sense of Definition~\ref{definition2.1}. We refer to $\mathbb{I}^{*}_{k}(\tau, f, X)$ as the Eichler-Shimura integral of $f$ with base point at $\infty$. This strategy to regularize the Eichler-Shimura integral resembles Brown's construction of period polynomials \cite[Part I]{brown} where he interprets $\mathbb{I}^{\infty}_{k}(\tau,f, X)$ as an integration on the tangent plane.

Let $\mathcal{G}$ be a congruence subgroup of $\Gamma$ and $\mathcal{X} \subseteq \mathsf{EP}\mathfrak{F}_{\alpha}$ be a collection of functions stable under the $\lvert_{k}$-action of $\mathcal{G}$. Suppose that $\mathsf{p}^{*}_{k}$ is the period function \eqref{2.1} determined by the choice of Eichler-Shimura integrals $\mathbb{I}^{*}_{k}$. Next, we provide an explicit formula for the period function. For convenience write \[f_{\infty}(\gamma):= (f\lvert_{\gamma})_{\infty},\hspace{.2cm} f_{c}(\gamma):= (f\lvert_{\gamma})_c.\hspace{.3cm}(\substack{f \in \mathcal{X}, \gamma \in \mathcal{G}})\]

\begin{proposition}
\label{proposition2.11}
Let $f \in \mathcal{X}$ and $\gamma \in \mathcal{G}$. Then  
\begin{equation*}
\begin{aligned}
\mathsf{p}_{k}^{*}(\gamma)(f) & = \int_{\gamma^{-1}0}^{\tau_0} f_{\infty}(\gamma^{-1})\lvert_{\gamma}(\xi)(X-\xi)^{k-2}d\xi - \int_{0}^{\tau_0} f_{\infty}(\xi)(X-\xi)^{k-2}d\xi \\
& - \int_{\tau_0}^{\gamma^{-1}\infty} f_c(\gamma^{-1})\lvert_{\gamma}(\xi)(X-\xi)^{k-2}d\xi + \int_{\tau_0}^{\infty} f_c(\xi)(X-\xi)^{k-2}d\xi  
\end{aligned} 
\end{equation*} 
for each $\tau_{0} \in \mathbb{H}$.  
\end{proposition}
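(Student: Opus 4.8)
The plan is to start from the definition of the period function $\mathsf{p}_k$ in \eqref{2.1} applied to the specific choice $\mathbb{I}_k = \mathbb{I}_k^{*}$, and then to split $\mathbb{I}_k^{*}$ into its two pieces $\mathbb{I}_k^{\infty}$ and $\mathbb{I}_k^{c}$ via \eqref{2.4} and treat each separately. Concretely, writing $\gamma = \begin{pmatrix} a & b \\ c & d\end{pmatrix}$, the quantity $\mathsf{p}_k^{*}(\gamma)(f)$ equals
\[
(cX+d)^{k-2}\Big(\mathbb{I}_k^{\infty}(\gamma\tau, f|_{\gamma^{-1}}, \tfrac{aX+b}{cX+d}) + \mathbb{I}_k^{c}(\gamma\tau, f|_{\gamma^{-1}}, \tfrac{aX+b}{cX+d})\Big) - \mathbb{I}_k^{\infty}(\tau, f, X) - \mathbb{I}_k^{c}(\tau, f, X),
\]
and the crucial point is that $(\,f|_{\gamma^{-1}})_{\infty} = f_{\infty}(\gamma^{-1})$ and $(\,f|_{\gamma^{-1}})_{c} = f_{c}(\gamma^{-1})$ by the definitions just made, so $\mathbb{I}_k^{\infty}(\cdot, f|_{\gamma^{-1}}, \cdot)$ is literally the integral of $f_{\infty}(\gamma^{-1})$ from $0$ and $\mathbb{I}_k^{c}(\cdot, f|_{\gamma^{-1}}, \cdot)$ is the integral of $f_{c}(\gamma^{-1})$ to $\infty$.

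Next I would apply the change-of-variables formula to each of the two ``transported'' terms, exactly as in the computation recorded just after \eqref{2.1} for $\mathsf{p}_k^{\tau_0}$. The identity underlying that computation is: for $g \in \mathrm{Hol}(\mathbb{H})$ and any path,
\[
(cX+d)^{k-2}\int_{\gamma^{-1}\sigma_1}^{\gamma^{-1}\sigma_2} g|_{\gamma}(\xi)(X-\xi)^{k-2}\,d\xi = \int_{\sigma_1}^{\sigma_2} g(\eta)\Big(\tfrac{aX+b}{cX+d}-\eta\Big)^{k-2}\,d\eta \cdot (cX+d)^{k-2},
\]
so substituting $\eta = \gamma\xi$ turns $(cX+d)^{k-2}\mathbb{I}_k^{\infty}(\gamma\tau, f|_{\gamma^{-1}}, \tfrac{aX+b}{cX+d})$ into $\int_{\gamma^{-1}0}^{\tau} f_{\infty}(\gamma^{-1})|_{\gamma}(\xi)(X-\xi)^{k-2}\,d\xi$ and similarly for the $c$-part, producing $-\int_{\tau}^{\gamma^{-1}\infty} f_c(\gamma^{-1})|_{\gamma}(\xi)(X-\xi)^{k-2}\,d\xi$. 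One must be a little careful that these integrals, now having an endpoint on $\mathbb{P}^1(\mathbb{Q})$, are well-defined and path-independent: Lemma~\ref{lemma2.10} together with the growth estimates of Lemma~\ref{lemma2.9}(ii) (which tell us $f_c|_{\gamma}$ lies in an appropriate $\mathfrak{F}^{-d/c}_{\bullet}$, and that $f_{\infty}(\gamma^{-1})|_{\gamma}$ times $(X-\xi)^{k-2}$ is integrable near $\gamma^{-1}0$) handle this.

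Finally I would insert an arbitrary $\tau_0 \in \mathbb{H}$ as an intermediate point in each of the four resulting integrals, splitting e.g. $\int_{\gamma^{-1}0}^{\tau} = \int_{\gamma^{-1}0}^{\tau_0} + \int_{\tau_0}^{\tau}$; the ``moving'' pieces $\int_{\tau_0}^{\tau}$ from the $\infty$-part and the $c$-part recombine into $\int_{\tau_0}^{\tau} f_{\infty}(\gamma^{-1})|_{\gamma}(\xi)(X-\xi)^{k-2} d\xi + \int_{\tau_0}^{\tau} f_c(\gamma^{-1})|_{\gamma}(\xi)(X-\xi)^{k-2} d\xi = \int_{\tau_0}^{\tau}(f|_{\gamma^{-1}})|_{\gamma}(\xi)(\cdots)d\xi = \int_{\tau_0}^{\tau} f(\xi)(X-\xi)^{k-2}d\xi$, which cancels exactly against the corresponding $\tau$-dependent contributions coming from $-\mathbb{I}_k^{\infty}(\tau,f,X) - \mathbb{I}_k^{c}(\tau,f,X) = -\int_{\tau_0}^{\tau} f(\xi)(X-\xi)^{k-2}d\xi + (\text{terms }\int_0^{\tau_0}, \int_{\tau_0}^{\infty})$. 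What remains is precisely the four-term expression in the statement. The main obstacle I anticipate is bookkeeping rather than conceptual: keeping the orientation of paths and the signs on the $\mathbb{I}_k^{c}$ terms straight through the change of variables (recall $\gamma^{-1}$ need not fix $\infty$, so $\gamma^{-1}\infty = a/c \in \mathbb{P}^1(\mathbb{Q})$, and one should double-check that $\gamma\tau$ replaced by $\tau$ is correct after the substitution), and verifying at each stage that the boundary endpoints fall in the growth classes for which Lemma~\ref{lemma2.10} applies.
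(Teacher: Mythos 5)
Your argument is correct and follows essentially the same route as the paper: apply the definition \eqref{2.1} to $\mathbb{I}_k^{*} = \mathbb{I}_k^{\infty} + \mathbb{I}_k^{c}$, change variables $\xi \mapsto \gamma\xi$ in the two transported integrals (with Lemma~\ref{lemma2.9} and Lemma~\ref{lemma2.10} controlling convergence and path-independence at the boundary endpoints), and then bring in $\tau_0$; the only difference is that the paper evaluates at $\tau = \tau_0$ from the outset, which is legitimate because Lemma~\ref{lemma2.4} makes the cocycle value independent of $\tau$, whereas you keep $\tau$ generic and cancel the $\tau$-dependent pieces by hand. One small slip: your displayed change-of-variables identity carries the factor $(cX+d)^{k-2}$ on both sides, so as written it equates the two integrals without the automorphy factor; the correct version, which is the one you actually use in the next step, has $(cX+d)^{k-2}$ multiplying only the integral in the variable $\eta$.
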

\begin{proof}
Let $\tau_0 \in \mathbb{H}$. One uses \eqref{2.1} with $\tau = \tau_{0}$ to conclude that   
\begin{equation*}
\begin{aligned}
\mathsf{p}_{k}^{*}(\gamma)(f) & = (cX+d)^{k-2} \int^{\gamma\tau_0}_{0} f_{\infty}(\gamma^{-1})(\xi)(\frac{aX+b}{cX+d} - \xi)^{k-2} d\xi - \mathbb{I}^{\infty}_k(\tau_0,f,X)\\
& - (cX+d)^{k-2}\int_{\gamma\tau_0}^{\infty} f_c(\gamma^{-1})(\xi)(\frac{aX+b}{cX+d}-\xi)^{k-2}d\xi - \mathbb{I}^{c}_k(\tau_0,f,X). 
\end{aligned}
\end{equation*}
To prove the proposition, one must show that the change of variable $\xi \to \gamma\xi$ transforms the first, resp. third, term in the RHS above into the first, resp. third, term in RHS of the identity in the assertion. We take the path of integration in the first integral of the RHS to be the geodesic arc joining $0$ and $\gamma\tau_0$ so that the transformed path is good if $\gamma^{-1}0 = \infty$. Now, a routine computation using Lemma~\ref{lemma2.9} verifies that the change of variable goes through without any problem.   
\end{proof}  

The theory developed in this section also offers a cohomological vista of the recent developments on weakly holomorphic modular forms \cite{mockmodular}. For instance, one can use the proposition above to compute the parabolic cocycle attached to weakly holomorphic modular forms.
  
\begin{corollary}
\label{corollary2.12}
Let $\gamma \in \mathcal{G}_{\infty}$ and $f \in \mathcal{X}$. We have \[\mathsf{p}_{k}^{*}(\gamma)(f) = \int_{\gamma^{-1}0}^{0}f_{\infty}(\xi)(X-\xi)^{k-2}d\xi.\] In particular, if $f_{\infty}$ is constant, then $\mathsf{p}_{k}^{*}(\gamma)(f)  = \frac{f_{\infty}}{k-1}\big((X+r)^{k-1}-X^{k-1}\big)$ where $\gamma^{-1}0 = -r$. 
\end{corollary}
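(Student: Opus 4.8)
The plan is to specialize Proposition~\ref{proposition2.11} to the case $\gamma \in \mathcal{G}_{\infty}$ and simplify term by term. First I would observe that since $\gamma$ stabilizes $\infty$, we have $\gamma \in \Gamma_{\infty}$, so $\gamma = \pm T^{n}$ for some $n \in \mathbb{Z}$; in particular $\gamma^{-1}\infty = \infty$. This kills the third integral in the Proposition outright, because its lower and upper endpoints both become $\infty$ once we write it as $\int_{\tau_0}^{\gamma^{-1}\infty}$. By Lemma~\ref{lemma2.9}(i), $\Gamma_{\infty}$ preserves both $\mathsf{EP}$ and $\mathfrak{F}_{\alpha}$ and is compatible with the decomposition $f = f_{\infty} + f_c$, so $f_c(\gamma^{-1})|_{\gamma} = f_c$; hence the fourth integral of the Proposition cancels against the (now vanishing) third one, or more precisely the third and fourth integrals together contribute $\int_{\tau_0}^{\infty} f_c(\xi)(X-\xi)^{k-2}d\xi - \int_{\tau_0}^{\infty} f_c(\xi)(X-\xi)^{k-2}d\xi = 0$, where the third integral's path is the geodesic from $\tau_0$ to $\gamma^{-1}\infty = \infty$, which is the same good path used for the fourth. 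That leaves only the first two integrals.

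Next I would treat the surviving two terms. Again by Lemma~\ref{lemma2.9}(i), $f_{\infty}(\gamma^{-1})|_{\gamma} = f_{\infty}$, so the first two integrals combine into $\int_{\gamma^{-1}0}^{\tau_0} f_{\infty}(\xi)(X-\xi)^{k-2}d\xi - \int_{0}^{\tau_0} f_{\infty}(\xi)(X-\xi)^{k-2}d\xi$. Since $f_{\infty} \in \mathsf{EP}$ has a holomorphic extension to all of $\mathbb{C}$ and $(X-\xi)^{k-2}$ is entire in $\xi$, the integrand is entire, so I can concatenate paths: this difference equals $\int_{\gamma^{-1}0}^{0} f_{\infty}(\xi)(X-\xi)^{k-2}d\xi$, independent of the intermediate point $\tau_0$. (One should note $\gamma^{-1}0$ is a rational number since $\gamma = \pm T^{n}$ gives $\gamma^{-1}0 = -n$ or $n$; in either case it is finite, and the good path from $\gamma^{-1}0$ to $0$ is an ordinary segment in $\overline{\mathbb{H}}$ along which the entire integrand is integrated without issue.) This establishes the first displayed formula.

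For the second assertion, suppose $f_{\infty}$ is a constant $c_0 \in \mathbb{C}$ and write $\gamma^{-1}0 = -r$. Then $\mathsf{p}_k^{*}(\gamma)(f) = c_0 \int_{-r}^{0}(X-\xi)^{k-2}d\xi$, and a direct antiderivative computation gives $\int_{-r}^{0}(X-\xi)^{k-2}d\xi = \left[-\frac{(X-\xi)^{k-1}}{k-1}\right]_{\xi=-r}^{\xi=0} = \frac{(X+r)^{k-1} - X^{k-1}}{k-1}$, which matches the claimed expression with $f_{\infty} = c_0$ playing the role written as $f_{\infty}$ in the statement.

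I expect no serious obstacle here; the only point requiring a little care is the bookkeeping of paths — making sure that when $\gamma^{-1}\infty = \infty$ the relevant "integral from $\tau_0$ to $\gamma^{-1}\infty$" is literally the degenerate integral along the geodesic to $\infty$ and that Proposition~\ref{proposition2.11}'s path conventions are being honored — and the verification that the $\mathsf{EP}$-integrand is entire so that path-concatenation is legitimate. Both are immediate from Lemma~\ref{lemma2.9}, Lemma~\ref{lemma2.10}, and the remark (in the definition of $\mathsf{EP}$) that exponential polynomials extend holomorphically to $\mathbb{C}$.
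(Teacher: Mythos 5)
Your proof is correct and follows exactly the route the paper intends: the paper's own proof of Corollary~\ref{corollary2.12} is the one-line remark that it ``follows from the formula in Proposition~\ref{proposition2.11},'' and your specialization (killing the third and fourth terms via $\gamma^{-1}\infty=\infty$ and Lemma~\ref{lemma2.9}(i), then concatenating the two $f_{\infty}$-integrals using entirety of exponential polynomials) is precisely the intended verification. The only micro-quibble is that $\gamma^{-1}0=-n$ for both $\gamma=T^{n}$ and $\gamma=-T^{n}$ (fractional linear transformations factor through $\pm\mathrm{Id}$), but this does not affect anything.
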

 
\begin{proof}
Follows from the formula in Proposition~\ref{proposition2.11}. 
\end{proof}

\subsection{The induced picture}
\label{section2.3}
We begin with a convenient description of Shapiro's lemma \cite[p.62]{neukirch et al}. Let $\mathcal{G}_1$ be an abstract group and $\mathcal{G}_2$ be a finite index subgroup of $\mathcal{G}_1$. Suppose that $V$ is a right $\mathbb{K}[\mathcal{G}_2]$-module where $\mathbb{K}$ is a field of characteristics zero. We consider $\mathbb{K}[\mathcal{G}_1]$ as a right $\mathbb{K}[\mathcal{G}_2]$ module and set  $\text{Ind}^{\mathcal{G}_1}_{\mathcal{G}_2}(V) = \text{Hom}_{\mathbb{K}[\mathcal{G}_2]}(\mathbb{K}[\mathcal{G}_1], V)$. The induced module carries a right $\mathcal{G}_1$-action defined by $(f\lvert_{g_1})(x) = f(g_1x)$.  Shapiro's lemma states that the canonical homomorphism of $\mathcal{G}_2$-modules $\text{Ind}^{\mathcal{G}_1}_{\mathcal{G}_2}(V) \to V$ described by $f \mapsto f(\text{Id})$  
gives rise to a $\mathbb{K}$-linear isomorphism \[H^{\ast}\big(\mathcal{G}_1, \text{Ind}^{\mathcal{G}_1}_{\mathcal{G}_2}(V)\big) \cong H^{\ast}(\mathcal{G}_2, V).\] If the $\mathbb{K}[\mathcal{G}_2]$-module structure on $V$ extends to a $\mathbb{K}[\mathcal{G}_1]$-structure, then \[\text{Ind}^{\mathcal{G}_1}_{\mathcal{G}_2}(V) \cong \text{Fun}(\mathcal{G}_2\backslash \mathcal{G}_1, V)\] as $\mathbb{K}[\mathcal{G}_1]$-module via the assignment $f \to F$ where $F(\mathcal{G}_2g_1) := f(g_1^{-1})g_1$.

Let $\mathcal{G}$ be a finite index subgroup of $\Gamma$. Suppose that $V$ is a right $\mathbb{K}[\Gamma]$-module. We consider the isomorphism $\text{sh}^{1}: H^{1}\big(\Gamma, \text{Fun}(\mathcal{G}\backslash \Gamma, V)\big) \xrightarrow{\cong} H^{1}(\mathcal{G}, V)$ described by 
\begin{equation}
\label{2.5}
[P] \to [\psi(P)]; \hspace{.3cm} \psi(P)(\gamma) := P(\gamma)(\mathcal{G}\text{Id}).\hspace{.3cm}(\substack{\gamma \in \mathcal{G}})
\end{equation}
In practice, working with the cocycles on $\Gamma$ is more convenient since each cocycle is determined by its value on $\{T, S\}$. One pulls back the action of the double coset operators on $H^{1}(\mathcal{G}, V)$ to obtain an action of the Hecke algebra attached to $\mathcal{G}$ on $H^{1}(\Gamma, \text{Fun}(\mathcal{G}\backslash \Gamma, V))$; cf. \cite[5]{pasol-popa}. Our next job is to describe the Eichler-Shimura morphism in the induced picture. There are explicit constructions of the inverse of the Shapiro isomorphism at the level of cocycles \cite[3]{paskunas-quast}. We, however, directly construct the induced Eichler-Shimura cocycle using the equivariant period function. Let $\mathcal{G}$ be a congruence subgroup of $\Gamma$ and $\mathcal{X}$ be a collection of $\mathcal{G}$-invariant holomorphic functions. Write $\mathcal{X}_{\Gamma} = \{f\lvert_{\gamma} \mid f \in \mathcal{X}, \gamma \in \Gamma\}$. In other words, $\mathcal{X}_{\Gamma}$ is the smallest $\Gamma$-stable subset of $\text{Hol}(\mathbb{H})$ containing $\mathcal{X}$. Let $\mathbb{I}_{k}$ be a choice of Eichler-Shimura integrals for the $\Gamma$-stable subset $\mathcal{X}_{\Gamma}$ and $\mathsf{p}_k$ be the equivariant period function of $\mathbb{I}_k$. Then, the induced Eichler-Shimura cocycle map attached to $\mathbb{I}_{k}$ is defined by
\begin{equation}
\label{2.6}
\begin{aligned}
&\text{Ind-ES}_{k} : \mathcal{X} \to Z^{1}\big(\Gamma, \text{Fun}(\mathcal{G} \backslash \Gamma, V_{k-2,\mathbb{C}})\big),\\ 
& \hspace{.3cm}\text{Ind-ES}_k(f)(\gamma)(\mathcal{G}\sigma) = \mathsf{p}_k(\gamma)(f\lvert_{\sigma}). \hspace{.3cm}(\substack{\gamma \in \mathcal{G}, \sigma \in \Gamma})
\end{aligned}
\end{equation}
It is clear that $\psi \circ \text{Ind-ES}_k = \text{ES}_k$ where $\text{ES}_k$ is the cocycle map attached to $(\mathcal{G}, \mathcal{X})$ for the choice of Eichler-Shimura integrals determined by $\mathbb{I}_{k}$. Define the induced Eichler-Shimura morphism for the pair $(\mathcal{G}, \mathcal{X})$ by   
\[\text{Ind-}[\text{ES}_k]: \mathcal{X} \to H^{1}\big(\Gamma, \text{Fun}(\mathcal{G} \backslash \Gamma, V_{k-2,\mathbb{C}})\big), \hspace{.3cm} f \mapsto [\text{Ind-ES}_{k}(f)].\]
The comparison between the induced and non-induced cocycle maps above at once shows that 
$\text{sh}^{1} \circ \text{Ind-}[\text{ES}_k] = [\text{ES}_{k}]$. In particular, $\text{Ind-}[\text{ES}_k]$ does not depend on the choice of cocycle map. Suppose that $\mathcal{X}$ consists of weight $k$ weakly holomorphic modular forms on $\mathcal{G}$. Then $\mathcal{X}_{\Gamma}$ is contained in the subspace of weakly holomorphic modular forms on $\Gamma(N)$ for a principal level $\Gamma(N)$ contained in $\mathcal{G}$. It follows that $\mathcal{X}_{\Gamma} \subseteq \mathsf{EP}\mathfrak{F}_{\infty}$. Thus, one can choose $\mathbb{I}_{k}$ to be $\mathbb{I}_{k}^{\ast}$, i.e., the choice of Eichler-Shimura integrals determined by the base point at $\infty$. The induced cocycle map attached to the base point at infinity can be described using Fourier coefficients and $L$-values. We explicitly write down this procedure for holomorphic modular forms. 

Let $\mathcal{G}$ be a congruence subgroup containing $\Gamma(N)$. For $f \in \mathcal{M}_k(\mathcal{G})$ define the completed $L$-function of $f$ by 
\begin{equation}
\label{2.7}
L^{*}(f,s) = (2\pi)^{-s}\Gamma(s)L(f,s) := \int^{\infty}_{0}\big(f(it) - f_{\infty}\big)t^{s-1}dt
\end{equation} 
where $f_{\infty}$ is the constant term of the Fourier expansion as in Section~\ref{section2.2}. The integral defining $L^{*}(f,s)$ is absolutely convergent in the region $\text{Re}(s) > k$. Moreover, we have the following identity involving entire integrals: 
\begin{equation*}
	\begin{aligned}
		&L^{*}(f,s)  = \int_{1}^{\infty} \big(f(it) - f_{\infty}\big)t^{s-1}dt \hspace{2cm}(\substack{\text{Re}(s)> k})\\
		&+ \int_{0}^{1} \big(f(it) - (f\lvert_{S^{-1}})_{\infty}(it)^{-k}\big)t^{s-1}dt + i^{-k}\frac{(f\lvert_{S^{-1}})_{\infty}}{s-k} - \frac{f_{\infty}}{s}. 
	\end{aligned}	
\end{equation*}  
In particular, $L^{*}(f,s)$ extends to a meromorphic function on the whole complex plane with at most two simple poles supported at $\{0,k\}$. It also satisfies a functional equation $L^{\ast}(f,s) = i^k L^{\ast}(f\lvert_{S}, k-s)$. 

Suppose that $\mathcal{X}$ is a subset of $\mathcal{M}_{k}(\mathcal{G})$. Then $\mathcal{X}_{\Gamma} \subseteq \mathcal{M}_k\big(\Gamma(N)\big)$. In particular $\mathcal{X}_{\Gamma}$ is a subset of $\mathbb{C}\mathfrak{F}_{\infty}$. The following lemma provides an explicit formula for the induced cocycle map \eqref{2.6}, denoted $\text{Ind-}\text{ES}^{\ast}_k$, attached to $\mathbb{I}_{k}^{\ast}$.    

\begin{proposition}
\label{proposition2.13}\emph{\cite[8]{pasol-popa}}   
Let $f \in \mathcal{X}$ and $\sigma \in \Gamma$. Then 
\begin{align*}
\mathsf{p}_k^{*}(T) (f\lvert_{\sigma})  & = (f\lvert_{\sigma})_{\infty} \frac{(X+1)^{k-1} - X^{k-1}}{k-1},\\
\mathsf{p}_k^{*}(S) (f\lvert_{\sigma}) & = \sum_{r=0}^{k-2}i^{1-r}\binom{k-2}{r}L^{*}(f\lvert_{\sigma}, r+1)X^{k-2-r}. 
\end{align*}

\end{proposition}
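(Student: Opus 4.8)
The plan is to read off the value at $T$ from Corollary~\ref{corollary2.12} and the value at $S$ from Proposition~\ref{proposition2.11} followed by a Mellin-transform identification. Write $g := f\lvert_\sigma$. Since $\Gamma(N)$ is normal in $\Gamma$, $g$ lies in $\mathcal{M}_k(\Gamma(N))$, so by Example~\ref{example2.8} $g \in \mathbb{C}\mathfrak{F}_\infty$; in particular $g_\infty$ and $(g\lvert_{S^{-1}})_\infty$ are genuine constants, which I denote $a$ and $b$. For the first formula, note $T \in \Gamma_\infty$ with $T^{-1}\cdot 0 = -1$, so Corollary~\ref{corollary2.12} (with $r = 1$) immediately gives $\mathsf{p}_k^*(T)(g) = \tfrac{a}{k-1}\big((X+1)^{k-1} - X^{k-1}\big)$, which is the first identity.

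For the second formula I would apply Proposition~\ref{proposition2.11} with $\gamma = S$ and base point $\tau_0 = i$, noting $S^{-1}\cdot 0 = \infty$ and $S^{-1}\cdot\infty = 0$. The twisted terms simplify via two elementary observations: a constant function $c$ satisfies $c\lvert_{k,S}(\xi) = c\,\xi^{-k}$, so $g_\infty(S^{-1})\lvert_S(\xi) = b\,\xi^{-k}$; and $(g\lvert_{S^{-1}})\lvert_S = g$, so $g_c(S^{-1})\lvert_S(\xi) = g(\xi) - b\,\xi^{-k}$, which by Lemma~\ref{lemma2.9}(ii) lies in $\mathfrak{F}^0_{-\infty}$ and hence decays rapidly as $\xi \to 0$. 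The four integrals of Proposition~\ref{proposition2.11} then run along subarcs of the positive imaginary axis and converge individually, and telescoping them produces
\[
\mathsf{p}_k^*(S)(g) \;=\; \int_0^{i\infty}\Big[\big(g(\xi) - a\big) - b\,\xi^{-k}\Big](X-\xi)^{k-2}\,d\xi,
\]
a convergent integral along $\{it : t > 0\}$: the two summands diverge separately at the cusp $0$ but their sum stays bounded there (because $g(\xi) - b\xi^{-k} \to 0$), and at $\infty$ both $g_c$ and $\xi^{-k}(X-\xi)^{k-2} = O(\xi^{-2})$ are integrable.

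To finish, substitute $\xi = it$ and expand $(X - it)^{k-2}$ by the binomial theorem; the displayed integral becomes
\[
\sum_{r=0}^{k-2}\binom{k-2}{r}\,i(-i)^r\,X^{k-2-r}\int_0^\infty\!\big(g(it) - a - b\,i^{-k}t^{-k}\big)\,t^{r}\,dt .
\]
Rewriting $\tfrac{b\,i^{-k}}{s-k} = -b\,i^{-k}\int_1^\infty t^{s-k-1}\,dt$ and $\tfrac{a}{s} = a\int_0^1 t^{s-1}\,dt$ inside the alternative formula for $L^*(g,s)$ from Section~\ref{section2.3} shows that $\int_0^\infty\big(g(it) - a - b\,i^{-k}t^{-k}\big)t^{s-1}\,dt = L^*(g,s)$ throughout the strip $0 < \text{Re}(s) < k$, which contains each $s = r+1$ for $0 \le r \le k-2$ and avoids the possible poles of $L^*$ at $0$ and $k$. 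Since $i(-i)^r = i^{1-r}$, this yields $\mathsf{p}_k^*(S)(g) = \sum_{r=0}^{k-2} i^{1-r}\binom{k-2}{r}L^*(g,r+1)X^{k-2-r}$, as claimed.

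The delicate step is the regularization in the penultimate display and its compatibility with $L^*$: one must check that the recombined integral over $(0,i\infty)$ genuinely converges and that it represents the meromorphic continuation of $L^*$ at precisely the arguments $s = r+1$ — equivalently, that the polynomial-type contributions of the constants $a$ and $b$ are completely absorbed into the pole terms of $L^*$ recorded in the alternative formula. Carefully tracking the weight-$k$ slash action and the good-path conventions through the twisted terms of Proposition~\ref{proposition2.11} also requires attention, but involves no new ideas.
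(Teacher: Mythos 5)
Your proof is correct and follows essentially the same route as the paper: the value at $T$ via Corollary~\ref{corollary2.12}, and the value at $S$ via Proposition~\ref{proposition2.11} with $\tau_0=i$ combined with the shifted-integral formula for $L^{*}$ displayed below \eqref{2.7}. The only difference is organizational --- you merge the four integrals into a single regularized Mellin integral over $(0,i\infty)$ before identifying it with $L^{*}(g,s)$ on the strip $0<\mathrm{Re}(s)<k$, whereas the paper evaluates the two $f_c$-integrals first and lets the elementary $f_\infty$-integrals cancel the resulting correction terms; both computations are the same up to bookkeeping.
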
 
\begin{proof}
The first identity is a direct consequence of Corollary~\ref{corollary2.12}. We appeal to the formula given in Proposition~\ref{proposition2.11} to prove the second identity. In our context $\tau_{0} = i$ and $\gamma = S$. For brevity write $F = f\lvert_{\sigma}$. Here $F_{c}(S^{-1})\lvert_S (\tau) = F(\tau) - \tau^{-k}(F\lvert_{S^{-1}})_{\infty}$. A simple calculation using the identity below \eqref{2.7} yields 
\begin{equation*}
\begin{aligned}
& \int_{i}^{\infty} F_c(\xi)(X-\xi)^{k-2}d\xi - \int_{i}^{0} F_c(S^{-1})\lvert_S(\xi)(X-\xi)^{k-2}d\xi\\
& = \sum_{r=0}^{k-2} i^{1-r} \binom{k-2}{r} \big(L^{*}(F, r+1) + i^{-k} \frac{(F\lvert_{S^{-1}})_{\infty}}{k-r-1} + \frac{F_{\infty}}{r+1}\big)X^{k-2-r}. 
\end{aligned}
\end{equation*} 
The proof of the second formula is now clear. 
\end{proof}

Let the notation be as in the discussion above the proposition. Suppose that $V_{\infty, \mathbb{C}}$ is the field of rational functions $\mathbb{C}(X)$. We endow $V_{\infty, \mathbb{C}}$ with a right $\Gamma$-module structure by $P(X)\lvert_{\gamma} = (cX+d)^{k-2}P(\frac{aX+b}{cX+d})$
so that the inclusion $V_{k-2, \mathbb{C}} \hookrightarrow V_{\infty, \mathbb{C}}$ is a homomorphism of $\mathbb{C}[\Gamma]$-modules. Thus, one can consider $\text{Ind-}\text{ES}^{\ast}_k$ as a map from $\mathcal{X}$ to $Z^{1}\big(\Gamma, \text{Fun}(\mathcal{G}\backslash \Gamma, V_{\infty, \mathbb{C}})\big)$. Define $\text{Ind-}\widehat{\text{ES}}_{k} : \mathcal{X} \to Z^{1}\big(\Gamma, \text{Fun}(\mathcal{G} \backslash \Gamma, V_{\infty,\mathbb{C}})\big)$ as
\begin{equation*}
\text{Ind-}\widehat{\text{ES}}_k(f)(\gamma) = \text{Ind-}\text{ES}^{\ast}_k(f)(\gamma) + p_{0}(f)\lvert_{\gamma} - p_0(f)
\end{equation*}
where $p_0(f) \in \text{Fun}(\mathcal{G} \backslash \Gamma, V_{\infty,\mathbb{C}})$ is given by $p_0(f)(\mathcal{G}\sigma) := - \frac{(f\lvert_{\sigma})_{\infty}X^{k-1}}{k-1}$. Then $\text{Ind-}\widehat{\text{ES}}_{k}(f)(T)$ vanishes for each $f$ while $\text{Ind-}\widehat{\text{ES}}_{k}(f)(S)$ agrees with the definition of the period of $f$ in the work of Pasol and Popa cited above. Note that $\text{Ind-}\widehat{\text{ES}}_{k}$ and $\text{Ind-}\text{ES}^{\ast}_{k}$ induce the same map at the level of cohomology with the coefficients in $\text{Fun}(\mathcal{G} \backslash \Gamma, V_{\infty,\mathbb{C}})$. Our work provides a counterpart of their construction with coefficients in $\text{Fun}(\mathcal{G} \backslash \Gamma, V_{k-2,\mathbb{C}})$ and verifies that at the level of cohomology it induces the same map as \eqref{1.2} modulo the Shapiro isomorphism. 

\section{Eisenstein series}
\label{section3}
This section reviews the spectral Eisenstein series on a general congruence subgroup following the author's recent account of the theory \cite{myself}. We also introduce the geometric family of Eisenstein series central to our work and explain how to derive the Fourier expansion of the Maass Eisenstein series as a power series using contour integration.   

\subsection{Spectral Eisenstein series}
\label{section3.1}
Let $k$ be an integer $\geq 2$ and $\mathcal{G}$ be a congruence subgroup of $\Gamma$. Suppose that $x \in \mathbb{P}^{1}(\mathbb{Q})$ and $\sigma_x \in \Gamma$ is a scaling matrix for $x$, i.e., $x = \sigma_x \infty$. If $k$ is odd, then additionally assume that $-\text{Id} \notin \mathcal{G}$ and $x$ is a regular cusp. The spectral Eisenstein series attached to the cusp $x$ and the scaling matrix $\sigma_x$ is given by 
\begin{equation}
\label{3.1}
E_{k,x}(\tau;s) := \sum_{\gamma \in \mathcal{G}_x\backslash\mathcal{G}} \mathbf{j}(\sigma_x^{-1}\gamma, \tau)^{-k}v(\sigma_x^{-1}\gamma\tau)^{s} \hspace{.3cm}\substack{(\text{Re}(k+2s) > 2)}
\end{equation}
where $\mathbf{j}(\gamma, \tau) = c\tau + d$ for $\gamma = \begin{pmatrix}
	a & b\\
	c & d
\end{pmatrix} \in \Gamma$. If $\sigma_x$ and $\sigma_x'$ are two different scaling matrices for $x$, then the series attached to $\sigma_x$ and $\sigma_x'$ differ by a sign factor of the form $(\pm 1)^k$. In particular, if $k$ is odd, then the series above may depend on the choice of scaling matrix. The series in RHS of \eqref{3.1} admits analytic continuation to the larger domain $\text{Re}(k+2s) > 1$, allowing us to specialize the parameters at $(k,s) = (2,0)$. For simplicity we abbreviate $E_{k,x}(\tau;0)$ as $E_{k,x}(\tau)$. An explicit calculation with the Fourier series shows 
\[E_{k,x}(\tau) \in \begin{cases}
	\text{Hol}(\mathbb{H}), & \text{$k \geq 3$,}\\
	\frac{\alpha_x}{\pi v(\tau)} + \text{Hol}(\mathbb{H}), & \text{$k = 2$}
\end{cases}\]
where $\alpha_x$ is a nonzero rational number \cite[p.16]{myself}. Moreover, we have 
\begin{equation}
\label{3.2}
\pi_{\infty}(E_{k,x} \lvert_{\gamma}) := \lim_{v \to \infty} E_{k,x}(iv) = \begin{cases}
		(\pm 1)^k, & \text{if $\gamma \infty = x$ in $\mathcal{C}(\mathcal{G})$;}\\
		0, & \text{otherwise.} 
	\end{cases} \hspace{.3cm}(\substack{\gamma \in \Gamma})
\end{equation} 
Set 
\begin{equation*}
	\mathcal{B}_{\mathcal{G}, k} =
	\begin{cases}
		\{E_{k,x} \mid x \in \mathcal{C}(\mathcal{G})\}, & \substack{\text{if $k$ is even and $\geq 4$;}}\\
		\{E_{k,x} \mid x \in \mathcal{C}_{\infty}(\mathcal{G})\}, & \substack{\text{if $k$ is odd and $-\text{Id} \notin \mathcal{G}$;}}\\
		\{E_{k,x} - \frac{\alpha_x} {\alpha_{\infty}} E_{k,\infty}\mid x \in \mathcal{C}(\mathcal{G}) - \{\infty\}\}, & \substack{\text{if $k=2$;}}\\
		\emptyset, & \substack{\text{otherwise}} 
	\end{cases}
\end{equation*}
where in the second case, we write the series for a fixed choice of scaling matrix for each regular cusp. Then $\mathcal{B}_{\mathcal{G}, k}$ is a basis for the space of Eisenstein series $\mathcal{E}_{k}(\mathcal{G})$ \cite[4.2]{myself}. For a subfield $\mathbb{K}$ of $\mathbb{C}$ we write $\mathcal{E}_{k}(\mathcal{G}, \mathbb{K})$ for the $\mathbb{K}$ span of $\mathcal{B}_{\mathcal{G},k}$. Recall that if $\Gamma(N)$ is a prinicipal level contained in $\mathcal{G}$ then $\mathcal{E}_{k}(\mathcal{G}) = \mathcal{E}_k\big(\Gamma(N)\big)^{\mathcal{G}}$. One of the key features of our treatment in \cite{myself} is that we explicitly express each spectral Eisenstein series on $\mathcal{G}$ as a sum of the spectral Eisenstein series on $\Gamma(N)$. As a consequence, the $\mathbb{K}$-structure on the space of Eisenstein series is compatible with restriction to a principal level. In more detail, if $\Gamma(N) \subseteq \mathcal{G}$ then $\mathcal{E}_k(\mathcal{G}, \mathbb{K}) = \mathcal{E}_k\big(\Gamma(N), \mathbb{K}\big)^{\mathcal{G}}$. 
To write down uniform statements for all weights $\geq 2$, we introduce an \textit{extended} space of Eisenstein series 
\[\mathcal{E}^{\ast}_k(\mathcal{G}, \mathbb{K}) = \begin{cases}
	\text{$\mathbb{K}$-span of $\{E_{k,x} \mid x \in \mathcal{C}(\mathcal{G})\}$,} & \text{if $k = 2$;}\\
	\mathcal{E}_k(\mathcal{G}, \mathbb{K}), & \text{if $k \geq 3$.}
\end{cases} \]
It is clear that $\mathcal{E}^{\ast}_2(\mathcal{G}, \mathbb{K}) \cap \text{Hol}(\mathbb{H}) = \mathcal{E}_2(\mathcal{G}, \mathbb{K})$ and $\mathcal{E}_2(\mathcal{G}, \mathbb{K})$ is a subspace of codimension $1$ inside $\mathcal{E}^{\ast}_2(\mathcal{G}, \mathbb{K})$. The $\mathbb{K}$-rational structure on the extended space of weight $2$ is also compatible with restriction to a principal level in the sense described above.  

\subsection{Eisenstein series on the universal elliptic curve}
\label{section3.2}
The following discussion aims to study a special family of Eisenstein series arising from the $N$-torsion points of the universal elliptic curve that exhibits remarkable rationality properties regarding period polynomials. 

Let $k$ be an integer $\geq 2$ and $\Lambda = \mathbb{Z}^2$. With each $\bar{\lambda} \in \Lambda/N\Lambda$ one associates the Hecke Eisenstein series described as   
\[G_{k}(\tau, \bar{\lambda}, N; s) := \sum_{\substack{(c,d) \equiv \lambda (N),\\ (c,d) \neq (0,0)}} \frac{v(\tau)^s}{(c\tau+d)^k \lvert c\tau+d \rvert^{2s}}\hspace{.3cm}(\substack{\tau \in \mathbb{H}})\]
where $\lambda$ is lift of $\bar{\lambda}$ and $v(\tau)$ is the imaginary part of $\tau$. This series converges uniformly on the compact subsets of $\text{Re}(k+2s) > 2$ to define an analytic function of $s$ in this region that admits an analytic continuation to the whole $s$ plane. Set $G_{k}(\tau, \bar{\lambda}, N) := G_{k}(\tau, \bar{\lambda}, N; 0)$. As a function on $\mathbb{H}$ the Fourier expansion of the $G$-series is 
$G_{k}(\tau, \bar{\lambda}, N) = \sum_{j \geq 0} A_{j}(\bar{\lambda}) \mathbf{e}(\frac{j\tau}{N}) + C_{0}(\tau)$ where 
\begin{equation}
\label{3.3}
\begin{gathered}
A_{0}(\bar{\lambda}) := \mathbbm{1}_{\mathbb{Z}/N\mathbb{Z}}(\bar{\lambda}_1, 0) \sum_{\substack{n \in \mathbb{Z} - \{0\}, \\ n \equiv \lambda_2(N)}} \frac{1}{n^k},\;C_0(\tau) := -\mathbbm{1}_{\mathbb{Z}}(k,2)\frac{\pi}{N^2v(\tau)}\\
A_{j}(\bar{\lambda}) := \frac{(-2\pi i)^k}{(k-1)! N^k} \sum_{\substack{n \in \mathbb{Z} - \{0\},\\ n \mid j, \frac{j}{n}\equiv \lambda_1(N)}} \text{sgn}(n)n^{k-1}\mathbf{e}(\frac{n\lambda_2}{N}) \hspace{.4cm}(\substack{j \geq 1})
\end{gathered}
\end{equation}
and $(\lambda_1, \lambda_2)$ is a lift of $\bar{\lambda}$ to $\Lambda$ \cite[9.1]{shimura07}. Recall the rational structure on the space of Eisenstein series from Section~\ref{section3.1} arising from the spectral basis. Suppose that $\mathbb{K}$ is a subfield of $\mathbb{C}$ containing $\mathbb{Q}(\mu_N)$. Then the $\mathbb{K}$-span of $\{G_{k}(\bar{\lambda}, N) \mid \bar{\lambda} \in \Lambda/N\Lambda\}$ equals $(2\pi i)^k \mathcal{E}_{k}^{\ast}\big(\Gamma(N), \mathbb{K}\big)$ and its intersection with $\text{Hol}(\mathbb{H})$ is $(2\pi i)^k \mathcal{E}_{k}\big(\Gamma(N), \mathbb{K}\big)$ \cite[5.2]{myself}. The explicit Fourier expansion makes $G$-series accessible for the computation of $L$-function and $L$-values. A standard approach towards this problem uses a cyclotomic linear combination of $G$-series to write down a Hecke eigenfunction on $\Gamma(N)$ whose $L$-function is given by a product of two Dirichlet $L$-functions \cite[IV-39]{ogg}. However, it is difficult to study the rationality properties of such product $L$-functions. 

We define the Eisenstein series on the universal elliptic curve as 
\[e_{k}: \mathbb{C} \times \mathbb{H} \to \mathbb{C}, \hspace{.3cm} e_{k}(\eta, \tau) = \sum_{w \in \Lambda_{\tau} - \{0\}} \frac{\chi_{\eta} (w)}{w^k}\hspace{.5cm}(\substack{k \geq 3})\]  
where $\Lambda_{\tau} = \{(c\tau+d) \mid (c,d) \in \Lambda\}$ and $\chi_{\eta} (w) = \mathbf{e}(\frac{w\bar{\eta} - \bar{w}\eta}{\tau - \bar{\tau}})$ \cite[4]{levin}. This series converges uniformly on compact subsets to define a real analytic function that transforms like a Jacobi form of weight $k$ and index $0$. In particular $e_{k}(\eta + c\tau + d, \tau) = e_{k}(\eta,\tau)$ for each $(c,d) \in \Lambda$. With $\bar{\lambda} \in \Lambda/N\Lambda$ one associates the division value  
\[e_{k}(\tau, \bar{\lambda}, N) := e_{k}(\frac{\lambda_1 \tau}{N} + \frac{\lambda_2}{N}, \tau) = \sum_{\substack{(c,d) \in \Lambda - \{0\}}} \frac{\mathbf{e}(\frac{c\lambda_2 - d\lambda_1}{N})}{(c\tau+d)^k}; \hspace{.3cm}\bar{\lambda} = \overline{(\lambda_1, \lambda_2)}.\]
Let $\mathbf{b} : \Lambda/N\Lambda \times \Lambda/N\Lambda \to \mathbb{C}$ be the nondegenerate bilinear form defined by \[\mathbf{b}(\bar{\lambda}, \bar{\theta}) = \mathbf{e}(\frac{\theta_1\lambda_2 - \theta_2\lambda_1}{N}); \hspace{.3cm}\text{$\bar{\lambda} = \overline{(\lambda_1, \lambda_2)}$ and $\bar{\theta} = \overline{(\theta_1, \theta_2)}$}.\] Then the relation between $e$-series and $G$-series is as follows:  
\begin{equation}
\label{3.4new}
e_k(\bar{\lambda}, N) = \sum_{\bar{\theta} \in \Lambda/N\Lambda} \mathbf{b}(\bar{\lambda}, \bar{\theta})G_{k}(\bar{\theta},N). 
\end{equation}
Let $k$ be an integer $\geq 2$ and $\bar{\lambda} \in \Lambda/N\Lambda$. We define the \textit{elliptic} Eisenstein series of weight $k$ and parameter $\bar{\lambda}$, denoted $e_{k}(\bar{\lambda}, N)$, using the identity \eqref{3.4new}. Note that $\mathbf{b}$ is compatible with the natural right action of $\Gamma$ on $\Lambda$, i.e., $\mathbf{b}(\bar{\lambda}\gamma, \bar{\theta}\gamma) = \mathbf{b}(\bar{\lambda}, \bar{\theta})$ for each $\gamma \in \Gamma$. Thus $e_{k}(\bar{\lambda},N)\lvert_{\gamma} = e_{k}(\bar{\lambda}\gamma, N)$. 
In particular $e_{k}(\bar{\lambda},N)$ is invariant under $\Gamma(N)$. One uses the orthogonality of characters for $\Lambda/N\Lambda$ to invert the linear relation \eqref{3.4new} and discovers that   \[G_{k}(\bar{\theta},N) = \frac{1}{N^2}\sum_{\bar{\lambda} \in \Lambda/N\Lambda} \mathbf{b}(\bar{\theta}, \bar{\lambda})e_{k}(\bar{\lambda},N).\] 
Therefore the $\mathbb{K}$-span of $\{e_{k}(\bar{\lambda}, N) \mid \bar{\lambda} \in \Lambda/N\Lambda\}$ equals the $\mathbb{K}$-span of $\{G_{k}(\bar{\lambda}, N) \mid \bar{\lambda} \in \Lambda/N\Lambda\}$ whenever $\mathbb{Q}(\mu_N) \subseteq \mathbb{K}$. 

We next record the Fourier expansion formula for the elliptic Eisenstein series. Let $B_{\bullet}(-)$ be the Bernoulli polynomial defined by the generating function identity \[\frac{X\exp(tX)}{\exp(X) - 1} = \sum_{n \geq 0} \frac{B_{n}(t)}{n!}X^n.\] 
Our calculation requires the Fourier expansion formula for the Bernoulli polynomials \cite[4.5]{shimura07}:
\begin{equation}
\label{3.5new}
B_{k}(t) = -\frac{k!}{(2\pi i)^k} \sum_{n \in \mathbb{Z} - \{0\}} \frac{\mathbf{e}(nt)}{n^k}. \hspace{.5cm}(\substack{k \geq 2,\; 0 \leq t \leq 1})
\end{equation}  
The formula is valid even for $k = 1$ if $0 < t < 1$ and one interprets the sum as $\lim_{m \to \infty} \sum_{0 < \lvert n \rvert \leq m}$.

\begin{lemma}
\label{lemma3.1}
Let $k$ be an integer $\geq 2$ and $\bar{\lambda} \in \Lambda/N\Lambda$. Suppose that $(\lambda_1, \lambda_2)$ is a lift of $\bar{\lambda}$ that satisfies $0 \leq \lambda_1 < N$. The Fourier series for $e_{k}(\bar{\lambda}, N)$ is 
\begin{equation*}
\begin{aligned}
e_{k}(\bar{\lambda},N) & = - \frac{(-2\pi i)^{k}}{k!} B_{k}(\frac{\lambda_1}{N}) - \mathbbm{1}_{\mathbb{Z}}(k,2)\mathbbm{1}_{\Lambda/N\Lambda}(\bar{\lambda}, 0) \frac{\pi}{v(\tau)}\\
& + \frac{(-2\pi i)^k}{(k-1)!N^{k-1}} \sum_{\substack{m, n \geq 1,\\ n \equiv \lambda_1 (N)}} n^{k-1} \mathbf{e}(\frac{mn\tau}{N}+ \frac{m\lambda_2}{N})\\
& + \frac{(2\pi i)^k}{(k-1)!N^{k-1}} \sum_{\substack{m, n \geq 1,\\ n \equiv -\lambda_1 (N)}} n^{k-1} \mathbf{e}(\frac{mn\tau}{N} - \frac{m\lambda_2}{N}). 
\end{aligned}
\end{equation*} 
\end{lemma}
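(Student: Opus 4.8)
The plan is to feed the known Fourier expansion \eqref{3.3} of the Hecke $G$-series into the defining relation \eqref{3.4new} for $e_{k}(\bar{\lambda},N)$ and then collapse the resulting finite sum over $\Lambda/N\Lambda$ using orthogonality of additive characters on $\mathbb{Z}/N\mathbb{Z}$. Writing $\bar{\theta}=\overline{(\theta_1,\theta_2)}$, one has $e_{k}(\bar{\lambda},N)=\sum_{\bar{\theta}}\mathbf{b}(\bar{\lambda},\bar{\theta})\big(A_{0}(\bar{\theta})+\sum_{j\geq 1}A_{j}(\bar{\theta})\mathbf{e}(j\tau/N)+C_{0}(\tau)\big)$; since the outer sum is finite and each inner series converges, the summations may be interchanged and the right-hand side splits into the $C_{0}$-, $A_{0}$-, and $\{A_{j}\}_{j\geq 1}$-contributions, which I would match with the four terms of the asserted formula one at a time.

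For the $C_{0}$-contribution I would use nondegeneracy of $\mathbf{b}$ in the form $\sum_{\bar{\theta}\in\Lambda/N\Lambda}\mathbf{b}(\bar{\lambda},\bar{\theta})=N^{2}\,\mathbbm{1}_{\Lambda/N\Lambda}(\bar{\lambda},0)$, so that this piece becomes $-\mathbbm{1}_{\mathbb{Z}}(k,2)\,\mathbbm{1}_{\Lambda/N\Lambda}(\bar{\lambda},0)\,\frac{\pi}{v(\tau)}$, the second term of the statement, and the only place where the non-holomorphic correction at $k=2$ enters. For the $A_{0}$-contribution, the factor $\mathbbm{1}_{\mathbb{Z}/N\mathbb{Z}}(\bar{\theta}_{1},0)$ forces $\theta_{1}\equiv 0$, whence $\mathbf{b}(\bar{\lambda},\bar{\theta})=\mathbf{e}(-\theta_{2}\lambda_{1}/N)$; summing the residue-class sum $\sum_{n\neq 0,\,n\equiv\theta_{2}(N)}n^{-k}$ against $\mathbf{e}(-\theta_{2}\lambda_{1}/N)$ over $\theta_{2}\bmod N$ unfolds it to $\sum_{n\neq 0}\mathbf{e}(-n\lambda_{1}/N)\,n^{-k}$, which by the Bernoulli--Fourier identity \eqref{3.5new} equals $-\frac{(-2\pi i)^{k}}{k!}B_{k}(\lambda_{1}/N)$. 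Here the hypothesis $0\leq\lambda_{1}<N$ is exactly what places $\lambda_{1}/N$ in the range of validity of \eqref{3.5new}, so that one lands on $B_{k}(\lambda_{1}/N)$ rather than $B_{k}(\{\lambda_{1}/N\})$.

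The bulk of the work is the $\{A_{j}\}_{j\geq 1}$-contribution. Substituting the divisor-sum expression for $A_{j}(\bar{\theta})$ and interchanging sums, the congruence $j/n\equiv\theta_{1}\pmod N$ pins down $\theta_{1}$, and the remaining character sum $\sum_{\theta_{2}\bmod N}\mathbf{e}\big(\tfrac{\theta_{1}\lambda_{2}-\theta_{2}\lambda_{1}}{N}\big)\mathbf{e}(n\theta_{2}/N)$ collapses to $N\,\mathbf{e}\big(\tfrac{(j/n)\lambda_{2}}{N}\big)\,\mathbbm{1}[n\equiv\lambda_{1}\ (N)]$. I would then split the divisor $n$ into its positive and negative parts: for $n>0$, reindexing $j=mn$ with $m\geq 1$ produces the double series in the third line of the statement; for $n<0$, writing $n=-n'$ with $n'>0$ turns $\sgn(n)n^{k-1}$ into $(-1)^{k}n'^{\,k-1}$, the coefficient $(-2\pi i)^{k}$ into $(2\pi i)^{k}$, and the condition $n\equiv\lambda_{1}$ into $n'\equiv-\lambda_{1}\pmod N$, and reindexing $j=mn'$ yields the fourth line.

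The argument is essentially a bookkeeping exercise, and the only genuinely delicate points are (i) the sign and parity factors in the negative-divisor branch just described, which account for the switch from $(-2\pi i)^{k}$ to $(2\pi i)^{k}$ between the third and fourth lines, and (ii) using the normalization $0\leq\lambda_{1}<N$ precisely where \eqref{3.5new} requires it. For $k\geq 3$ one could alternatively avoid \eqref{3.4new} altogether and compute directly from the absolutely convergent series $e_{k}(\bar{\lambda},N)=\sum_{(c,d)\neq(0,0)}\mathbf{e}\big(\tfrac{c\lambda_{2}-d\lambda_{1}}{N}\big)(c\tau+d)^{-k}$, separating $c=0$ from $c\neq 0$ and applying the Lipschitz summation formula to the inner sum over $d$ for each fixed $c$; this produces the same four terms, the $k=2$ case then being recovered from \eqref{3.4new} and \eqref{3.3}.
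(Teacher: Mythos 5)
Your proposal is correct and follows exactly the route of the paper's (very terse) proof: substitute the $G$-series Fourier expansion \eqref{3.3} into the defining relation \eqref{3.4new}, use the character sum over $\Lambda/N\Lambda$ to collapse the $C_0$- and $A_0$-terms (the latter via \eqref{3.5new}), and reindex the divisor sums over positive and negative divisors to produce the two $q$-series. The sign bookkeeping in the negative-divisor branch and the use of $0\leq\lambda_1<N$ for the Bernoulli identity are handled correctly.
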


\begin{proof}
We substitute the Fourier expansion formula for the $G$-series \eqref{3.3} into the defining relation \eqref{3.4new} and simplify the expression to obtain the identity above. The constant term is a consequence of \eqref{3.5new} while the real analytic term follows from a straightforward calculation. 
\end{proof}

\begin{corollary}
\label{corollary3.2}
Let $\bar{\lambda} \in \Lambda/N\Lambda$. Then $e_{2}(\bar{\lambda}, N)$ is holomorphic if and only if $\bar{\lambda} \neq 0$. Moreover $\{e_2(\bar{\lambda},N) \mid \bar{\lambda} \neq 0\}$ is a spanning set for the holomorphic subspace $(2\pi i)^{2}\mathcal{E}_2\big(\Gamma(N), \mathbb{K}\big)$ whenever $\mathbb{Q}(\mu_N) \subseteq \mathbb{K}$.  
\end{corollary}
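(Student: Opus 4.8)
The plan is to read off both assertions directly from the Fourier expansion in Lemma~\ref{lemma3.1} specialized to $k=2$, together with the structural facts about the $\mathbb{K}$-spans of the $G$- and $e$-series recorded earlier in this subsection.

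First I would settle the holomorphy criterion. Setting $k=2$ in Lemma~\ref{lemma3.1}, the only possibly non-holomorphic contribution to $e_{2}(\bar{\lambda},N)$ is the term $-\mathbbm{1}_{\Lambda/N\Lambda}(\bar{\lambda},0)\,\tfrac{\pi}{v(\tau)}$; the constant term $-\tfrac{(-2\pi i)^{2}}{2}B_{2}(\lambda_{1}/N)$ and the two double sums are $q$-series in $\mathbf{e}(\tau/N)$ that converge absolutely and locally uniformly on $\mathbb{H}$, hence represent holomorphic functions. If $\bar{\lambda}\neq 0$ the indicator vanishes and $e_{2}(\bar{\lambda},N)\in\mathrm{Hol}(\mathbb{H})$; if $\bar{\lambda}=0$ then $e_{2}(0,N)=-\tfrac{\pi}{v(\tau)}+(\text{holomorphic})$, and since $1/v(\tau)$ is real-analytic but not holomorphic and cannot be cancelled by a holomorphic function, $e_{2}(0,N)\notin\mathrm{Hol}(\mathbb{H})$. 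This proves the first assertion.

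For the spanning statement, fix $\mathbb{K}$ with $\mathbb{Q}(\mu_{N})\subseteq\mathbb{K}$ and recall from the discussion around \eqref{3.4new} that the $\mathbb{K}$-span of $\{e_{2}(\bar{\lambda},N)\mid\bar{\lambda}\in\Lambda/N\Lambda\}$ coincides with that of $\{G_{2}(\bar{\lambda},N)\mid\bar{\lambda}\in\Lambda/N\Lambda\}$, which by \cite[5.2]{myself} equals $(2\pi i)^{2}\mathcal{E}^{\ast}_{2}\big(\Gamma(N),\mathbb{K}\big)$ and has intersection $(2\pi i)^{2}\mathcal{E}_{2}\big(\Gamma(N),\mathbb{K}\big)$ with $\mathrm{Hol}(\mathbb{H})$. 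Let $W$ be the $\mathbb{K}$-span of $\{e_{2}(\bar{\lambda},N)\mid\bar{\lambda}\neq 0\}$. By the first part every such $e_{2}(\bar{\lambda},N)$ is holomorphic, and it lies in $(2\pi i)^{2}\mathcal{E}^{\ast}_{2}(\Gamma(N),\mathbb{K})$, so $W\subseteq(2\pi i)^{2}\mathcal{E}^{\ast}_{2}(\Gamma(N),\mathbb{K})\cap\mathrm{Hol}(\mathbb{H})=(2\pi i)^{2}\mathcal{E}_{2}(\Gamma(N),\mathbb{K})$.

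I would then close with a dimension count: adjoining the single vector $e_{2}(0,N)$ to $W$ recovers the full span, so $\dim_{\mathbb{K}}W\geq\dim_{\mathbb{K}}\mathcal{E}^{\ast}_{2}(\Gamma(N),\mathbb{K})-1$, and since $\mathcal{E}_{2}(\Gamma(N),\mathbb{K})$ has codimension $1$ in $\mathcal{E}^{\ast}_{2}(\Gamma(N),\mathbb{K})$ the right-hand side equals $\dim_{\mathbb{K}}\mathcal{E}_{2}(\Gamma(N),\mathbb{K})=\dim_{\mathbb{K}}(2\pi i)^{2}\mathcal{E}_{2}(\Gamma(N),\mathbb{K})$; together with the inclusion just established this forces $W=(2\pi i)^{2}\mathcal{E}_{2}(\Gamma(N),\mathbb{K})$. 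The argument is essentially bookkeeping, the only genuine inputs being Lemma~\ref{lemma3.1} and the two structural results from \cite{myself}; so the points to be careful about are the justification that the $q$-series in Lemma~\ref{lemma3.1} at $k=2$ define holomorphic functions, that $e_{2}(0,N)$ is well defined through the analytic continuation of the $G$-series, and that passing through the scalar $(2\pi i)^{2}$ (which need not lie in $\mathbb{K}$) is harmless since multiplication by it is a $\mathbb{K}$-linear automorphism of $\mathrm{Hol}(\mathbb{H})$ preserving the subspaces in question.
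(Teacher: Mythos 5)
Your proposal is correct and follows essentially the same route as the paper: the holomorphy criterion is read off from the Fourier expansion in Lemma~\ref{lemma3.1} (the sole nonholomorphic term being $-\mathbbm{1}_{\Lambda/N\Lambda}(\bar{\lambda},0)\,\pi/v(\tau)$), and the spanning statement follows from the codimension-one position of $(2\pi i)^2\mathcal{E}_2\big(\Gamma(N),\mathbb{K}\big)$ inside the extended space spanned by all the $e_2(\bar{\lambda},N)$. Your version merely makes explicit the dimension count and the convergence of the $q$-series that the paper leaves implicit.
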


\begin{proof}
The first part of the assertion is obvious. The extended space of $G$-series $(2\pi i)^2\mathcal{E}^{\ast}_{2}\big(\Gamma(N),\mathbb{K}\big)$ contains $(2\pi i)^2\mathcal{E}_{2}\big(\Gamma(N),\mathbb{K}\big)$ as a subspace of codimension $1$. But $\{e_2(\bar{\lambda},N) \mid \bar{\lambda} \neq 0\}$ is already a subset of holomorphic subspace. Therefore this collection spans $(2\pi i)^2\mathcal{E}_{2}\big(\Gamma(N),\mathbb{K}\big)$.   
\end{proof}

For convenience, we introduce the notation 
\begin{equation*}
	\mathcal{I}_{N,k} := \begin{cases}
		\Lambda/N\Lambda, & \text{$k$ even and $\geq 4$;}\\
		\Lambda/N\Lambda, & \text{$k$ odd and $N \geq 3$;}\\
		\Lambda/N\Lambda - \{0\}, & k=2;\\
		\emptyset, & \text{otherwise.}
	\end{cases}	
\end{equation*}
Note that $\mathcal{I}_{N,k}$ is stable under the action of $\Gamma$ on $\Lambda/N\Lambda$. 

\subsection{Fourier expansion formulas}
\label{section3.3}
Let $k$ and $\ell$ be two integers with $w := k + \ell \geq 3$. Suppose that $N \geq 1$ and $\bar{\lambda} \in \Lambda/N\Lambda$. Then the Maass Eisenstein series \cite{maass} of index $(k, \ell)$ attached to the parameter $\bar{\lambda}$ is given by 
\begin{equation}
\label{3.6}
G_{k,\ell} (\tau, \bar{\lambda}, N) := \sum_{\substack{(c,d) \equiv \lambda (N),\\ (c,d) \neq (0,0)}} \frac{1}{(c\tau+d)^k (c\bar{\tau}+d)^{\ell}}.
\end{equation} 
In the spectral theory of Eisenstein series, one writes down the Fourier expansion of $G_{k,\ell}$ using Whittaker functions \cite[p.134]{shimura07}. Recently, Brown \cite{brown-nonhol} has described a class of nonholomorphic modular forms on $\text{SL}_2(\mathbb{Z})$ with Fourier expansion in $\mathbb{C}[[q, \bar{q}]][v(\tau), \frac{1}{v(\tau)}]$ where $q = \mathbf{e}(\tau)$. In view of this theory, it is interesting to ask if the series $G_{k,\ell}$ has Fourier expansion in $\mathbb{C}[[q_N, \bar{q}_N]][v(\tau), \frac{1}{v(\tau)}]$ where $q_N = \mathbf{e}(\frac{\tau}{N})$. It is known \cite{o'sullivan} how to deduce this result from the spectral Fourier expansion, at least, for $N = 1$. The current discussion aims to show how to directly write down a Fourier expansion for $G_{k,\ell}$ of the form $\mathbb{C}[[q_N, \bar{q}_N]][v(\tau), \frac{1}{v(\tau)}]$ without any reference to the spectral expansion. This formula plays a role in the construction of generalized invariants in Section~\ref{section5}. Our calculation relies on the following key lemma:

\begin{lemma}
\label{lemma3.3}\emph{\cite[A3]{shimura07}} Let $k$ and $\ell$ be two integers with $k + \ell \geq 2$. Suppose that $\tau = u + i v \in \mathbb{H}$. Then  
\begin{equation}
\label{3.7new}
\begin{aligned}
\sum_{m \in \mathbb{Z}} \frac{1}{(\tau + m)^k (\bar{\tau} + m)^{\ell}} & = (- 2\pi i) \sum_{\substack{0 \leq r \leq k-1,\\ n \geq 0}} \frac{\binom{-\ell}{r}(-2\pi i n)^{k- 1-r}\mathbf{e}(n\tau)}{(-2iv)^{\ell + r}(k - 1- r)!} \\
& \hspace{.7cm} + 2\pi i \sum_{\substack{0 \leq r \leq \ell-1,\\ n < 0}} \frac{\binom{-k}{r}(-2\pi i n)^{\ell- 1-r}\mathbf{e}(n\bar{\tau})}{(2iv)^{k + r}(\ell - 1- r)!}
\end{aligned}
\end{equation}
where $\binom{\bullet}{\bullet}$ is the formal binomial coeffcient in Section~\ref{section1.1}.  
\end{lemma}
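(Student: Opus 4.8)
The plan is to prove the partial-fraction/Fourier identity \eqref{3.7new} by treating the left-hand side as a function of $\tau$ with $v = \operatorname{Im}\tau$ fixed and expanding it in a Fourier series in the real variable $u$. Fix $v > 0$ and set $g(\tau) := \sum_{m \in \mathbb{Z}} (\tau+m)^{-k}(\bar\tau+m)^{-\ell}$. Since $k+\ell \geq 2$ the sum converges absolutely and locally uniformly, so $g$ is a continuous, $1$-periodic function of $u$; hence $g(\tau) = \sum_{n \in \mathbb{Z}} a_n(v)\,\mathbf{e}(nu)$ with $a_n(v) = \int_0^1 g(u+iv)\mathbf{e}(-nu)\,du$. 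Unfolding the $m$-sum against the integral over the period turns this into a single integral over the whole real line,
\[
a_n(v) = \int_{-\infty}^{\infty} \frac{\mathbf{e}(-nx)}{(x+iv)^k (x-iv)^{\ell}}\,dx,
\]
after writing $\tau = u+iv$ and substituting $x = u+m$ (this is the standard Poisson-summation manoeuvre; absolute convergence justifies the interchange). So the problem reduces to evaluating this one contour integral.

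Next I would compute $a_n(v)$ by residues. The integrand $(x+iv)^{-k}(x-iv)^{-\ell}$ is meromorphic in $x$ with poles of order $k$ at $x = -iv$ (lower half-plane) and order $\ell$ at $x = iv$ (upper half-plane), and it decays like $|x|^{-k-\ell}$ at infinity; since $k+\ell \geq 2$ the semicircular arc contributes nothing in the limit. For $n \geq 0$ one closes the contour in the lower half-plane (so that $\mathbf{e}(-nx) = e^{-2\pi i n x}$ decays as $\operatorname{Im} x \to -\infty$), picking up $-2\pi i$ times the residue at $x=-iv$; for $n < 0$ one closes in the upper half-plane, picking up $2\pi i$ times the residue at $x = iv$. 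The order-$k$ (resp. order-$\ell$) residue is computed by the Leibniz rule: expand $(x-iv)^{-\ell}$ and $\mathbf{e}(-nx)$ in Taylor series about $x=-iv$, collect the coefficient of $(x+iv)^{k-1}$, and one obtains exactly a sum over $0 \leq r \leq k-1$ with the factor $\binom{-\ell}{r}$ (from differentiating $(x-iv)^{-\ell}$ $r$ times, evaluated at $x=-iv$, which produces $(-2iv)^{-\ell-r}$), the factor $(-2\pi i n)^{k-1-r}/(k-1-r)!$ (from the Taylor coefficient of $\mathbf{e}(-nx)$), and the residual $\mathbf{e}(-n(-iv)) = \mathbf{e}(n iv)$ combining with $\mathbf{e}(nu)$ to give $\mathbf{e}(n\tau)$. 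The $n<0$ sum is handled symmetrically and yields $\mathbf{e}(n\bar\tau)$ together with the $\binom{-k}{r}$ factors over $0 \leq r \leq \ell-1$. Re-summing over $n$ and $r$ reproduces the two double sums in \eqref{3.7new} verbatim.

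The main obstacle is not conceptual but bookkeeping: one must get the combinatorics of the higher-order residue exactly right — the correct range of $r$, the sign and power of $2iv$, the binomial $\binom{-\ell}{r}$ in its formal (negative-upper-index) normalization from Section~\ref{section1.1}, and the placement of the overall $\mp 2\pi i$ — and one must be careful about the edge cases $k=0$ or $\ell=0$ (where one of the poles disappears and the corresponding sum is empty, consistent with the convention $\binom{X}{n}=0$ for $n<0$) and about boundary convergence when $n=0$ and $k+\ell=2$ (where the arc estimate is borderline but still valid since the integrand is $O(|x|^{-2})$ and the principal-value/symmetric interpretation suffices). I would also double-check the direction of contour closure against the sign convention $\mathbf{e}(z) = e^{2\pi i z}$ so that the exponential genuinely decays on the chosen semicircle; everything else is a routine residue computation.
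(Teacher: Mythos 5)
Your proposal is correct and follows essentially the same route as the paper: Poisson summation (equivalently, the Fourier expansion in $u$ with the unfolding of the $m$-sum) to reduce to the single integral $\int_{\mathbb{R}} \mathbf{e}(-nx)(x+iv)^{-k}(x-iv)^{-\ell}\,dx$, followed by residue evaluation with the contour closed in the lower half-plane for $n \geq 0$ and the upper half-plane for $n < 0$. The higher-order residue computation you sketch, producing $\binom{-\ell}{r}(-2iv)^{-\ell-r}$ from the expansion of $(x-iv)^{-\ell}$ and $(-2\pi i n)^{k-1-r}/(k-1-r)!$ from the exponential, is exactly the one the paper records.
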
 

\begin{proof}
For a fixed $v > 0$, write $f_v(u) = (u+iv)^{-k}(u-iv)^{-\ell}$. Now, an application of the Poisson summation formula shows that 
\[\sum_{m \in \mathbb{Z}} \frac{1}{(\tau + m)^k (\bar{\tau} + m)^{\ell}} = \sum_{m \in \mathbb{Z}} f_{v}(u+m) = \sum_{n \in \mathbb{Z}} \widehat{f}_{v}(n)\mathbf{e}(nu)\]
where $\widehat{f}_{v}(n) = \int_{\mathbb{R}} \frac{\mathbf{e}(-nt)dt} {(t + iv)^{k}(t - iv)^{\ell}}$. We evaluate $\widehat{f}_{v}(n)$ using contour integration on the complex plane opposed to the theory of special functions. Set $\varphi_n(z) = \frac{\mathbf{e}(-nz)}{(z+iv)^k(z-iv)^{\ell}}$. For $R$ be a positive real number, let $I_R$ be the segment joining $-R$ to $R$, $C^{+}_{R}$ be the semicircular arc of radius $R$ on the upper half-plane joining $R$ to $-R$, and $C^{-}_{R}$ be the semicircular arc of radius $R$ on the lower half-plane joining $-R$ to $R$. Letting $R \to \infty$, we discover that $\int_{C_{R}^{-}} \lvert \varphi_{n}(z) \rvert dz \to 0$ if $n \geq 0$ and $\int_{C_{R}^{+}} \lvert \varphi_{n}(z) \rvert dz \to 0$ if $n < 0$. Moreover, the  residue of $\varphi_n(z)$ at $-iv$, resp. $iv$, equals 
\[\text{$\sum_{r=0}^{k-1} \frac{\binom{-\ell}{r} (-2\pi in)^{k-1-r} \mathbf{e}(niv)} {(-2iv)^{\ell+r}(k-1-r)!}$, resp. $\sum_{r=0}^{\ell-1} \frac{\binom{-k}{r} (-2\pi in)^{\ell-1-r} \mathbf{e}(-niv)} {(2iv)^{k+r}(\ell-1-r)!}$.}\]
We evaluate $\widehat{f}_v(n)$ using the closed contour defined by $C_R^{-}$ and $I_R$ if $n \geq 0$ and $C_R^{+}$ and $I_R$ if $n < 0$ for large $R$ and let $R \to \infty$ to arrive at the desired identity.    
\end{proof}

Lemma~\ref{lemma3.3} immediately yields the Fourier expansion formula of the Maass Eisenstein series.

\begin{theorem}
\label{theorem3.4}
Let the notation be the same as \eqref{3.6}. Suppose that $(\lambda_1, \lambda_2)$ is a lift of $\bar{\lambda}$ that satisfies $0 \leq \lambda_1 < N$. Then 
\[G_{k, \ell}(\tau, \bar{\lambda}, N) = A_0(\bar{\lambda}) + C_0(\bar{\lambda}, \tau) + \sum_{j \geq 1} A_j(\bar{\lambda}, \tau)q_N^j + \sum_{j \geq 1} C_j(\bar{\lambda}, \tau)\bar{q}_N^j\]
where 
\begin{equation*}
\begin{gathered}
A_0 = \mathbbm{1}_{\mathbb{Z}/N\mathbb{Z}}(\bar{\lambda}_1,0) \sum_{\substack{n \equiv \lambda_2(N),\\ n \neq 0}} \frac{1}{n^w},\\ 
C_0 = -2\pi i\frac{\Big(\binom{-\ell}{k-1}\zeta^{\ast}(\frac{\lambda_1}{N}, w-1) + \binom{-k}{\ell-1}\zeta(1-\frac{\lambda_1}{N}, w-1)\Big)}{N^{w}(-2iv)^{w-1}},\\
A_{j} = -2\pi i \sum_{\substack{mn = j, \\ m \equiv \lambda_1(N)}} \sum_{0 \leq r \leq k-1} \frac{\sgn(n)}{N^{k-r}} \frac{\binom{-\ell}{r} (-2\pi i n)^{k-1-r}\mathbf{e}(\frac{n\lambda_2}{N})} {(-2imv)^{\ell +r}(k-1-r)!},\hspace{.3cm}(\substack{j \geq 1})\\
C_j = 2 \pi i \sum_{\substack{mn = j, \\ m \equiv \lambda_1(N)}} \sum_{0 \leq r \leq \ell-1} \frac{\sgn(n)}{N^{\ell-r}} \frac{\binom{-k}{r} (2\pi i n)^{\ell-1-r}\mathbf{e}(-\frac{n\lambda_2}{N})} {(2imv)^{k +r}(\ell-1-r)!}\hspace{.3cm}(\substack{j \geq 1})
\end{gathered}
\end{equation*}
and 
\[\zeta^{\ast}(\frac{\lambda_1}{N}, s)  = \begin{cases}
	\zeta(\frac{\lambda_1}{N}, s), & \text{if $0 < \lambda_1 < N$;}	\\
	\zeta(1, s), & \text{if $\lambda_1 = 0$.}
\end{cases}\]
In particular, $G_{k, \ell}(\bar{\lambda};N) \in \mathbb{C}[[q_N, \bar{q}_N]][v(\tau), \frac{1}{v(\tau)}]$. 
\end{theorem}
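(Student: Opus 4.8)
The plan is to compute the Fourier expansion of $G_{k,\ell}(\tau,\bar{\lambda},N)$ one row at a time: fix $c$, sum over $d$, feed the resulting one-dimensional lattice sum into Lemma~\ref{lemma3.3}, and then reassemble the output by collecting powers of $q_N$ and $\bar{q}_N$. Since $w=k+\ell\ge 3$ the defining series converges absolutely (its general term is $O(|(c,d)|^{-w})$), so the rearrangements below are legitimate; in particular the row-sum $\Sigma_c$ introduced below satisfies $\Sigma_c=O(|c|^{1-w})$ by comparison with $\int_{\mathbb{R}}|c\tau+t|^{-w}\,dt$, so $\sum_{c\neq 0}\Sigma_c$ converges, and inside each row the coefficients produced by Lemma~\ref{lemma3.3} decay like $e^{-2\pi|c|v/N}$, which dominates all the polynomial factors in $n$ and $c$. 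This is the only place the hypothesis $w\ge 3$ (equivalently $w-1\ge 2$) enters essentially — it also makes the Hurwitz-zeta series appearing in $C_0$ convergent.

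First I would peel off the row $c=0$. Because $0\le\lambda_1<N$, the residue class $\bar{\lambda}_1$ contains $0$ precisely when $\lambda_1=0$, and in that case the $c=0$ row equals $\sum_{d\equiv\lambda_2(N),\,d\ne 0}d^{-w}$, which is exactly $A_0(\bar{\lambda})$; the indicator $\mathbbm{1}_{\mathbb{Z}/N\mathbb{Z}}(\bar{\lambda}_1,0)$ records the vanishing case. For $c\ne 0$, writing $d=\lambda_2+Nd'$ gives
\[
\Sigma_c:=\sum_{d\equiv\lambda_2(N)}\frac{1}{(c\tau+d)^{k}(c\bar\tau+d)^{\ell}}
=\frac{1}{N^{w}}\sum_{m\in\mathbb{Z}}\frac{1}{(\tau_c+m)^{k}(\overline{\tau_c}+m)^{\ell}},\qquad \tau_c:=\tfrac{c\tau+\lambda_2}{N},\ \ \text{Im}\,\tau_c=\tfrac{cv}{N}.
\]
When $c>0$ one has $\tau_c\in\mathbb{H}$ and Lemma~\ref{lemma3.3} applies verbatim. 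When $c<0$ one has $\overline{\tau_c}\in\mathbb{H}$, and I would rewrite the inner sum as $\sum_{m}(\overline{\tau_c}+m)^{-\ell}\big(\overline{\,\overline{\tau_c}+m\,}\big)^{-k}$ and invoke Lemma~\ref{lemma3.3} with $k$ and $\ell$ interchanged and $\tau$ replaced by $\overline{\tau_c}\in\mathbb{H}$. After reintroducing $m=c$, the identity $\text{Im}\,\tau_c=cv/N$ turns every power of the imaginary part into a power of $v$ times a power of $N$, while $\mathbf{e}(n\tau_c)=q_N^{\,nc}\,\mathbf{e}(\tfrac{n\lambda_2}{N})$ and $\mathbf{e}(n\overline{\tau_c})=\bar{q}_N^{\,-nc}\,\mathbf{e}(\tfrac{n\lambda_2}{N})$ produce the monomials in $q_N$ and $\bar{q}_N$.

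Next I would sort the terms by the exponent of $q_N$ and $\bar{q}_N$. The holomorphic ($q_N$) terms come from the $n\ge 0$ branch of Lemma~\ref{lemma3.3} when $c>0$ and from the $n<0$ branch of the $(k\!\leftrightarrow\!\ell)$-swapped identity when $c<0$; setting $j=|nc|>0$ and $m=c$ one checks that these assemble precisely into the stated $A_j$, the two sign regimes of $c$ producing respectively the $m>0$ and $m<0$ summands and the factor $\sgn(n)$ with $n=j/m$. Symmetrically, the antiholomorphic ($\bar{q}_N$) terms assemble into $C_j$; the appearance of $\binom{-\ell}{r}$ in $A_j$ versus $\binom{-k}{r}$ in $C_j$ is exactly what the interchange of $k$ and $\ell$ yields. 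Finally, the $n=0$ terms survive only for $r=k-1$ (respectively $r=\ell-1$), and summing $\sum_c|c|^{1-w}$ over the pertinent residue class — here the hypothesis $0\le\lambda_1<N$ is what fixes the class and forces the branch split in the definition of $\zeta^{\ast}$ — produces $\binom{-\ell}{k-1}\zeta^{\ast}(\tfrac{\lambda_1}{N},w-1)$ from $c>0$ and $\binom{-k}{\ell-1}\zeta(1-\tfrac{\lambda_1}{N},w-1)$ from $c<0$, i.e.\ exactly $C_0$. (If $k\le 0$ or $\ell\le 0$, the corresponding $r$-ranges are empty and the matching binomial symbols vanish by the convention of Section~\ref{section1.1}, so the formula stays consistent.) Assembling these four streams together with the $c=0$ contribution gives the claimed identity; since every coefficient lies in $\mathbb{C}[v,1/v]$ and only nonnegative powers of $q_N,\bar{q}_N$ occur, $G_{k,\ell}(\bar{\lambda};N)\in\mathbb{C}[[q_N,\bar{q}_N]][v,1/v]$.

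I expect the one genuine difficulty to be organizational rather than conceptual: carrying the interchange $k\leftrightarrow\ell$ and the conjugation through the rows with $c<0$ while faithfully tracking the signs, the binomial coefficients, the powers of $N$, and the factor $\sgn(n)$, and then correctly matching the $n=0$ rows against the right Hurwitz zeta value, including its special value at $\lambda_1=0$. Beyond the absolute convergence guaranteed by $w\ge 3$, there is no analytic subtlety.
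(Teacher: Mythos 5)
Your proposal is correct and follows essentially the same route as the paper: split the lattice sum into rows of fixed $c$, apply Lemma~\ref{lemma3.3} to each row, and identify the $c=0$ contribution with $A_0$ and the $n=0$ terms with $C_0$. Your explicit treatment of the rows with $c<0$ (conjugating and swapping $k\leftrightarrow\ell$ so that Lemma~\ref{lemma3.3} applies to a point of $\mathbb{H}$) and the bookkeeping of $\sgn(n)$ and the powers of $N$ are exactly the details the paper leaves implicit, and they check out.
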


\begin{proof}
The assertion is an easy consequence of the formula \eqref{3.7new}. The terms in \eqref{3.6} with $c = 0$ give rise to $A_0$ while the terms in \eqref{3.7new}  with $n = 0$ amount to $C_0$ in the identity above. 
\end{proof}

With the notation of \eqref{3.6}, one can also define a nonholomorphic elliptic Eisenstein series of index $(k, \ell)$ by 
\begin{equation}
\label{3.8new}
e_{k,\ell}(\bar{\lambda}, N) := \sum_{(c,d) \neq (0,0)} \frac{\mathbf{b}(\bar{\lambda}, \overline{(c,d)})} {(c\tau + d)^k (c\bar{\tau} + d)^\ell};
\end{equation}
cf. \cite[4]{levin}.   
The following Fourier expansion formula is necessary for us: 

\begin{corollary}
\label{corollary3.5} Suppose that $(\lambda_1, \lambda_2) \in \Lambda$ is a lift of $\bar{\lambda}$ with $0 \leq \lambda_1 < N$. Then 
\[e_{k, \ell}(\tau, \bar{\lambda}, N) = A^{\emph{ell}}_0(\bar{\lambda}) + C^{\emph{ell}}_0(\bar{\lambda}, \tau) + \sum_{j \geq 1} A^{\emph{ell}}_j(\bar{\lambda}, \tau)q_N^j + \sum_{j \geq 1} C^{\emph{ell}}_j(\bar{\lambda}, \tau)\bar{q}_N^j\]
where 
\begin{equation*}
\begin{gathered}
A^{\emph{ell}}_0 = - \frac{(-2 \pi i)^w}{w!}B_{w}(\frac{\lambda_1}{N}),\\ 
C^{\emph{ell}}_0 = -2\pi i\mathbbm{1}_{\mathbb{Z}/N\mathbb{Z}}(\bar{\lambda}_1, 0)\frac{\Big(\binom{-\ell}{k-1}\emph{Li}_{w-1}(\mathbf{e}(\frac{\lambda_2}{N})) + \binom{-k}{\ell-1}\emph{Li}_{w-1}\big(\mathbf{e}(-\frac{\lambda_2}{N})\big)}{(-2iv)^{w-1}},\\
A^{\emph{ell}}_{j} = -2\pi i \sum_{\substack{mn = j, \\ n \equiv \lambda_1(N)}} \sum_{0 \leq r \leq k-1} \frac{\sgn(n)}{N^{k-r-1}} \frac{\binom{-\ell}{r} (-2\pi i n)^{k-1-r}\mathbf{e}(\frac{m\lambda_2}{N})} {(-2imv)^{\ell +r}(k-1-r)!},\hspace{.3cm}(\substack{j \geq 1})\\
C^{\emph{ell}}_j = 2 \pi i \sum_{\substack{mn = j, \\ n \equiv -\lambda_1(N)}} \sum_{0 \leq r \leq \ell-1} \frac{\sgn(n)}{N^{\ell-r-1}} \frac{\binom{-k}{r} (2\pi i n)^{\ell-1-r}\mathbf{e}(\frac{m\lambda_2}{N})} {(2imv)^{k +r}(\ell-1-r)!}\hspace{.3cm}(\substack{j \geq 1})
\end{gathered}
\end{equation*}
\end{corollary}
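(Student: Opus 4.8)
The plan is to deduce the expansion directly from Theorem~\ref{theorem3.4} by writing $e_{k,\ell}$ as a finite character combination of Maass Eisenstein series. First I would record the analogue of \eqref{3.4new} for index $(k,\ell)$, namely
\[e_{k,\ell}(\tau, \bar\lambda, N) = \sum_{\bar\theta \in \Lambda/N\Lambda} \mathbf{b}(\bar\lambda, \bar\theta)\, G_{k,\ell}(\tau, \bar\theta, N),\]
which follows at once from \eqref{3.8new} by grouping the lattice vectors $(c,d) \neq (0,0)$ according to their class modulo $N\Lambda$, since $\mathbf{b}(\bar\lambda, \overline{(c,d)})$ depends only on that class; the rearrangement is harmless because the series defining $G_{k,\ell}$ converges absolutely for $w := k+\ell \geq 3$ and the sum over $\bar\theta$ is finite. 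Substituting the Fourier expansion of Theorem~\ref{theorem3.4} into the right-hand side then reduces the problem to evaluating, for each of the modes $A_0, C_0, A_j, C_j$, the finite character sum obtained by weighting the corresponding coefficient of $G_{k,\ell}(\bar\theta, N)$ by $\mathbf{b}(\bar\lambda, \bar\theta)$.

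For the evaluation I would fix lifts $(\theta_1, \theta_2)$ with $0 \leq \theta_1 < N$, so that $\mathbf{b}(\bar\lambda, \bar\theta) = \mathbf{e}\!\big((\theta_1\lambda_2 - \theta_2\lambda_1)/N\big)$, and observe that in every coefficient of Theorem~\ref{theorem3.4} the dependence on $\theta_2$ is carried by a single factor $\mathbf{e}(n\theta_2/N)$ (with $n = 0$ for $A_0$ and $C_0$). Summing over $\theta_2 \bmod N$ and invoking orthogonality of additive characters then (i) restricts the $A_j$, $C_j$ sums to the indices with $n \equiv \lambda_1$, resp.\ $n \equiv -\lambda_1 \pmod N$, and (ii) forces $\lambda_1 \equiv 0 \pmod N$, i.e.\ the factor $\mathbbm{1}_{\mathbb{Z}/N\mathbb{Z}}(\bar\lambda_1, 0)$, in $A_0$ and $C_0$. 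The residual sum over $\theta_1$ is resolved by the congruence $\theta_1 \equiv m \pmod N$ inside $A_j$, $C_j$, which replaces $\mathbf{e}(\theta_1 \lambda_2/N)$ by $\mathbf{e}(m\lambda_2/N)$; after absorbing the extra factor of $N$ from the $\theta_2$-sum against the powers of $N$ present in the coefficients one reads off precisely $A^{\mathrm{ell}}_j$ and $C^{\mathrm{ell}}_j$ for $j \geq 1$. For $A_0$ the residual sum collapses to $\sum_{n \neq 0} n^{-w}\,\mathbf{e}(-n\lambda_1/N)$, and applying the Bernoulli Fourier expansion \eqref{3.5new} at the argument $1 - \lambda_1/N \in (0,1]$ together with $B_w(1-t) = (-1)^w B_w(t)$ gives $-\frac{(-2\pi i)^w}{w!} B_w(\lambda_1/N) = A^{\mathrm{ell}}_0$.

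The one step that is more than mechanical is the $v$-power term $C^{\mathrm{ell}}_0$, where, once $\lambda_1 \equiv 0$, one must evaluate $\sum_{\theta_1=0}^{N-1} \mathbf{e}(\theta_1 \lambda_2/N)\, \zeta^{*}(\theta_1/N, w-1)$ and its companion with $\zeta(1 - \theta_1/N, w-1)$. I would unfold each Hurwitz zeta value as a Dirichlet series over a residue class modulo $N$, namely $\zeta^{*}(\theta_1/N, s) = N^{s} \sum_{j \geq 1,\ j \equiv \theta_1 (N)} j^{-s}$ (the convention $\zeta^{*}(0, s) = \zeta(s)$ in Theorem~\ref{theorem3.4} being exactly what makes the $\theta_1 = 0$ term fit this pattern, with $j$ running over positive multiples of $N$) and $\zeta(1 - \theta_1/N, s) = N^{s} \sum_{j \geq 1,\ j \equiv -\theta_1 (N)} j^{-s}$. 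Summing against $\mathbf{e}(\theta_1 \lambda_2/N)$ reassembles the full Dirichlet series $N^{w-1}\,\mathrm{Li}_{w-1}(\mathbf{e}(\lambda_2/N))$ from the first sum, and $N^{w-1}\,\mathrm{Li}_{w-1}(\mathbf{e}(-\lambda_2/N))$ from the second after the substitution $\theta_1 \mapsto N - \theta_1$; the factor $N^{w-1}$ cancels $N^{w}$ in $C_0$ against the $N$ from the $\theta_2$-sum, producing exactly $C^{\mathrm{ell}}_0$. Collecting the four pieces yields the stated expansion, and membership in $\mathbb{C}[[q_N, \bar{q}_N]][v, 1/v]$ is inherited from Theorem~\ref{theorem3.4}. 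I expect the main difficulty to be purely organizational---keeping the signs, the powers of $N$, and the various congruence conditions straight through the orthogonality relations and the Hurwitz-zeta-to-polylogarithm identification---since there is no analytic subtlety: every sum that appears is either finite or an already established Fourier series.
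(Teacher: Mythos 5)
Your proposal is correct and follows exactly the paper's route: the paper's proof is the one-line observation that the corollary follows from Theorem~\ref{theorem3.4} together with the identity $e_{k,\ell}(\bar{\lambda},N)=\sum_{\bar{\theta}}\mathbf{b}(\bar{\lambda},\bar{\theta})G_{k,\ell}(\bar{\theta},N)$, and your character-orthogonality bookkeeping (including the $\zeta^{*}$-to-$\mathrm{Li}_{w-1}$ reassembly and the Bernoulli reflection for $A^{\mathrm{ell}}_0$) is precisely the computation left implicit there, carried out correctly.
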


\begin{proof}
Follows from Theorem~\ref{theorem3.4} and the identity \[e_{k, \ell}(\bar{\lambda}, N) = \sum_{\bar{\theta} \in \Lambda/N\Lambda} \mathbf{b}(\bar{\lambda}, \bar{\theta})G_{k, \ell}(\bar{\theta},N).\]  
\end{proof}

Next, we connect Theorem~\ref{theorem3.4} with the spectral Eisenstein series in Section~\ref{section3.1}. Let the notation be as in \eqref{3.1} and suppose that $\mathcal{G}$ contains the principal subgroup $\Gamma(N)$. 

\begin{corollary}
\label{corollary3.6}
Let $s_0$ be an integer so that $\emph{Re}(k + 2s_0) \geq 3$. Then 
\[E_{k,x}(\tau, s_0) \in \mathbb{C}[[q_N, \bar{q}_N]][v(\tau), \frac{1}{v(\tau)}].\]
\end{corollary}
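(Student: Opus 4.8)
The plan is to express the spectral Eisenstein series $E_{k,x}(\tau;s_0)$ as a finite linear combination of Maass Eisenstein series $G_{k+2s_0, 2s_0}(\tau,\bar\lambda,N)$ of index $(k+2s_0,2s_0)$ and then invoke Theorem~\ref{theorem3.4}, which establishes that each such $G_{k+2s_0,2s_0}(\bar\lambda;N)$ lies in $\mathbb{C}[[q_N,\bar q_N]][v(\tau),\frac{1}{v(\tau)}]$. Since the stated ring is a $\mathbb{C}$-vector space, a finite $\mathbb{C}$-linear combination of its elements stays inside it, so the result follows once the reduction is in place.

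The first step is to unwind the definition \eqref{3.1}. Choosing a scaling matrix $\sigma_x$ and writing $\sigma_x^{-1}\gamma = \begin{pmatrix} a & b \\ c & d\end{pmatrix}$, one has $\mathbf{j}(\sigma_x^{-1}\gamma,\tau)^{-k} v(\sigma_x^{-1}\gamma\tau)^{s_0} = (c\tau+d)^{-k}\cdot v(\tau)^{s_0}\lvert c\tau+d\rvert^{-2s_0} = v(\tau)^{s_0}(c\tau+d)^{-k-s_0}(c\bar\tau+d)^{-s_0}$, since $v(\sigma_x^{-1}\gamma\tau) = v(\tau)/\lvert c\tau+d\rvert^2$. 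Thus, up to the harmless overall factor $v(\tau)^{s_0}$ (which is a unit in the ring), $E_{k,x}(\tau;s_0)$ is a sum over the coset space $\mathcal{G}_x\backslash\mathcal{G}$ of terms $(c\tau+d)^{-k-s_0}(c\bar\tau+d)^{-s_0}$. The key observation is that as $\gamma$ runs over $\mathcal{G}_x\backslash \mathcal{G}$, the bottom row $(c,d)$ of $\sigma_x^{-1}\gamma$ runs over a union of residue classes modulo $N$ (because $\Gamma(N)\subseteq\mathcal{G}$ forces the sum to be organized by congruence classes of $(c,d)$), each class being hit with the correct multiplicity $[\mathcal{G}_x : \Gamma(N)_x]^{-1}$ or similar — this is exactly the bookkeeping already carried out in the author's reference \cite{myself} expressing spectral Eisenstein series on $\mathcal{G}$ in terms of those on $\Gamma(N)$. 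Concretely, one writes $E_{k,x}(\tau;s_0) = v(\tau)^{s_0}\sum_{\bar\lambda} c_{\bar\lambda}\, G_{k+2s_0,2s_0}(\tau,\bar\lambda,N)$ for suitable constants $c_{\bar\lambda}$ determined by the coset decomposition, valid first in the region of absolute convergence $\mathrm{Re}(k+2s_0)>2$ and hence $\geq 3$ as hypothesized (so no analytic continuation subtlety arises — note the hypothesis $\mathrm{Re}(k+2s_0)\geq 3$ is precisely what keeps us inside the convergent range and away from the nonholomorphic analytic-continuation term that appears at $k+2s_0=2$).

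With that identity established, I would simply apply Theorem~\ref{theorem3.4}: each $G_{k+2s_0,2s_0}(\tau,\bar\lambda,N)$ has a Fourier expansion of the asserted shape, and multiplying by $v(\tau)^{s_0}$ (a monomial in $v(\tau)$, hence in the ring) and taking a finite $\mathbb{C}$-linear combination keeps us in $\mathbb{C}[[q_N,\bar q_N]][v(\tau),\frac{1}{v(\tau)}]$. This completes the proof.

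The main obstacle is the first step: verifying that the coset sum over $\mathcal{G}_x\backslash\mathcal{G}$ decomposes cleanly into $G$-series indexed by residue classes $\bar\lambda\in\Lambda/N\Lambda$ with well-defined rational (or cyclotomic) coefficients. This requires knowing that the stabilizer $\mathcal{G}_x$ acts compatibly with the $\Gamma(N)$-orbit structure and that the scaling matrix $\sigma_x$ can be absorbed into a translation of the index $\bar\lambda$ — facts that are standard in the structure theory of \cite{myself} but need to be cited precisely. Once that combinatorial reduction is granted, everything else is formal. One should also note in passing that the integrality condition $s_0\in\mathbb{Z}$ is essential so that $v(\tau)^{s_0}$ is an honest element of $\mathbb{C}[v(\tau),\frac{1}{v(\tau)}]$ rather than a genuinely transcendental power.
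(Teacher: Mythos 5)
Your proposal follows essentially the same route as the paper: both reduce $E_{k,x}(\tau;s_0)$ to a finite $\mathbb{C}$-linear combination of the congruence-class series covered by Theorem~\ref{theorem3.4} via the structural decompositions of \cite{myself}, and then use that the target ring is a $\mathbb{C}$-vector space closed under multiplication by $v(\tau)^{s_0}$. One correction: your own displayed computation gives $v(\tau)^{s_0}(c\tau+d)^{-(k+s_0)}(c\bar{\tau}+d)^{-s_0}$, so the relevant Maass series has index $(k+s_0,\,s_0)$, not $(k+2s_0,\,2s_0)$; with the index as written the total weight would be $k+4s_0$, which can drop below the threshold $w\geq 3$ of Theorem~\ref{theorem3.4} when $s_0<0$, whereas the correct index gives total weight exactly $k+2s_0\geq 3$ as required. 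Note also that the cosets $\mathcal{G}_x\backslash\mathcal{G}$ produce only \emph{primitive} bottom rows $(c,d)$, while $G_{k,\ell}(\tau,\bar{\lambda},N)$ sums over all nonzero $(c,d)$ in a residue class, so the passage between the two involves Dirichlet-series coefficients rather than a bare union of residue classes each hit once; this is precisely the bookkeeping the paper likewise delegates to \cite{myself}, so it does not affect the validity of the argument.
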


\begin{proof}
Recall that $E_{k,x}(\tau, s_0)$ is a linear combination of the spectral Eisenstein series on $\Gamma(N)$. Moreover, each spectral Eisenstein series on $\Gamma(N)$ is an explicit linear combination of Hecke's Eisenstein series on $\Gamma(N)$; see \cite[5]{myself}. Therefore, the assertion is an easy consequence of the theorem. 
\end{proof}

\begin{remark}
\label{remark3.7} One can use the above technique to write down Fourier expansion for the suitably regularized nonholomorphic Eisenstein series of weight $1, 2$ and verify that they lie in $\mathbb{C}[[q_N, \bar{q}_N]][v(\tau), \frac{1}{v(\tau)}]$. In terms of singularities, the elliptic series \eqref{3.8new} behave somewhat better that the Maass Eisenstein series exactly like the holomorphic case in Section~\ref{section3.2}. 
\end{remark}

\section{Rationality of periods}
\label{section4}
\subsection{$L$-function and $L$-values of the elliptic Eisenstein series}
\label{section4.1}
Our next job is explicitly writing down the completed $L$-function \eqref{2.7} attached to the elliptic Eisenstein series. Let $x \in \mathbb{R}$ and $0 < a \leq 1$. One defines the Lerch zeta function \cite{apostol} by
\begin{equation}
\label{4.1}
\phi(x, a, s) = \sum_{n \geq 0} \frac{\mathbf{e}(nx)}{(n+a)^s}.
\end{equation}
The series above converges uniformly on the compact subsets of $\{s \mid \text{Re}(s) > 1\}$ to define an analytic function in this region. The Lerch series admits a meromorphic continuation to the whole $s$-plane with a simple pole at $s = 1$ whenever $x$ is an integer. If $x$ is not an integer, then $\phi(x, a, \cdot)$ extends to an entire function on the $s$-plane. Our treatment requires two special avatars of the Lerch function. The Lerch series reduces to the familiar Hurwitz zeta function $\zeta(a,s)$ for $x = 0$, i.e., $\zeta(a,s) = \phi(0,a,s)$. The value of the Hurwitz zeta function at nonpositive integers \cite[12.11]{apostolbook} is given by  
\begin{equation}
\label{4.2}
\zeta(a, -n) = - \frac{B_{n+1}(a)}{n+1}. \hspace{.3cm}(\substack{0 < a \leq 1,\; n \geq 0})
\end{equation}	
On the other hand setting $a = 1$ in \eqref{4.1} yields the polylogarithm function given by the convergent series \[\text{Li}_{s}\big(\mathbf{e}(x)\big) := \sum_{n \geq 1} \frac{\mathbf{e}(nx)}{n^s} = \mathbf{e}(x) \phi(x,1,s)\] in the region $\text{Re}(s) > 1$. The meromorphic continuation of the Lerch function provides a meromorphic continuation for $\text{Li}$. If $x$ is not an integer then $\text{Li}_1\big(\mathbf{e}(x)\big)$ equals the conditionally convergent logarithmic series $\sum_{n \geq 1} \frac{\mathbf{e}(nx)}{n}$. The Fourier expansion formula for the Bernoulli polynomials \eqref{3.5new} shows that 
\begin{equation}
\label{4.3}
\text{Li}_{k}\big(\mathbf{e}(x)\big) + (-1)^k \text{Li}_{k}\big(\mathbf{e}(-x)\big) = - \frac{(2\pi i)^k}{k!} B_{k}(x)\hspace{.3cm}(\substack{0 \leq x \leq 1})
\end{equation}
for each $k \geq 2$. The identity above is valid even for $k = 1$ if $0 < x < 1$.

\begin{lemma}
\label{lemma4.1}
Let the notation be as in Section~\ref{section3.2}. Suppose that $k \geq 2$ and $\bar{\lambda} \in \mathcal{I}_{N,k}$. Now suppose $(\lambda_1, \lambda_2)$ is a lift of $\bar{\lambda}$ that satisfies $0 \leq \lambda_1 < N$. Then
\begin{equation}
\label{4.4}
\begin{aligned}
& \frac{(2\pi)^s}{\Gamma(s)}L^{\ast}\big(e_{k}(\bar{\lambda},N),s\big)\\
& = \frac{(-2\pi i)^k}{(k-1)!}\big\{\emph{Li}_{s}\big(\mathbf{e}(\frac{\lambda_2}{N})\big) \zeta^{\ast}(\frac{\lambda_1}{N}, s-k+1) +\\
& \hspace{3cm}(-1)^k\emph{Li}_{s}\big(\mathbf{e}(-\frac{\lambda_2}{N})\big) \zeta(1-\frac{\lambda_1}{N}, s-k+1)\big\}
\end{aligned}
\end{equation}
where $\zeta^{\ast}(\cdot, \cdot)$ is as in Theorem~\ref{theorem3.4}.  
\end{lemma}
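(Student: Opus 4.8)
The plan is to substitute the Fourier expansion of $e_{k}(\bar\lambda, N)$ from Lemma~\ref{lemma3.1} into the defining integral~\eqref{2.7} of the completed $L$-function and to recognise the resulting series as a product of a polylogarithm and a Hurwitz zeta value. First I would note that for $\bar\lambda \in \mathcal{I}_{N,k}$ the series $e_{k}(\bar\lambda, N)$ is holomorphic: the term $\mathbbm{1}_{\mathbb{Z}}(k,2)\,\mathbbm{1}_{\Lambda/N\Lambda}(\bar\lambda, 0)\,\tfrac{\pi}{v(\tau)}$ in Lemma~\ref{lemma3.1} vanishes, since either $k \geq 3$, or $k = 2$ and $\bar\lambda \neq 0$. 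Hence $e_{k}(\bar\lambda,N) \in \mathbb{C}\mathfrak{F}_{\infty}$ with constant Fourier term $\bigl(e_{k}(\bar\lambda,N)\bigr)_{\infty} = -\tfrac{(-2\pi i)^{k}}{k!}B_{k}(\tfrac{\lambda_1}{N})$, so that $e_{k}(\bar\lambda,N)(it) - \bigl(e_{k}(\bar\lambda,N)\bigr)_{\infty}$ equals exactly the sum of the two exponential series displayed in Lemma~\ref{lemma3.1}.

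Next I would work in the half-plane $\text{Re}(s) > k$, where the integral in~\eqref{2.7} converges absolutely and the double series $\sum_{m,n \geq 1} n^{k-1}(mn)^{-\text{Re}(s)}$ converges, so that the double sums may be interchanged with $\int_{0}^{\infty}(\,\cdot\,)t^{s-1}\,dt$ by Fubini. Applying $\int_{0}^{\infty}\mathbf{e}\bigl(mn(it)/N\bigr)t^{s-1}\,dt = \Gamma(s)\bigl(N/2\pi mn\bigr)^{s}$, the $m$-summation contributes the polylogarithms $\text{Li}_{s}\bigl(\mathbf{e}(\pm\tfrac{\lambda_2}{N})\bigr)$ and the $n$-summation contributes the arithmetic progression sums $\sum_{n \geq 1,\, n \equiv \pm\lambda_1 (N)} n^{k-1-s}$. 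A short case analysis identifies the latter with Hurwitz zeta values: when $0 < \lambda_1 < N$ one writes $n = N(j+\tfrac{\lambda_1}{N})$ with $j \geq 0$ to get $N^{k-1-s}\zeta(\tfrac{\lambda_1}{N}, s-k+1)$, and $n = N(j+1-\tfrac{\lambda_1}{N})$ with $j \geq 0$ to get $N^{k-1-s}\zeta(1-\tfrac{\lambda_1}{N}, s-k+1)$; when $\lambda_1 = 0$ both progressions are $N, 2N, 3N,\dots$ and one obtains $N^{k-1-s}\zeta(1,s-k+1)$, which matches the convention $\zeta^{\ast}(\tfrac{\lambda_1}{N}, s-k+1)$ of Theorem~\ref{theorem3.4} in the first slot and $\zeta(1-\tfrac{\lambda_1}{N},s-k+1)$ in the second.

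Finally I would collect the constants: the factor $N^{-(k-1)}$ from the Fourier coefficients of Lemma~\ref{lemma3.1}, the factor $N^{s}$ from the Gamma integral, and the factor $N^{k-1-s}$ from the $n$-sum cancel; rewriting $(2\pi i)^{k} = (-1)^{k}(-2\pi i)^{k}$ for the second exponential series produces the sign $(-1)^{k}$; and multiplying through by $(2\pi)^{s}/\Gamma(s)$ yields the identity~\eqref{4.4} for $\text{Re}(s) > k$. Since $L^{\ast}\bigl(e_{k}(\bar\lambda,N),s\bigr)$ is meromorphic on $\mathbb{C}$ (the discussion following~\eqref{2.7}) and the right-hand side of~\eqref{4.4} is meromorphic by the continuation properties of the polylogarithm and Hurwitz zeta functions, the equality propagates to all $s$ by analytic continuation. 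The only delicate points are the justification of the interchange of summation and integration and the edge case $\lambda_1 = 0$ in the reduction to Hurwitz zeta; identity~\eqref{4.3} can serve as a consistency check on the constant term but is not needed for the derivation itself.
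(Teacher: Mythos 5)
Your proposal is correct and follows the same route as the paper: substitute the Fourier expansion of Lemma~\ref{lemma3.1} into the integral representation \eqref{2.7} for $\mathrm{Re}(s)>k$, evaluate term by term via the Gamma integral, and extend to all $s$ by analytic continuation. The paper states this only in outline, and your verification of the Fubini step, the reduction of the arithmetic-progression sums to Hurwitz zeta values (including the $\lambda_1=0$ convention behind $\zeta^{\ast}$), and the cancellation of the powers of $N$ all check out.
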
     

\begin{proof}
We first assume that $\text{Re}(s) > k$. Then the completed $L$-function admits a convergent integral representation and the identity \eqref{4.4} is a straightforward consequence of Lemma~\ref{lemma3.1}. Observe that both sides of \eqref{4.4} meromorphically continue to the whole plane. Thus, the identity holds due to the principle of analytic continuation.  
\end{proof}

\begin{proposition}
\label{proposition4.2}
Let the notation be as in Lemma~\ref{lemma4.1}. We further assume that $0 \leq \lambda_2 < N$. Suppose that $1 \leq r \leq k-1$. Then 
\begin{equation*}
\begin{aligned}
L^{\ast}\big(e_{k}(\bar{\lambda},N), r\big) & = i^{r}\frac{(-2\pi i)^k}{(k-1)!} \frac{B_{k-r}(\frac{\lambda_1}{N})B_{r}(\frac{\lambda_2}{N})}{r(k-r)}\\
& + (-1)^{k-1} i^{k} \mathbbm{1}_{\mathbb{Z}/N\mathbb{Z}}(\lambda_1, 0)\mathbbm{1}_{\mathbb{Z}}(r,k-1)\frac{2\pi}{k-1}\emph{Li}_{k-1}\big(\mathbf{e}(\frac{\lambda_2}{N})\big)\\
& -\mathbbm{1}_{\mathbb{Z}/N\mathbb{Z}}(\lambda_2,0)\mathbbm{1}_{\mathbb{Z}}(r,1)\frac{2\pi}{k-1}\emph{Li}_{k-1}\big(\mathbf{e}(-\frac{\lambda_1}{N})\big). 
\end{aligned}
\end{equation*}  
\end{proposition}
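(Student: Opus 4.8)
The plan is to substitute the integer argument $s=r$ into the meromorphic identity of Lemma~\ref{lemma4.1}. Since $L^{\ast}(f,s)=(2\pi)^{-s}\Gamma(s)L(f,s)$ by \eqref{2.7}, the left-hand side of \eqref{4.4} is exactly $L\big(e_k(\bar\lambda,N),s\big)$, so $L^{\ast}\big(e_k(\bar\lambda,N),r\big)$ is recovered by multiplying the right-hand side of \eqref{4.4} by $\Gamma(r)/(2\pi)^r=(r-1)!/(2\pi)^r$ and setting $s=r$. This is legitimate throughout $1\le r\le k-1$ except in the single degenerate situation $\lambda_2\equiv 0\pmod N$, $r=1$, where the two $\text{Li}_s$ factors each acquire a pole at $s=1$; that case I would handle last by a different route.

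For the main computation I would use that $r-k+1=-(k-1-r)$ is a nonpositive integer, so \eqref{4.2} rewrites the Hurwitz zeta factors through Bernoulli polynomials; together with the reflection $B_n(1-t)=(-1)^nB_n(t)$ (immediate from the generating function of $B_\bullet$) this yields $\zeta(1-\tfrac{\lambda_1}{N},r-k+1)=-(-1)^{k-r}B_{k-r}(\tfrac{\lambda_1}{N})/(k-r)$ and $\zeta^{\ast}(\tfrac{\lambda_1}{N},r-k+1)=-B_{k-r}(\tfrac{\lambda_1}{N})/(k-r)$. The one subtle point is that when $k-r=1$ and $\lambda_1=0$ the convention $\zeta^{\ast}(0,\cdot)=\zeta(1,\cdot)$ forces $\zeta^{\ast}(0,0)=-B_1(1)=-\tfrac12$ rather than $-B_1(0)=\tfrac12$, and this mismatch is exactly what manufactures the extra polylogarithmic terms.

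I would then split into three cases. \textbf{Generic case} (assuming $(\lambda_1,r)\neq(0,k-1)$ and $(\lambda_2,r)\neq(0,1)$): both Bernoulli identities above hold as written --- for $\lambda_1=0$ harmlessly, since then $k-r\ge2$ and $B_{k-r}(0)=B_{k-r}(1)$ --- so factoring $-B_{k-r}(\tfrac{\lambda_1}{N})/(k-r)$ out of the braces and using $(-1)^k(-1)^{k-r}=(-1)^r$ collapses them to $\text{Li}_r(\mathbf{e}(\tfrac{\lambda_2}{N}))+(-1)^r\text{Li}_r(\mathbf{e}(-\tfrac{\lambda_2}{N}))$, which equals $-\tfrac{(2\pi i)^r}{r!}B_r(\tfrac{\lambda_2}{N})$ by \eqref{4.3}; the hypothesis $\lambda_2\ne 0$, which must hold here when $r=1$, is what makes \eqref{4.3} applicable in the exponent-one case. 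Collecting constants via $(2\pi i)^r(2\pi)^{-r}=i^r$ and $(r-1)!/r!=1/r$ gives the first term of the proposition, and both indicator terms vanish. \textbf{Case $\lambda_1=0$, $r=k-1$}: here $\zeta^{\ast}(0,0)=\zeta(1,0)=-\tfrac12$, so the braces equal $-\tfrac12\big(\text{Li}_{k-1}(\mathbf{e}(\tfrac{\lambda_2}{N}))+(-1)^k\text{Li}_{k-1}(\mathbf{e}(-\tfrac{\lambda_2}{N}))\big)$, and rewriting this sum as $2\,\text{Li}_{k-1}(\mathbf{e}(\tfrac{\lambda_2}{N}))+\tfrac{(2\pi i)^{k-1}}{(k-1)!}B_{k-1}(\tfrac{\lambda_2}{N})$ with \eqref{4.3} splits $L^{\ast}$ into a $\text{Li}_{k-1}$ part --- which simplifies to the second term of the proposition --- and a Bernoulli part equal, using $B_1(0)=-\tfrac12$, to the first term.

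\textbf{Case $\lambda_2=0$, $r=1$}: I would reduce this to the previous case via the functional equation $L^{\ast}(f,s)=i^kL^{\ast}(f\lvert_S,k-s)$ together with $e_k(\bar\lambda,N)\lvert_S=e_k(\bar\lambda S,N)$ and $\bar\lambda S=\overline{(\lambda_2,-\lambda_1)}=\overline{(0,-\lambda_1)}$, so that $L^{\ast}\big(e_k(\bar\lambda,N),1\big)=i^kL^{\ast}\big(e_k(\overline{(0,-\lambda_1)},N),k-1\big)$ is covered by the case just done; substituting $B_{k-1}(1-\tfrac{\lambda_1}{N})=(-1)^{k-1}B_{k-1}(\tfrac{\lambda_1}{N})$ and $\text{Li}_{k-1}(\mathbf{e}(1-\tfrac{\lambda_1}{N}))=\text{Li}_{k-1}(\mathbf{e}(-\tfrac{\lambda_1}{N}))$ then produces the first and third terms. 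I expect the main obstacle to be not conceptual but the sign and constant bookkeeping in the two boundary cases --- in particular keeping $B_1(0)=-\tfrac12$ and $B_1(1)=\tfrac12$ straight, and tracking which uses of \eqref{4.3} need the strict range $0<t<1$.
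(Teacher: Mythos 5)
Your proposal is correct and follows essentially the same route as the paper: the generic case via \eqref{4.2}--\eqref{4.5} and \eqref{4.3}, the boundary case $\lambda_1=0$, $r=k-1$ via the $B_1(1)$ versus $B_1(0)$ discrepancy in $\zeta^{\ast}$, and the case $\lambda_2=0$, $r=1$ via the functional equation $L^{\ast}(f,s)=i^kL^{\ast}(f\lvert_S,k-s)$. The sign and constant bookkeeping in your sketch checks out, including the reduction $i^kL^{\ast}\big(e_k(\overline{(0,-\lambda_1)},N),k-1\big)=(-i)^kL^{\ast}\big(e_k(\overline{(0,\lambda_1)},N),k-1\big)$ implicit in matching the paper's formula.
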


The proof of the identity involves the following standard properties of the Bernoulli polynomials \cite[4.5]{shimura07}: 
\begin{equation}
\label{4.5}
\begin{gathered}
B_{n}(1-t) = (-1)^nB_n(t), \hspace{.3cm}(\substack{0 \leq t \leq 1,\; n \geq 0})\\
B_n(1+t) = B_n(t) + nt^{n-1}.\hspace{.3cm}(\substack{0 \leq t \leq 1,\; n \geq 1})
\end{gathered}
\end{equation}
In particular \[B_{n}(1) = \begin{cases}
	B_n(0), & \text{if $n \geq 2$;}\\
	B_n(0)+1, &\text{if $n=1$.}
\end{cases}\] 

\begin{proof}
If $0 < \lambda_1, \lambda_2 < N$ then the identity is an easy consequence of the formulas given in \eqref{4.2}-\eqref{4.5}:
\begin{align*}
& L^{\ast}\big(e_k(\bar{\lambda},N), r\big)\\
& = - \frac{(r-1)!}{(2\pi)^r} \frac{(-2\pi i)^k}{(k-1)!(k-r)} \Big\{\text{Li}_r\big(\mathbf{e}(\frac{\lambda_2}{N})\big) B_{k-r}(\frac{\lambda_1}{N}) +\\ 
& \hspace{6cm}(-1)^k \text{Li}_r\big(\mathbf{e}(-\frac{\lambda_2}{N})\big) B_{k-r}(1-\frac{\lambda_1}{N})\Big\}\\
& = i^r \frac{(-2\pi i)^k}{(k-1)!} \frac{B_{k-r}(\frac{\lambda_1}{N})B_r(\frac{\lambda_2}{N})}{r(k-r)}. 
\end{align*}   
The extremal case of \eqref{4.5} together with \eqref{4.3} guarantee that the calculation for $0 < \lambda_1, \lambda_2 < N$ extends to the following situations without any difficulty: 
\begin{itemize}
\item all $(\lambda_1, \lambda_2)$ if $2 \leq r \leq k-2$,
\item $\{(0, \lambda_2) \mid \lambda_2 \neq 0\}$ if $r = 1$, $\{(\lambda_1, 0) \mid \lambda_1 \neq 0\}$ if $r = k- 1$ for $k \geq 3$.
\end{itemize} 
Suppose that $r = k-1$ and $\lambda_1 = 0$. The hypothesis on $\bar{\lambda}$ implies that if $k = 2$ then $\lambda_2$ is automatically nonzero. In this situation $\zeta^{\ast}$ contributes $B_1(1)$ instead of $B_{1}(0)$ and this discrepancy gives rise to the desired term involving $\text{Li}_{k-1}\big(\mathbf{e}(\frac{\lambda_2}{N})\big)$. It remains to verify the identity for $r = 1$ and $\lambda_2 = 0$. As before, if $k = 2$ then $\lambda_1 \neq 0$. The functional equation for the completed $L$-function implies that 
\begin{align*}
& L^{\ast}\big(e_{k}\big(\overline{(\lambda_1, 0)}, N\big),1\big)\\
& = (-i)^kL^{\ast}\big(e_{k}\big(\overline{(0, \lambda_1)},N\big),k-1\big)\\
& =  i \frac{(-2\pi i)^k}{(k-1)!}\frac{B_{k-1}(\frac{\lambda_1}{N}) B_1(0)}{k-1} - \frac{2\pi}{k-1} \text{Li}_{k-1}\big(\mathbf{e}(-\frac{\lambda_1}{N})\big). 
\end{align*}
Here, in the last step, we are using \eqref{4.3} to simplify the expression.  
\end{proof}

\subsection{Descent of cohomology classes}
\label{section4.2}
Let the notation be as in Section~\ref{section3.2}.  The explicit expressions for the special values of the completed $L$ function in Proposition~\ref{proposition4.2} motivates us to normalize the elliptic Eisenstein series as 
\begin{equation*}
\mathfrak{e}_{k}(\bar{\lambda},N):= \frac{1}{(-2\pi i)^k} e_{k}(\bar{\lambda},N). \hspace{.3cm}(\substack{\bar{\lambda} \in \Lambda/N\Lambda})
\end{equation*}
The following lemma computes the value of the equivariant period function \eqref{2.1} attached to the base point at infinity for $\Gamma$-stable subset $\mathcal{M}_k\big(\Gamma(N)\big)$ at the normalized elliptic Eisenstein series. 

\begin{lemma}
\label{lemma4.3}
Let $k \geq 2$ and $\bar{\lambda} \in \mathcal{I}_{N,k}$. Suppose that $(\lambda_1, \lambda_2)$ is a lift of $\bar{\lambda}$ satisfying $0 \leq \lambda_1, \lambda_2 < N$. Then 
\begin{align*}
& \mathsf{p}_{k}^{\ast}(T) \big(\mathfrak{e}_k(\bar{\lambda},N)\big)  = - \frac{B_k(\frac{\lambda_1}{N})}{k!(k-1)}\big((X+1)^{k-1} - X^{k-1}\big),\\
& \mathsf{p}_{k}^{\ast}(S) \big(\mathfrak{e}_k(\bar{\lambda},N)\big)  = - \sum_{r=0}^{k-2} \binom{k}{r+1} \frac{B_{k-r-1}(\frac{\lambda_1}{N})B_{r+1}(\frac{\lambda_2}{N})}{k!(k-1)}X^{k-2-r} + (-1)^{k-1}\\
& \mathbbm{1}_{\mathbb{Z}/N\mathbb{Z}}(\lambda_1, 0)\frac{\emph{Li}_{k-1}\big(\mathbf{e}(\frac{\lambda_2}{N})\big)}{(k-1)(-2\pi i)^{k-1}} + \mathbbm{1}_{\mathbb{Z}/N\mathbb{Z}}(\lambda_2, 0)\frac{\emph{Li}_{k-1}\big(\mathbf{e}(-\frac{\lambda_1}{N})\big)}{(k-1)(-2\pi i)^{k-1}}X^{k-2}. 
\end{align*}	
\end{lemma}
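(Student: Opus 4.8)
The plan is to derive both identities by specializing Proposition~\ref{proposition2.13} to the $\Gamma$-stable family $\mathcal{M}_k\big(\Gamma(N)\big)$ with coset representative $\sigma = \mathrm{Id}$, and then substituting the two explicit inputs that are already in hand: the Fourier constant term of $e_k(\bar\lambda,N)$ from Lemma~\ref{lemma3.1} and the special $L$-values $L^{\ast}\big(e_k(\bar\lambda,N),r\big)$ from Proposition~\ref{proposition4.2}. A preliminary observation is that for $\bar\lambda \in \mathcal{I}_{N,k}$ the normalized series $\mathfrak{e}_k(\bar\lambda,N)$ is a genuine holomorphic modular form on $\Gamma(N)$: the only potentially non-holomorphic contribution in Lemma~\ref{lemma3.1} carries the factor $\mathbbm{1}_{\mathbb{Z}}(k,2)\mathbbm{1}_{\Lambda/N\Lambda}(\bar\lambda,0)$, which vanishes identically on $\mathcal{I}_{N,k}$, so $\mathfrak{e}_k(\bar\lambda,N) \in \mathcal{M}_k\big(\Gamma(N)\big)$ and Proposition~\ref{proposition2.13} is applicable.

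For the value at $T$: Lemma~\ref{lemma3.1} with the chosen lift $0 \le \lambda_1 < N$ shows that the constant term of $e_k(\bar\lambda,N)$ is $-\frac{(-2\pi i)^k}{k!}B_k(\lambda_1/N)$, hence $\big(\mathfrak{e}_k(\bar\lambda,N)\big)_\infty = -B_k(\lambda_1/N)/k!$. Substituting this into the first formula of Proposition~\ref{proposition2.13} with $\sigma = \mathrm{Id}$ produces the claimed expression for $\mathsf{p}_k^{\ast}(T)\big(\mathfrak{e}_k(\bar\lambda,N)\big)$ immediately.

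For the value at $S$: the second formula of Proposition~\ref{proposition2.13} reads $\mathsf{p}_k^{\ast}(S)\big(\mathfrak{e}_k(\bar\lambda,N)\big) = \sum_{r=0}^{k-2} i^{1-r}\binom{k-2}{r}\, L^{\ast}\big(\mathfrak{e}_k(\bar\lambda,N),r+1\big)\, X^{k-2-r}$, and since $L^{\ast}\big(\mathfrak{e}_k(\bar\lambda,N),s\big) = (-2\pi i)^{-k}\,L^{\ast}\big(e_k(\bar\lambda,N),s\big)$, I would insert Proposition~\ref{proposition4.2} with its running index replaced by $r+1$ (legitimate, since $1 \le r+1 \le k-1$). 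This splits the sum into a principal part built from $B_{k-r-1}(\lambda_1/N)\,B_{r+1}(\lambda_2/N)$ and two boundary contributions involving $\mathrm{Li}_{k-1}$. In the principal part the powers of $i$ collapse via $i^{1-r}\cdot i^{r+1} = i^2 = -1$ (producing the global minus sign), the factor $(-2\pi i)^k$ coming out of Proposition~\ref{proposition4.2} cancels the normalizing $(-2\pi i)^{-k}$, and it only remains to check the elementary identity $\binom{k-2}{r}\frac{1}{(k-1)!\,(r+1)(k-r-1)} = \binom{k}{r+1}\frac{1}{k!\,(k-1)}$, both sides of which equal $\frac{1}{(r+1)!\,(k-r-1)!\,(k-1)}$ after cancellation.

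The step that genuinely needs care is the two boundary contributions. The correction term in Proposition~\ref{proposition4.2} attached to $\mathbbm{1}_{\mathbb{Z}/N\mathbb{Z}}(\lambda_1,0)$ is supported at index $r+1 = k-1$, i.e. in the slot $r = k-2$ where the ambient monomial is $X^0$; combining the prefactor $i^{1-r}\binom{k-2}{r}(-2\pi i)^{-k}$ at $r = k-2$ with the explicit correction $(-1)^{k-1}i^{k}\frac{2\pi}{k-1}$ and telescoping the powers of $i$ and $2\pi i$ reduces it to $(-1)^{k-1}\frac{\mathrm{Li}_{k-1}(\mathbf{e}(\lambda_2/N))}{(k-1)(-2\pi i)^{k-1}}$. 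Likewise, the correction term attached to $\mathbbm{1}_{\mathbb{Z}/N\mathbb{Z}}(\lambda_2,0)$ is supported at index $r+1 = 1$, i.e. in the slot $r = 0$ with monomial $X^{k-2}$, and the analogous simplification gives $\frac{\mathrm{Li}_{k-1}(\mathbf{e}(-\lambda_1/N))}{(k-1)(-2\pi i)^{k-1}}X^{k-2}$. Matching these two terms with the asserted formula completes the proof. I expect the main obstacle to be purely bookkeeping: keeping the powers of $i$ and $2\pi i$ straight in these low-degree correction terms, and checking consistency in the degenerate case $k=2$, where the sum has only the slot $r=0$, which is simultaneously the $r+1 = 1$ and the $r+1 = k-1$ slot. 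There the hypothesis $\bar\lambda \in \mathcal{I}_{N,2}$, i.e. $\bar\lambda \ne 0$, rules out $\lambda_1 = \lambda_2 = 0$, so at most one boundary term is present and the general formula remains valid.
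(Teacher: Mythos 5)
Your proposal is correct and follows exactly the route the paper takes: the paper's proof of this lemma is a one-line reference to Proposition~\ref{proposition2.13}, Lemma~\ref{lemma3.1}, and Proposition~\ref{proposition4.2}, and you have carried out precisely that combination, with the binomial identity, the powers of $i$ and $2\pi i$, and the placement of the two $\mathrm{Li}_{k-1}$ correction terms all checking out. The preliminary holomorphy remark and the $k=2$ degeneracy check are sensible additions but do not change the argument.
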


\begin{proof}
The demonstration is a straightforward exercise using the formulas in Proposition~\ref{proposition2.13}, Lemma~\ref{lemma3.1}, and Proposition~\ref{proposition4.2}. 
\end{proof}

Observe that the value of the period polynomial at $S$ involves polylogarithm functions. Thus, it is difficult to examine the rationality properties of the period polynomials described above. We next explain how to change the Eichler-Shimura cocycle attached to a function by a coboundary so that the resulting cocycle has coefficients in $\mathbb{Q}$; cf. \cite[7]{zagier-gangl}. Let 
\begin{equation}
\label{4.6}
[\text{ES}_k] : \mathcal{M}_{k}\big(\Gamma(N)\big) \to H^{1}\big(\Gamma(N), V_{k-2,\mathbb{C}}\big)
\end{equation}  
be the Eichler-Shimura map attached to the space of modular forms.

\begin{proposition}
\label{proposition4.4new}
Let $k \geq 2$ and $\bar{\lambda} \in \mathcal{I}_{N,k}$. Then the image of $\mathfrak{e}_{k}(\bar{\lambda},N)$ under the Eichler-Shimura map \eqref{4.6} is defined over $\mathbb{Q}$.  
\end{proposition}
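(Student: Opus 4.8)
The goal is to exhibit a cocycle representative of $[\text{ES}_k]\big(\mathfrak{e}_k(\bar\lambda,N)\big)$ with values in $V_{k-2,\mathbb{Q}}$, i.e.\ to kill the polylogarithmic terms appearing in Lemma~\ref{lemma4.3} by subtracting an explicit coboundary. Since a cocycle on $\Gamma(N)$ is awkward to write down directly, I would pass to the induced picture of Section~\ref{section2.3}: via the Shapiro isomorphism $\text{sh}^1$ it suffices to modify the $\Gamma$-cocycle $\text{Ind-ES}_k^{*}\big(\mathfrak{e}_k(\bar\lambda,N)\big)$ with values in $\text{Fun}\big(\Gamma(N)\backslash\Gamma, V_{k-2,\mathbb{C}}\big)$ by a coboundary $P\mapsto P\lvert_\gamma - P$ for a suitable $P\in\text{Fun}\big(\Gamma(N)\backslash\Gamma, V_{k-2,\mathbb{C}}\big)$, so that the modified cocycle takes values in $\text{Fun}\big(\Gamma(N)\backslash\Gamma, V_{k-2,\mathbb{Q}}\big)$. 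Because $\Gamma$ is generated by $T$ and $S$, a $\Gamma$-cocycle is determined by its values at $T$ and $S$, subject only to the relations coming from $S^4=1$ and $(ST)^3=1$; so concretely I must produce the function $P$ and check rationality of the two modified values $\mathsf{p}_k^{*}(T)(\mathfrak{e}_k(\bar\lambda\sigma,N))$ and $\mathsf{p}_k^{*}(S)(\mathfrak{e}_k(\bar\lambda\sigma,N))$ after the correction, for all coset representatives $\sigma$.

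\textbf{Constructing the coboundary.} The transcendental part of $\mathsf{p}_k^{*}(S)\big(\mathfrak{e}_k(\bar\lambda,N)\big)$ is, by Lemma~\ref{lemma4.3}, a rational multiple of
\[
\frac{\text{Li}_{k-1}\big(\mathbf{e}(\tfrac{\lambda_2}{N})\big)}{(-2\pi i)^{k-1}}\quad\text{(when }\lambda_1\equiv 0\text{)}\qquad\text{and}\qquad \frac{\text{Li}_{k-1}\big(\mathbf{e}(-\tfrac{\lambda_1}{N})\big)}{(-2\pi i)^{k-1}}\,X^{k-2}\quad\text{(when }\lambda_2\equiv 0\text{)}.
\]
The natural guess for $P$ is a diagonal function built from these same polylogarithm values: I would set $P(\bar\lambda)$ to be a $V_{k-2,\mathbb{C}}$-valued quantity of the shape $a(\bar\lambda) + b(\bar\lambda)X^{k-2}$ where $a(\bar\lambda)$ is proportional to $\text{Li}_{k-1}\big(\mathbf{e}(\tfrac{\lambda_2}{N})\big)/(-2\pi i)^{k-1}$ supported on $\{\lambda_1\equiv 0\}$ and $b(\bar\lambda)$ proportional to $\text{Li}_{k-1}\big(\mathbf{e}(-\tfrac{\lambda_1}{N})\big)/(-2\pi i)^{k-1}$ supported on $\{\lambda_2\equiv 0\}$, then transported to $\text{Fun}\big(\Gamma(N)\backslash\Gamma,V_{k-2,\mathbb{C}}\big)$ via $\Gamma(N)\sigma\mapsto P(\bar\lambda\sigma)$ using $\mathfrak{e}_k(\bar\lambda,N)\lvert_\sigma=\mathfrak{e}_k(\bar\lambda\sigma,N)$. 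I then compute the coboundary $\partial P$ evaluated at $T$ and at $S$. The action of $T=\begin{pmatrix}1&1\\0&1\end{pmatrix}$ on $\Lambda/N\Lambda$ and on $V_{k-2}$ is essentially trivial in the relevant entries, and the action of $S$ swaps the roles of $\lambda_1$ and $\lambda_2$ and sends $X^{j}\mapsto X^{k-2-j}$ up to sign; this is precisely the symmetry that matches the two transcendental terms, so $\partial P$ at $S$ should produce exactly (the negative of) the polylog contribution while $\partial P$ at $T$ contributes something whose transcendental part also cancels — here one has to be careful since $\mathsf{p}_k^{*}(T)(\mathfrak{e}_k)$ is already rational by Lemma~\ref{lemma4.3}, so $\partial P(T)$ must have vanishing transcendental part, which forces a compatibility (essentially $\text{Li}_{k-1}$ being $T$-invariant on the support, since $\mathbf{e}(\tfrac{\lambda_2}{N})$ is unchanged when $\lambda_1\mapsto\lambda_1+\lambda_2$ only modifies... ) — this bookkeeping is the delicate point.

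\textbf{Main obstacle and verification.} The crux is twofold. First, one must choose the rational normalizing constants in $P$ so that the transcendental parts cancel \emph{simultaneously} at $T$ and at $S$ for every coset $\sigma$ — this is a finite but genuine linear-algebra compatibility, and it is exactly where the observation of Zagier–Gangl \cite[7]{zagier-gangl}, here generalized, does its work; I would verify it by a direct computation using the explicit $\Gamma$-action on $\Lambda/N\Lambda$ together with the distribution/reflection properties of polylogarithms implicit in \eqref{4.3}. Second, one must then check that the remaining, purely algebraic, values lie in $V_{k-2,\mathbb{Q}}$: by Lemma~\ref{lemma4.3} the leftover terms are rational combinations of products $B_{k-r-1}(\tfrac{\lambda_1}{N})B_{r+1}(\tfrac{\lambda_2}{N})$ and of $B_k(\tfrac{\lambda_1}{N})$, and since $B_n(\tfrac{a}{N})\in\mathbb{Q}$ for $a,N\in\mathbb{Z}$ (the Bernoulli polynomials have rational coefficients), these are manifestly rational, so the modified cocycle indeed has values in $\text{Fun}\big(\Gamma(N)\backslash\Gamma,V_{k-2,\mathbb{Q}}\big)$. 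Transporting back through $\text{sh}^1$ — which is defined over $\mathbb{Q}$ — yields a cocycle on $\Gamma(N)$ valued in $V_{k-2,\mathbb{Q}}$ representing $[\text{ES}_k]\big(\mathfrak{e}_k(\bar\lambda,N)\big)$, proving the claim. The main obstacle is purely the simultaneous cancellation at $T$ and $S$; once the correction term is pinned down, rationality of the remainder is immediate from the Bernoulli-polynomial description.
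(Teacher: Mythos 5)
Your proposal is correct and takes essentially the same route as the paper: pass to the induced picture via Shapiro, subtract an explicit coboundary built from the polylogarithmic part of the special $L$-values to kill the transcendental terms of Lemma~\ref{lemma4.3}, and observe that what remains is rational by the Bernoulli formulas. The paper's concrete choice resolving the ``delicate point'' you leave open is the single constant-valued function $F\big(\Gamma(N)\sigma\big)=i^{3-k}L^{\ast}\big(\mathfrak{e}_{k}(\bar{\lambda}\sigma,N),k-1\big)$, whose $S$-translate automatically supplies the $X^{k-2}$ polylogarithm term through the functional equation $L^{\ast}(f,s)=i^{k}L^{\ast}(f\lvert_{S},k-s)$, together with a separately truncated $F$ (supported on $\lambda_1^{\sigma}=0$) in the case $k=2$, where both transcendental contributions can otherwise interfere.
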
 

\begin{proof}
We use the induced picture of Section~\ref{section2.3} to compute the Eichler-Shimura map. In the light of the explicit Shapiro map \eqref{2.5} it suffices to check that the image of $\mathfrak{e}_{k}(\bar{\lambda},N)$ under $\text{Ind-}[\text{ES}_k]$ admits a cocyle representative in $Z^{1}\big(\Gamma, \text{Fun}(\Gamma(N)\backslash \Gamma, V_{k-2,\mathbb{Q}})\big)$. For $\sigma \in \Gamma$ let $(\lambda_1^{\sigma}, \lambda_2^{\sigma})$ denote the unique lift of $\bar{\lambda}\sigma$ in $[0,N) \times [0,N)$. Note that $(\lambda_1^{\sigma}, \lambda_2^{\sigma})$ depends only on the coset of $\sigma$ in $\Gamma(N) \backslash \Gamma$. 
	
\textbf{Case I.} $k \geq 3$.
	
Define $F \in \text{Fun}\big(\Gamma(N) \backslash \Gamma, V_{k-2,\mathbb{C}}\big)$ by setting \[F\big(\Gamma(N)\sigma\big) = i^{3-k} L^{\ast}\big(\mathfrak{e}_{k}(\bar{\lambda}\sigma, N), k-1\big).\]
A straight-forward computation using the formulas in Proposition~\ref{proposition4.2} and Lemma~\ref{lemma4.3} shows that the modified cocycle
\begin{equation*}
\Gamma \to \text{Fun}\big(\Gamma(N)\backslash \Gamma, V_{k-2,\mathbb{C}}\big),\; \gamma \mapsto \text{Ind-}\text{ES}^{\ast}_{k}(\mathfrak{e}_k(\bar{\lambda},N))(\gamma) + F\lvert_{\gamma} - F, 
\end{equation*}
takes values in  $\text{Fun}\big(\Gamma(N)\backslash \Gamma, V_{k-2,\mathbb{Q}}\big)$ at $T$ and $S$.
	
\textbf{Case II.} $k=2$.
	
The problem, in this case, is that both the transcendental terms in the special $L$-value may simultaneously contribute a term to the calculation. We construct another function $F \in \text{Fun}\big(\Gamma(N) \backslash \Gamma, \mathbb{C}\big)$ by putting 
\[F\big(\Gamma(N)\sigma\big) = \begin{cases}
		0, & \text{if $\lambda_1^{\sigma} \neq 0$;}\\
		i L^{\ast}\big(\mathfrak{e}_2(\bar{\lambda}\sigma, N), 1\big), & \text{if $\lambda_1^{\sigma} = 0$.}
	\end{cases}\]  
Then the modified cocycle 
\begin{equation*}
\Gamma \to \text{Fun}\big(\Gamma(N)\backslash \Gamma, \mathbb{C}\big),\; \gamma \mapsto \text{Ind-}\text{ES}^{\ast}_{2}(\mathfrak{e}_2(\bar{\lambda},N))(\gamma) + F\lvert_{\gamma} - F, 
\end{equation*}
again takes values in $\text{Fun}\big(\Gamma(N)\backslash \Gamma, \mathbb{Q}\big)$ at $T$ and $S$.  
\end{proof}

Let $\mathbb{K}$ be a subfield of $\mathbb{C}$ containing $\mathbb{Q}(\mu_N)$. Recall that the $\mathbb{K}$-span of $\{e_{k}(\bar{\lambda},N) \mid \bar{\lambda} \in \mathcal{I}_{N,k}\}$ equals $(2\pi i)^{k}\mathcal{E}_{k}\big(\Gamma(N), \mathbb{K}\big)$. Therefore the collection $\{\mathfrak{e}_k(\bar{\lambda},N) \mid \bar{\lambda} \in \mathcal{I}_{N,k}\}$ is a $\mathbb{K}$-spanning set of $\mathcal{E}_{k}\big(\Gamma(N), \mathbb{K}\big)$.    

\begin{corollary}
\label{corollary4.5}
The homomorphism \eqref{4.6} maps $\mathcal{E}_k\big(\Gamma(N),\mathbb{K}\big)$ inside the canonical $\mathbb{K}$-form $H^{1}\big(\Gamma(N), V_{k-2,\mathbb{K}}\big)$.   
\end{corollary}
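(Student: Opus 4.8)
The plan is to deduce the statement directly from Proposition~\ref{proposition4.4new}, combining it with the linearity of the Eichler--Shimura morphism and the evident compatibility of the canonical rational structures with enlarging the coefficient field. As recorded in the paragraph immediately preceding this corollary, since $\mathbb{Q}(\mu_N) \subseteq \mathbb{K}$ the family $\{\mathfrak{e}_k(\bar\lambda, N) \mid \bar\lambda \in \mathcal{I}_{N,k}\}$ is a $\mathbb{K}$-spanning set for $\mathcal{E}_k(\Gamma(N), \mathbb{K})$. Hence every element of $\mathcal{E}_k(\Gamma(N), \mathbb{K})$ is a $\mathbb{K}$-linear combination of the $\mathfrak{e}_k(\bar\lambda, N)$, and it suffices to check that $[\text{ES}_k]$ sends each such combination into the canonical $\mathbb{K}$-form $H^1(\Gamma(N), V_{k-2,\mathbb{K}})$.

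First I would observe that $[\text{ES}_k]$ is $\mathbb{K}$-linear, indeed $\mathbb{C}$-linear, on the $\mathbb{C}$-vector space $\mathcal{M}_k(\Gamma(N))$; this was recorded in Section~\ref{section2.1}, where it follows from computing the morphism via a canonical choice function. Next, by Proposition~\ref{proposition4.4new} each $[\text{ES}_k](\mathfrak{e}_k(\bar\lambda, N))$ admits a cocycle representative with values in $V_{k-2,\mathbb{Q}}$; since $\mathbb{Q} \subseteq \mathbb{K}$ we have $V_{k-2,\mathbb{Q}} \subseteq V_{k-2,\mathbb{K}}$, so the very same cocycle exhibits $[\text{ES}_k](\mathfrak{e}_k(\bar\lambda, N))$ as a class defined over $\mathbb{K}$. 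Finally, the classes defined over $\mathbb{K}$ form a $\mathbb{K}$-subspace of $H^1(\Gamma(N), V_{k-2,\mathbb{C}})$, because a $\mathbb{K}$-linear combination of $V_{k-2,\mathbb{K}}$-valued cocycles is again $V_{k-2,\mathbb{K}}$-valued. Putting these together, for $a_i \in \mathbb{K}$ one has $[\text{ES}_k]\big(\sum_i a_i \mathfrak{e}_k(\bar\lambda_i, N)\big) = \sum_i a_i\, [\text{ES}_k](\mathfrak{e}_k(\bar\lambda_i, N))$, which lies in $H^1(\Gamma(N), V_{k-2,\mathbb{K}})$, proving the corollary.

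The argument is essentially formal, and there is no serious obstacle: all the genuine arithmetic input — the explicit period polynomials, their transcendental polylogarithmic terms, and the Zagier--Gangl-type coboundary correction that removes them — is packaged into Proposition~\ref{proposition4.4new} together with the $L$-value computations of Section~\ref{section4.1}. The only point worth flagging is that one must be sure the notion of a cohomology class being defined over a subfield behaves well under the inclusions $\mathbb{Q} \subseteq \mathbb{K} \subseteq \mathbb{C}$, so that the passage from being defined over $\mathbb{Q}$ to being defined over $\mathbb{K}$ is legitimate; this is immediate from the cocycle-representative formulation of the definition in Section~\ref{section1}.
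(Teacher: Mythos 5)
Your proposal is correct and follows essentially the same route as the paper: the paper's proof likewise invokes Proposition~\ref{proposition4.4new} to place each $[\text{ES}_k]\big(\mathfrak{e}_k(\bar{\lambda},N)\big)$ in $H^{1}\big(\Gamma(N), V_{k-2,\mathbb{Q}}\big)$ and then concludes from the fact that $\{\mathfrak{e}_k(\bar{\lambda},N) \mid \bar{\lambda} \in \mathcal{I}_{N,k}\}$ is a $\mathbb{K}$-spanning set of $\mathcal{E}_k\big(\Gamma(N),\mathbb{K}\big)$. You merely make explicit the $\mathbb{K}$-linearity of $[\text{ES}_k]$ and the fact that classes defined over $\mathbb{K}$ form a $\mathbb{K}$-subspace, which the paper leaves implicit.
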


\begin{proof}
Proposition~\ref{proposition4.4new} shows that $[\text{ES}_k]\big(\mathfrak{e}_k(\bar{\lambda},N)\big) \in H^{1}\big(\Gamma(N), V_{k-2,\mathbb{Q}}\big)$ for each $\bar{\lambda} \in \mathcal{I}_{N,k}$. Thus, the assertion follows from the discussion above.     
\end{proof}

Suppose that $\mathcal{G}$ is a congruence subgroup containing the principal level $\Gamma(N)$. We compute the Eichler-Shimura homomorphism for $\mathcal{G}$ by restricting to $\Gamma(N)$ as in \eqref{2.3}. 

\begin{theorem}
\label{theorem4.6}
Let $\mathbb{K}$ be a subfield of $\mathbb{C}$ containing $\mathbb{Q}(\mu_N)$. Then the image of $\mathcal{E}_k(\mathcal{G}, \mathbb{K})$ under the Eichler-Shimura map  lies inside $H^{1}\big(\mathcal{G}, V_{k-2,\mathbb{K}}\big)$. 
\end{theorem}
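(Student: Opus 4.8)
The plan is to deduce Theorem~\ref{theorem4.6} from Corollary~\ref{corollary4.5} by a descent argument using the functorial properties of the Eichler--Shimura morphism established in Section~\ref{section2.1}. First I would recall that, since $\mathcal{G}$ contains $\Gamma(N)$ and $\mathcal{G}$ normalizes $\Gamma(N)$, the space $\mathcal{E}_k\big(\Gamma(N),\mathbb{K}\big)$ is stable under the $\lvert_k$-action of $\mathcal{G}$, and by the compatibility of the $\mathbb{K}$-rational structure with restriction to a principal level (Section~\ref{section3.1}) one has $\mathcal{E}_k(\mathcal{G},\mathbb{K}) = \mathcal{E}_k\big(\Gamma(N),\mathbb{K}\big)^{\mathcal{G}}$ inside $\mathcal{E}_k\big(\Gamma(N),\mathbb{K}\big)$. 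So every $f \in \mathcal{E}_k(\mathcal{G},\mathbb{K})$ is simultaneously a $\mathcal{G}$-invariant element of $\mathcal{E}_k\big(\Gamma(N),\mathbb{K}\big)$, which puts us exactly in the setting of the commutative diagram \eqref{2.3} with $\mathcal{H} = \mathcal{G}$, and $\mathcal{X} = \mathcal{E}_k\big(\Gamma(N),\mathbb{K}\big)$, $\mathcal{Y} = \mathcal{E}_k(\mathcal{G},\mathbb{K}) \subseteq \mathcal{X}^{\mathcal{G}}$.

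Next I would invoke \eqref{2.3}: the Eichler--Shimura class $[\text{ES}_k](f) \in H^{1}(\mathcal{G}, V_{k-2,\mathbb{C}})$ restricts, along the canonical map $H^{1}(\mathcal{G}, V_{k-2,\mathbb{C}}) \to H^{1}\big(\Gamma(N), V_{k-2,\mathbb{C}}\big)$, to the class $[\text{ES}_k](f)$ computed for the pair $\big(\Gamma(N), \mathcal{E}_k(\Gamma(N),\mathbb{K})\big)$, which by Corollary~\ref{corollary4.5} lies in the $\mathbb{K}$-form $H^{1}\big(\Gamma(N), V_{k-2,\mathbb{K}}\big)$. The point is now that the restriction map respects the $\mathbb{K}$-structures: since $V_{k-2,\mathbb{K}} \hookrightarrow V_{k-2,\mathbb{C}}$ is $\mathbb{K}[\Gamma]$-linear and $[\mathcal{G}:\Gamma(N)] < \infty$, the change-of-coefficients isomorphisms $H^{1}(\mathcal{G}, V_{k-2,\mathbb{K}})\otimes_{\mathbb{K}}\mathbb{C} \cong H^{1}(\mathcal{G}, V_{k-2,\mathbb{C}})$ and likewise for $\Gamma(N)$ fit into a commutative square with the two restriction maps, and $H^{1}\big(\mathcal{G}, V_{k-2,\mathbb{K}}\big) = H^{1}\big(\mathcal{G}, V_{k-2,\mathbb{C}}\big) \cap H^{1}\big(\Gamma(N), V_{k-2,\mathbb{K}}\big)$, the intersection being taken inside $H^{1}\big(\Gamma(N), V_{k-2,\mathbb{C}}\big)$. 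Concretely, because $\mathbb{K}$ has characteristic $0$ and the index is finite, the restriction map $H^{1}(\mathcal{G}, V_{k-2,\mathbb{K}}) \to H^{1}\big(\Gamma(N), V_{k-2,\mathbb{K}}\big)$ is injective (its composite with corestriction is multiplication by the index), so a class in $H^{1}(\mathcal{G}, V_{k-2,\mathbb{C}})$ whose restriction to $\Gamma(N)$ is $\mathbb{K}$-rational is itself $\mathbb{K}$-rational; this is the standard fact $H^{1}(\mathcal{G}, V_{k-2,\mathbb{K}}) \cong H^{1}\big(\Gamma(N), V_{k-2,\mathbb{K}}\big)^{\mathcal{G}}$ cited at the end of Section~\ref{section2.1} applied over $\mathbb{K}$ rather than $\mathbb{C}$. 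Combining these observations gives $[\text{ES}_k](f) \in H^{1}\big(\mathcal{G}, V_{k-2,\mathbb{K}}\big)$ for every $f \in \mathcal{E}_k(\mathcal{G},\mathbb{K})$, which is the claim.

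The main obstacle I anticipate is the bookkeeping needed to make the last step rigorous, namely verifying that "$\mathbb{K}$-rational after restriction to $\Gamma(N)$" implies "$\mathbb{K}$-rational on $\mathcal{G}$". The cleanest route is to argue entirely at the level of the $\mathbb{K}$-vector spaces $H^{1}(\cdot, V_{k-2,\mathbb{K}})$: use that restriction $H^{1}(\mathcal{G},V_{k-2,\mathbb{K}}) \to H^{1}(\Gamma(N),V_{k-2,\mathbb{K}})$ is injective (via transfer, since $[\mathcal{G}:\Gamma(N)]$ is invertible in $\mathbb{K}$), and that tensoring the injection $H^{1}(\mathcal{G},V_{k-2,\mathbb{K}}) \hookrightarrow H^{1}(\Gamma(N),V_{k-2,\mathbb{K}})$ with $\mathbb{C}$ is still injective and recovers the complex restriction map under the change-of-coefficients isomorphisms (which are themselves compatible, being induced by the single inclusion $V_{k-2,\mathbb{K}}\hookrightarrow V_{k-2,\mathbb{C}}$). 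Then a diagram chase places $[\text{ES}_k](f)$ in the image of $H^{1}(\mathcal{G},V_{k-2,\mathbb{K}})$ inside $H^{1}(\mathcal{G},V_{k-2,\mathbb{C}})$. Everything else is a direct appeal to results already in hand: Corollary~\ref{corollary4.5} supplies the rationality over $\Gamma(N)$, and \eqref{2.3} supplies the compatibility of $[\text{ES}_k]$ with restriction. Finally, Theorem~\ref{theorem1.1} follows at once by taking $\mathbb{K} = \mathbb{Q}(\mu_N)$, since $\mathcal{B}_{\mathcal{G},k} \subseteq \mathcal{E}_k(\mathcal{G},\mathbb{Q}(\mu_N))$ in all the relevant weights (including the modified weight-$2$ basis, whose elements are $\mathbb{Q}$-linear combinations of spectral Eisenstein series, hence lie in $\mathcal{E}_2(\mathcal{G},\mathbb{Q}(\mu_N))$).
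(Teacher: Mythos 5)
Your proposal is correct and follows essentially the same route as the paper: the commutative square of restriction maps from \eqref{2.3}, the compatibility $\mathcal{E}_k(\mathcal{G},\mathbb{K})=\mathcal{E}_k\big(\Gamma(N),\mathbb{K}\big)^{\mathcal{G}}$ from Section~\ref{section3.1}, Corollary~\ref{corollary4.5} for rationality over $\Gamma(N)$, and the descent step via the identification $H^{1}\big(\mathcal{G},V_{k-2,\mathbb{K}}\big)\cong H^{1}\big(\Gamma(N),V_{k-2,\mathbb{K}}\big)^{\mathcal{G}}$ (which you justify, equivalently, by injectivity of restriction via the transfer). The paper's proof phrases the last step as the statement that the preimage of $H^{1}\big(\Gamma(N),V_{k-2,\mathbb{K}}\big)$ under the right vertical arrow is exactly $H^{1}\big(\mathcal{G},V_{k-2,\mathbb{K}}\big)$, which is the same point you make.
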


\begin{proof}
We consider the commutative diagram 
\begin{equation*}
\begin{tikzcd}
\mathcal{M}_k(\mathcal{G}) \arrow[r, "{[\text{ES}_k]}"] \arrow[d]& H^{1}\big(\mathcal{G}, V_{k-2,\mathbb{C}}\big) \arrow[d]\\
\mathcal{M}_k\big(\Gamma(N)\big) \arrow[r, "{[\text{ES}_k]}"] & H^{1}\big(\Gamma(N), V_{k-2,\mathbb{C}}\big) 
\end{tikzcd}
\end{equation*}
where the vertical arrows are restriction maps. Let $f \in \mathcal{E}_k(\mathcal{G}, \mathbb{K})$. The compatibility for the rational structures of the space of Eisenstein series (Section~\ref{section3.1}) shows that the left vertical arrow maps $f$ inside $\mathcal{E}_k(\Gamma(N), \mathbb{K})$. Therefore, by Corollary~\ref{corollary4.5} the bottom arrow carries $f$ in $H^{1}\big(\Gamma(N), V_{k-2,\mathbb{K}}\big)$. Note that the isomorphism $H^{1}\big(\mathcal{G}, V_{k-2,\mathbb{C}}\big) \cong H^{1}\big(\Gamma(N), V_{k-2,\mathbb{C}}\big)^{\mathcal{G}}$ arising from restriction induces an isomorphism at the level of corresponding $\mathbb{K}$-forms: $H^{1}\big(\mathcal{G}, V_{k-2,\mathbb{K}}\big) \cong H^{1}\big(\Gamma(N), V_{k-2,\mathbb{K}}\big)^{\mathcal{G}}$. In particular, the preimage of $H^{1}\big(\Gamma(N), V_{k-2,\mathbb{K}}\big)$ under the right vertical arrow is precisely $H^{1}\big(\mathcal{G}, V_{k-2,\mathbb{K}}\big)$. Therefore, the image of $f$ under the top arrow must lie in $H^{1}\big(\mathcal{G}, V_{k-2,\mathbb{K}}\big)$. 
\end{proof}

\paragraph{Proof of Theorem~\ref{theorem1.1}.} Let the notation be as in the statement of the theorem. Suppose that $f \in \mathcal{B}_{\mathcal{G},k}$. It is clear that $f \in \mathcal{E}_k\big(\mathcal{G}, \mathbb{Q}\big)$. Letting $\mathbb{K} = \mathbb{Q}(\mu_N)$ in Theorem~\ref{theorem4.6}, we see that the image of $f$ under the Eichler-Shimura map is defined over $\mathbb{Q}(\mu_N)$ as desired. \hfill $\square$

Observe that the $\mathbb{K}$-subspace $H^{1}(\mathcal{G}, V_{k-2, \mathbb{K}})$ of $H^{1}(\mathcal{G}, V_{k-2, \mathbb{C}})$ is stable under the action of a double coset operator. Thus, one can use Theorem~\ref{theorem4.6} to conclude that the action of a double coset operator is definable on our $\mathbb{K}$-rational spaces of Eisenstein series; cf. Proposition~7.6 in \cite{myself}. 

\begin{corollary}
\label{corollary4.7}
Let the notation be as in Theorem~\ref{theorem4.6}. Then $\mathcal{E}_{k}(\mathcal{G}, \mathbb{K})$ is stable under the action of double coset operators. 
\end{corollary}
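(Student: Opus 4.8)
}
The plan is to transport the problem, via the Eichler--Shimura map, into a statement about the $\mathbb{K}$-form $H^{1}(\mathcal{G},V_{k-2,\mathbb{K}})$, whose stability under double coset operators has already been recorded just before the statement of the corollary. Fix $\alpha\in\mathrm{GL}_2^{+}(\mathbb{Q})$ and write $[\mathcal{G}\alpha\mathcal{G}]$ for the associated weight $k$ double coset operator. First I would recall the two ingredients that are classical and weight-independent. (i) Over $\mathbb{C}$, the Eisenstein subspace $\mathcal{E}_{k}(\mathcal{G})$ is preserved by $[\mathcal{G}\alpha\mathcal{G}]$: the adjoint of $[\mathcal{G}\alpha\mathcal{G}]$ with respect to the Petersson pairing is the double coset operator attached to $\det(\alpha)\alpha^{-1}$, and both carry $\mathcal{S}_{k}(\mathcal{G})$ into itself, so the Petersson-orthogonal complement $\mathcal{E}_{k}(\mathcal{G})$ of $\mathcal{S}_{k}(\mathcal{G})$ in $\mathcal{M}_{k}(\mathcal{G})$ is stable as well (the pairing of an Eisenstein series against a cusp form converges, so no regularization is needed). (ii) The map $[\text{ES}_k]\colon \mathcal{M}_{k}(\mathcal{G})\to H^{1}(\mathcal{G},V_{k-2,\mathbb{C}})$ is injective --- this is immediate from the Eichler--Shimura isomorphism \eqref{1.2} --- and, applying the compatibility recorded in Section~\ref{section2.1} with $\mathcal{X}=\mathcal{M}_{k}(\mathcal{G})$ (which is $[\mathcal{G}\alpha\mathcal{G}]$-stable), it intertwines the action of $[\mathcal{G}\alpha\mathcal{G}]$ on $\mathcal{M}_{k}(\mathcal{G})$ with the action of $[\mathcal{G}\alpha\mathcal{G}]$ on cohomology.

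Now take $f\in\mathcal{E}_{k}(\mathcal{G},\mathbb{K})$ and set $g:=f\,\big|\,[\mathcal{G}\alpha\mathcal{G}]$. By (i), $g\in\mathcal{E}_{k}(\mathcal{G})$, and by (ii),
\[
[\text{ES}_k](g)=[\text{ES}_k](f)\,\big|\,[\mathcal{G}\alpha\mathcal{G}].
\]
By Theorem~\ref{theorem4.6} we have $[\text{ES}_k](f)\in H^{1}(\mathcal{G},V_{k-2,\mathbb{K}})$, and this $\mathbb{K}$-form is stable under $[\mathcal{G}\alpha\mathcal{G}]$, hence $[\text{ES}_k](g)\in H^{1}(\mathcal{G},V_{k-2,\mathbb{K}})$. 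It remains to descend this rationality back to $g$. For this, use that the spectral basis $\mathcal{B}_{\mathcal{G},k}$ is simultaneously a $\mathbb{K}$-basis of $\mathcal{E}_{k}(\mathcal{G},\mathbb{K})$ and a $\mathbb{C}$-basis of $\mathcal{E}_{k}(\mathcal{G})$; by Theorem~\ref{theorem4.6} and the injectivity in (ii), its image under $[\text{ES}_k]$ consists of $\mathbb{C}$-linearly independent elements of $H^{1}(\mathcal{G},V_{k-2,\mathbb{K}})$, hence can be completed to a $\mathbb{K}$-basis of that $\mathbb{K}$-form and thus to a $\mathbb{C}$-basis of $H^{1}(\mathcal{G},V_{k-2,\mathbb{C}})$. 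Expanding $g=\sum_{e\in\mathcal{B}_{\mathcal{G},k}}c_{e}\,e$ with $c_{e}\in\mathbb{C}$ and reading off coordinates in this basis, the condition $[\text{ES}_k](g)=\sum_{e}c_{e}[\text{ES}_k](e)\in H^{1}(\mathcal{G},V_{k-2,\mathbb{K}})$ forces every $c_{e}\in\mathbb{K}$, i.e. $g\in\mathcal{E}_{k}(\mathcal{G},\mathbb{K})$. Since $\alpha$ was arbitrary, $\mathcal{E}_{k}(\mathcal{G},\mathbb{K})$ is stable under all double coset operators.

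The step I expect to require the most care is the first reduction, namely the $\mathbb{C}$-stability of $\mathcal{E}_{k}(\mathcal{G})$ under $[\mathcal{G}\alpha\mathcal{G}]$ together with the Hecke-equivariance of $[\text{ES}_k]$ in the precise normalization used here: one must make sure the conventions for the double coset action on $\mathcal{M}_{k}$ match those on $H^{1}(\mathcal{G},V_{k-2,\mathbb{C}})$, so that the intertwining identity above is literally correct (cf. Proposition~7.6 in \cite{myself}). The concluding descent is elementary linear algebra over the extension $\mathbb{C}/\mathbb{K}$ and poses no difficulty.
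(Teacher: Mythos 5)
Your argument is correct and follows essentially the same route as the paper: the paper's proof is the one-line observation that $T(f)\in\mathcal{E}_k(\mathcal{G})\cap[\text{ES}_k]^{-1}\big(H^{1}(\mathcal{G},V_{k-2,\mathbb{K}})\big)=\mathcal{E}_{k}(\mathcal{G},\mathbb{K})$, which packages exactly your three steps (stability of the complex Eisenstein space, Hecke-equivariance of $[\text{ES}_k]$ together with Theorem~\ref{theorem4.6} and the stability of the $\mathbb{K}$-form, and the linear-algebra descent via injectivity of $[\text{ES}_k]$ on $\mathcal{M}_k(\mathcal{G})$). You have merely written out in full the details the paper leaves implicit, in particular the descent step, which is carried out correctly.
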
  

\begin{proof}
Let $f \in \mathcal{E}_{k}(\mathcal{G}, \mathbb{K})$ and $T$ be a double coset operator attached to $\mathcal{G}$. Then \[T(f) \in \mathcal{E}_k(\mathcal{G}) \cap [\text{ES}_k]^{-1}\big(H^{1}(\mathcal{G}, V_{k-2,\mathbb{K}})\big) = \mathcal{E}_{k}(\mathcal{G}, \mathbb{K})\]
where the latter equality is a consequence of the theorem. 
\end{proof}

\begin{remark}
\label{remark4.8}
In this article, we compute the period polynomials for a special family of Eisenstein series on $\Gamma(N)$ and conclude the general statement using formal properties. Our method, however, can be enhanced to an algorithm to compute the periods of the spectral Eisenstein series on a general congruence subgroup in the induced picture; cf. \cite{myself}. The elliptic Eisenstein series are related to the $G$-series through explicit linear combinations, which enables us to write down the periods for $G$-series. Moreover, the spectral Eisenstein series on $\Gamma(N)$ are related to $G$-series via explicit identities with coefficients lying in $(2 \pi i)^k\mathbb{Q}(\mu_N)$ \cite[5.2]{myself}. Finally, a spectral Eisenstein series on $\mathcal{G}$ can be expressed as a sum of the spectral Eisenstein series on a principal level contained in it \cite[4.1]{myself}. However, we bypass the complicated computation and directly obtain rationality statements thanks to a robust formalism.  
\end{remark}

\section{An invariant of Zagier and Gangl}
\label{section5}
Let $K$ be an imaginary quadratic field and $\mathcal{O}_K$ be the ring of integers of $K$. With an ideal class $\mathfrak{r}$ of $\mathcal{O}_K$, one associates the partial zeta function $\zeta_{K, \mathfrak{r}}(s) = \sum_{\mathfrak{a} \in \mathfrak{r}} \frac{1}{N\mathfrak{a}^s}$. Zagier and Gangl \cite[7]{zagier-gangl} constructed a naturally defined invariant $\mathcal{I}_m(\mathfrak{r}) \in \mathbb{C}/\mathbb{Q}(m)$ so that 
\[\mathfrak{R}_m\big(\mathcal{I}_m(\mathfrak{r})\big) \sim_{\mathbb{Q}^{\times}} \frac{\lvert D_K \rvert^{m-\frac{1}{2}}}{\pi^m}\zeta_{K, \mathfrak{r}}(m)  \hspace{.3cm}(\substack{m \geq 2})\]
where $D_K$ is the discriminant of the extension $K/\mathbb{Q}$ and other notations are as in Section~\ref{section1.1}. Next, we use our generalization of the rational cocycle for the Eisenstein series to extend this construction to the Hecke $L$-functions.  

Let $\mathfrak{f}$ be an integral ideal of $\mathcal{O}_K$. Suppose that $I(\mathfrak{f})$ is the group of fractional ideals in $K$ coprime to $\mathfrak{f}$ and $P(\mathfrak{f})$ is the is the subgroup of principal ideals generated by an $\alpha \in K^{\times}$ so that $\alpha \equiv 1(\text{mod }\mathfrak{f})$. Now suppose $\chi: I(\mathfrak{f}) \to \mathbb{C}^{\times}$ is an algebraic Hecke character of type $(\delta,\delta)$ for some $\delta \in \mathbb{Z}$, that is to say, $\chi\big((\alpha)\big) = \lvert \alpha \rvert^{2\delta}$ for each $(\alpha) \in P(\mathfrak{f})$ \cite[Ch. 0]{schappacher}. If $\delta = 0$, then $\chi$ descends to a finite order Hecke character on the generalized class group $\mathfrak{C}_{\mathfrak{f}} = I(\mathfrak{f})/P(\mathfrak{f})$. The motive attached to a general algebraic Hecke character $\chi$ of type $(\delta,\delta)$ equals the $\delta$-th Tate twist of the Artin motive attached to $\chi$. At the level of $L$-functions, the Tate twist amounts to a shift in the argument. In more detail,  
\begin{equation}
\label{5.1}
\begin{aligned}
L(s, \chi) = \sum_{\substack{\mathfrak{a} \subseteq \mathcal{O}_K,\\ \gcd(\mathfrak{a}, \mathfrak{f}) = 1}} \frac{\chi(\mathfrak{a})}{N\mathfrak{a}^s} & = \sum_{\mathfrak{r} \in \mathfrak{C}_{\mathfrak{f}}} \sum_{\mathfrak{a} \in \mathfrak{r}} \frac{\chi(\mathfrak{a})}{N\mathfrak{a}^s} \hspace{1cm}\big(\substack{\text{Re}(s) > \delta+1}\big)\\
& = \frac{1}{w_{\mathfrak{f}}} \sum_{\mathfrak{r} \in \mathfrak{C}_{\mathfrak{f}}} \frac{N\mathfrak{b}_{\mathfrak{r}}^s}{\chi(\mathfrak{b}_{\mathfrak{r}})} \sum_{\eta \in \eta_{\mathfrak{r}}+ \mathfrak{b}_{\mathfrak{r}}\mathfrak{f}} \frac{1}{\lvert \eta \rvert^{2(s-\delta)}}
\end{aligned}
\end{equation}
where $w_{\mathfrak{f}}$ is the number of roots of unity in $\mathcal{O}_{K}^{\times}$ congruent to $1$ mod $\mathfrak{f}$, $\mathfrak{b}_{\mathfrak{r}} \in I(\mathfrak{f})$ is an integral ideal in the class $\mathfrak{r}^{-1}$, and $\eta_{\mathfrak{r}} \in \mathcal{O}_K$ is a solution to the system of congruences $\eta \equiv 1($\text{mod }$\mathfrak{f})$, $\eta \equiv 0($\text{mod }$\mathfrak{b}_{\mathfrak{r}})$ \cite[p.131]{lang}. We fix an embedding of $K$ into $\mathbb{C}$ and consider $\mathfrak{b}_{\mathfrak{r}}\mathfrak{f}$ as a lattice in $\mathbb{C}$. Let $N_{\mathfrak{r}}$ be the smallest positive integer so that $N_{\mathfrak{r}}\eta_{\mathfrak{r}} \in \mathfrak{b}_{\mathfrak{r}}\mathfrak{f}$. Write $\mathcal{L}_{\mathfrak{r}} := \frac{1}{N_{\mathfrak{r}}} \mathfrak{b}_{\mathfrak{r}}\mathfrak{f} \subseteq \mathbb{C}$. Observe that there exists an ordered $\mathbb{Z}$-basis $\{\omega_1(\mathfrak{r}), \omega_2(\mathfrak{r})\}$ of $\mathcal{L}_{\mathfrak{r}}$ with $\tau(\mathfrak{r}) = \frac{\omega_1(\mathfrak{r})}{\omega_2(\mathfrak{r})} \in \mathbb{H}$ and $\omega_2(\mathfrak{r}) \in \mathbb{Q}_{>0}$. Choose such a basis and write $\eta_{\mathfrak{r}} = \lambda_1(\mathfrak{r}) \omega_1(\mathfrak{r}) + \lambda_2(\mathfrak{r}) \omega_2(\mathfrak{r})$ with $\lambda_{\mathfrak{r}} = \big(\lambda_1(\mathfrak{r}), \lambda_2(\mathfrak{r})\big) \in \Lambda$. Now suppose $f_{\mathfrak{r}}(X) = aX^{2} + bX + c \in \mathbb{Z}[X]$ is the primitive minimal polynomial of $\tau(\mathfrak{r})$ with $a > 0$. Let $D_{\mathfrak{r}}$ be the discriminant of $f_{\mathfrak{r}}[X]$. The discriminant is related with the volume of $\mathcal{L}_{\mathfrak{r}}$, denoted $V_{\mathfrak{r}}$, through the identity $V_{\mathfrak{r}} = \omega_2(\mathfrak{r})^2 \text{Im}\big(\tau(\mathfrak{r})\big) =  \frac{\omega_2(\mathfrak{r})^2}{2a} \sqrt{\lvert D_{\mathfrak{r}}\rvert}$. 

Now, for an integer $m \geq2$, one can rewrite \eqref{5.1} as   
\begin{equation*}
\begin{aligned}
L(m + \delta, \chi) & = \frac{1}{w_{\mathfrak{f}}} \sum_{\mathfrak{r} \in \mathfrak{C}_{\mathfrak{f}}} \frac{N\mathfrak{b}_{\mathfrak{r}}^{m+\delta}}{\chi(\mathfrak{b}_{\mathfrak{r}})} \frac{1}{\omega_2(\mathfrak{r})^{2m}}\sum_{\substack{(c,d) \equiv \lambda_{\mathfrak{r}}(N_{\mathfrak{r}}), \\ (c,d) \neq (0,0)}} \frac{1}{\lvert c\tau(\mathfrak{r}) + d\rvert^{2m}},\\
& = \frac{1}{w_{\mathfrak{f}}N_{\mathfrak{r}}^2}\sum_{\mathfrak{r} \in \mathfrak{C}_{\mathfrak{f}}} \frac{N\mathfrak{b}_{\mathfrak{r}}^{m+\delta}}{\chi(\mathfrak{b}_{\mathfrak{r}})} \sum_{\bar{\lambda} \in \Lambda/N_{\mathfrak{r}}\Lambda} \mathbf{b}(\bar{\lambda}_{\mathfrak{r}}, \bar{\lambda}) \psi_{\mathfrak{r}}\big(m, \tau(\mathfrak{r}), \bar{\lambda}\big)  
\end{aligned}
\end{equation*}
where \[\psi_{\mathfrak{r}}\big(m, \tau(\mathfrak{r}), \bar{\lambda}\big) = \omega_2(\mathfrak{r})^{-2m}e_{m,m}\big(\tau(\mathfrak{r}), \bar{\lambda}, N_{\mathfrak{r}}\big) = \frac{\text{Im}\big(\tau(\mathfrak{r})\big)^m}{V_{\mathfrak{r}}^m}e_{m,m}\big(\tau(\mathfrak{r}), \bar{\lambda}, N_{\mathfrak{r}}\big)\] and $e_{m,m}(\cdot)$ is the nonholomorphic elliptic Eisenstein series described in \eqref{3.8new}. Note that $\psi_{\mathfrak{r}}$ depends on the choice of basis $\{\omega_1(\mathfrak{r}), \omega_2(\mathfrak{r})\}$ for $\mathcal{L}_{\mathfrak{r}}$ and replacing $\tau(\mathfrak{r})$ by $\gamma\tau(\mathfrak{r})$ gives rise to the transformation formula
\[\psi_{\mathfrak{r}}\big(m, \gamma\tau(\mathfrak{r}), \bar{\lambda}\gamma^{-1}\big) = \psi_{\mathfrak{r}}\big(m, \tau(\mathfrak{r}), \bar{\lambda}\big). \hspace{.3cm}(\substack{\gamma \in \Gamma})\]  
However, $\psi_{\mathfrak{r}}$ is determined by the constructions involving $(\mathfrak{b}_\mathfrak{r}, \mathfrak{f})$ and is independent of the data $\chi$. If $\mathfrak{f} = \mathcal{O}_K$ and $\chi$ is trivial, then $\psi_{\mathfrak{r}}\big(m, \tau(\mathfrak{r}), \bar{\lambda}\big)$ equals $\zeta_{K, \mathfrak{r}}(m)$ up to a nonzero rational multiple. Our result generalizing the construction of Zagier and Gangl is as follows:

\begin{theorem}
\label{theorem5.1}
Let the notation be as above. There exists a collection of $\mathbb{C}$-valued real analytic functions $\{\Psi_{\mathfrak{r}}\big(m, \tau, \bar{\lambda}\big) \mid \bar{\lambda} \in \Lambda/N_{\mathfrak{r}}\Lambda\}$ on $\mathbb{H}$ so that for each $\bar{\lambda} \in \Lambda/N_{\mathfrak{r}}\Lambda$ we have
\begin{enumerate}[label=(\roman*), align=left, leftmargin=0pt]
\item $\mathfrak{R}_m\big(\Psi_{\mathfrak{r}}(m, \tau(\mathfrak{r}), \bar{\lambda})\big) \sim_{\mathbb{Q}^{\times}} \frac{\lvert D_{\mathfrak{r}} \rvert^{m-\frac{1}{2}}}{\pi^m}\psi_{\mathfrak{r}}\big(m, \tau(\mathfrak{r}), \bar{\lambda}\big)$, 
\item $\Psi_{\mathfrak{r}}\big(m, \gamma\tau(\mathfrak{r}), \bar{\lambda}\gamma^{-1}\big) \equiv \Psi_{\mathfrak{r}}\big(m, \tau(\mathfrak{r}), \bar{\lambda}\big)$ in $\mathbb{C}/\mathbb{Q}(m)$, $\forall \gamma \in \Gamma$.  
\end{enumerate} 
\end{theorem}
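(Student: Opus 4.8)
The strategy is to transport the rationality statement of Proposition~\ref{proposition4.4new} through the analytic machinery of Section~\ref{section3.3} in order to produce a $\mathbb{C}/\mathbb{Q}(m)$-valued lift of $\psi_{\mathfrak{r}}$. First I would examine the relationship between the nonholomorphic elliptic Eisenstein series $e_{m,m}(\bar\lambda, N_{\mathfrak{r}})$ appearing in $\psi_{\mathfrak{r}}$ and the holomorphic series $e_m(\bar\lambda, N_{\mathfrak{r}})$ whose period polynomial was shown to be rational. Comparing the Fourier expansions in Corollary~\ref{corollary3.5} with those in Lemma~\ref{lemma3.1}, one sees that the $q_N$-part of $e_{m,m}$ and the $\bar q_N$-part are each governed by the same divisor-sum data that controls $e_m$, packaged with powers of $\frac{1}{v(\tau)}$. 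The key observation is that the value of $e_{m,m}(\tau,\bar\lambda,N_{\mathfrak{r}})$ at a CM point $\tau(\mathfrak{r})$, multiplied by the appropriate power of $\mathrm{Im}(\tau(\mathfrak{r}))$, is — up to a rational factor and a factor of $\pi$ — the value of the $L^\ast$-function of the \emph{holomorphic} weight-$2m$ Eisenstein series, or more precisely is captured by evaluating a suitable period integral at the CM point. I would make this precise by writing $e_{m,m}$ as a $\mathbb{Q}(\mu_{N_{\mathfrak{r}}})$-linear combination of $G_{m,m}$-series via \eqref{3.8new} and then relating $G_{m,m}$ to $G_{2m}$ (equivalently $e_{2m}$) through the Fourier expansion formulas of Theorem~\ref{theorem3.4} and Lemma~\ref{lemma3.1}, absorbing all non-holomorphic dependence into explicit powers of $v(\tau)$.

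Next I would construct the lift $\Psi_{\mathfrak{r}}$. The point is that the transcendental ambiguity in $\psi_{\mathfrak{r}}$ comes exactly from the polylogarithm terms $\mathrm{Li}_{m-1}(\mathbf{e}(\tfrac{\lambda_2}{N}))$ and $\mathrm{Li}_{m-1}(\mathbf{e}(-\tfrac{\lambda_1}{N}))$ that appeared in the period polynomial in Lemma~\ref{lemma4.3} and Proposition~\ref{proposition4.2}, and in Proposition~\ref{proposition4.4new} we showed these can be removed by a $\mathbb{Q}$-coboundary. Concretely, I would define $\Psi_{\mathfrak{r}}(m,\tau,\bar\lambda)$ to be the value, at $\tau$, of the Eichler–Shimura integral $\mathbb{I}_{2m}^{\ast}(\tau, \mathfrak{e}_{2m}(\cdot), \tau)$ attached to the relevant combination of normalized elliptic Eisenstein series, multiplied by the geometric normalization factor $\omega_2(\mathfrak{r})^{-2m}|D_{\mathfrak{r}}|^{\,m-\frac12}$-type constant dictated by part (i). By Remark~\ref{remark2.3}, this is an Eichler integral, well-defined up to an element of $V_{2m-2,\mathbb{C}}$ evaluated at $\tau$; more importantly, since by Proposition~\ref{proposition4.4new} the whole cocycle lives in $V_{2m-2,\mathbb{Q}}$ after a $\mathbb{Q}$-coboundary correction, the indeterminacy of the Eichler integral under the action of $\Gamma$ is a polynomial with rational coefficients. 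Evaluating that polynomial at a CM point $\tau(\mathfrak{r})$, together with the factors $(2\pi i)^{\pm}$ that enter from the normalization $\mathfrak{e}_{2m} = \frac{1}{(-2\pi i)^{2m}} e_{2m}$ and from the relation between $e_{2m}$ and $G_{m,m}$, shows the indeterminacy lands in $\mathbb{Q}(m) = (2\pi i)^m\mathbb{Q}$. This gives a well-defined class in $\mathbb{C}/\mathbb{Q}(m)$, establishing part (ii) — the $\Gamma$-equivariance $\Psi_{\mathfrak{r}}(m,\gamma\tau,\bar\lambda\gamma^{-1}) \equiv \Psi_{\mathfrak{r}}(m,\tau,\bar\lambda)$ is precisely the cocycle relation modulo the rational coboundary. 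For part (i), taking $\mathfrak{R}_m$ (the real or imaginary part according to parity) kills the $\mathbb{Q}(m)$-ambiguity and, by the computation relating the special $L$-value to $\psi_{\mathfrak{r}}$ via Proposition~\ref{proposition4.2} and the CM evaluation, returns $\frac{|D_{\mathfrak{r}}|^{m-1/2}}{\pi^m}\psi_{\mathfrak{r}}(m,\tau(\mathfrak{r}),\bar\lambda)$ up to $\mathbb{Q}^\times$.

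The main obstacle I anticipate is bookkeeping the powers of $2\pi i$, the volume factor $V_{\mathfrak{r}}$, and the discriminant $|D_{\mathfrak{r}}|^{m-1/2}$ so that the ambiguity lands in $\mathbb{Q}(m)$ \emph{on the nose} rather than in some shift $\mathbb{Q}(m')$, and so that taking $\mathfrak{R}_m$ produces exactly $\psi_{\mathfrak{r}}$ and not a different Eisenstein value. This requires carefully tracking how the real-analytic variable $v(\tau) = \mathrm{Im}(\tau)$ interacts with the holomorphic Eichler integral: the series $e_{m,m}$ is \emph{not} holomorphic, so one cannot directly apply the period cocycle of Section~\ref{section4}, and I expect the right move is to split $e_{m,m}$ at a CM point into a holomorphic-Eichler-integral part plus an explicit $\frac{1}{v}$-series whose CM value is manifestly in $\frac{1}{\pi}\cdot(\text{algebraic})$, using Corollary~\ref{corollary3.5} and the fact that at $\tau(\mathfrak{r})$ one has $v(\tau(\mathfrak{r})) = \frac{\sqrt{|D_{\mathfrak{r}}|}}{2a}$, a concrete quadratic irrationality. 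A secondary subtlety is that $\psi_{\mathfrak{r}}$ involves a fixed choice of basis $\{\omega_1(\mathfrak{r}),\omega_2(\mathfrak{r})\}$; one must check the construction of $\Psi_{\mathfrak{r}}$ is compatible with the transformation law $\psi_{\mathfrak{r}}(m,\gamma\tau,\bar\lambda\gamma^{-1}) = \psi_{\mathfrak{r}}(m,\tau,\bar\lambda)$, which is exactly what part (ii) encodes at the level of the lift, and this is where the cocycle formalism of Section~\ref{section2} — in particular the equivariant period function and Lemma~\ref{lemma2.5} — does the heavy lifting.
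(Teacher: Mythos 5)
You have assembled the right ingredients (the rational cocycle of Proposition~\ref{proposition4.4new}, the Eichler integral with base point at $\infty$, evaluation at the CM point $\tau(\mathfrak{r})$), but your construction omits the one idea that makes both parts of the theorem work: the iterated Maass raising operator $\mathbb{D}_m = D^{+}_{-m}\circ D^{+}_{-m-1}\circ\cdots\circ D^{+}_{-2m+2}$ of Lemma~\ref{lemma5.3}. The paper does not take $\Psi_{\mathfrak{r}}$ to be the Eichler integral $\mathbb{I}^{(1)}_{2m}\bigl(\tau, e_{2m}(\bar{\lambda},N_{\mathfrak{r}})\bigr)$ itself (here $\mathbb{I}^{(1)}_{2m}=\mathbb{I}^{\ast}_{2m}+i^{3-2m}L^{\ast}(\cdot,2m-1)$ is the coboundary-corrected choice from Proposition~\ref{proposition4.4new}), but rather $\frac{i|D_{\mathfrak{r}}|^{(m-1)/2}}{\pi^{m}(\tau-\bar{\tau})^{m-1}}\,\mathbb{D}_m$ applied to it. This operator is essential for part (i): the plain Eichler integral has Fourier coefficients proportional to $A_j/j^{2m-1}$ by Lemma~\ref{lemma5.4}, so taking $\mathfrak{R}_m$ of it reproduces only a single term of the sum over $0\le r\le m-1$ in the coefficients $A^{\mathrm{ell}}_j$ of $e_{m,m}$ in Corollary~\ref{corollary3.5}; it is precisely $\mathbb{D}_m=(m-1)!\sum_{r}\binom{-m}{r}\frac{(\tau-\bar{\tau})^{m-1-r}\partial_{\tau}^{m-1-r}}{(m-1-r)!}$ that regenerates the full $r$-sum, after which $\mathfrak{R}_m$ recovers $e_{m,m}$ up to controlled constant terms. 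Your suggested alternative of splitting $e_{m,m}$ into a holomorphic Eichler-integral part plus an explicit $1/v$-series cannot work as stated, because the nonholomorphic dependence is not confined to a separate summand: every Fourier coefficient of $e_{m,m}$ mixes $m$ distinct powers of $1/v$.

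The operator is equally indispensable for part (ii), where you yourself flag the difficulty without resolving it. The ambiguity of your candidate $\Psi_{\mathfrak{r}}$ under $\gamma$ is the cocycle polynomial $P_{\gamma}(\bar{\lambda},\tau)$, of degree $2m-2$ with coefficients in $\mathbb{Q}(2m)=(2\pi i)^{2m}\mathbb{Q}$, evaluated at $\tau(\mathfrak{r})$; this lands in $(2\pi i)^{2m}K$, a two-dimensional $\mathbb{Q}$-space, and varies with $\gamma$, so no constant normalization can push it into the line $\mathbb{Q}(m)=(2\pi i)^{m}\mathbb{Q}$. In the paper's construction, Lemma~\ref{lemma5.3}(iii) shows that $(\tau-\bar{\tau})^{1-m}\mathbb{D}_m\bigl(P_{\gamma}(\bar{\lambda},\tau)\bigr)$ is a homogeneous polynomial of degree $m-1$ in $\frac{1}{\tau-\bar{\tau}}$, $\frac{\tau+\bar{\tau}}{\tau-\bar{\tau}}$, $\frac{\tau\bar{\tau}}{\tau-\bar{\tau}}$; at the CM point each of these lies in $\frac{1}{i\sqrt{|D_{\mathfrak{r}}|}}\mathbb{Q}$, so the ambiguity lies in $\frac{(2\pi i)^{2m}}{(i\sqrt{|D_{\mathfrak{r}}|})^{m-1}}\mathbb{Q}$, and the prefactor $\frac{i|D_{\mathfrak{r}}|^{(m-1)/2}}{\pi^{m}}$ is calibrated exactly so that the product is $(2\pi i)^{m}\mathbb{Q}$. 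Without $\mathbb{D}_m$ and this homogeneity count, neither the well-definedness in $\mathbb{C}/\mathbb{Q}(m)$ nor the identification of $\mathfrak{R}_m(\Psi_{\mathfrak{r}})$ with $\psi_{\mathfrak{r}}$ can be established, so the proposal as written has a genuine gap at its core.
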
 

\begin{remark}
\label{remark5.2}
\begin{enumerate}[label=(\roman*), align=left, leftmargin=0pt]
\item We define the collection of invariants by lifting the elliptic Eisenstein series, rather than Hecke's $G$-series. These two series are related by linear combinations with coefficients in a cyclotomic field. Since the cyclotomic numbers generally do not behave well with the real and imaginary projections, there is no analog of the above statement for the $G$-series. Similarly, the method of this article does not readily extend to the algebraic Hecke characters with unequal indices. 
\item Traditionally, the polylogarithm conjecture is stated in the context of the special values of the Dedekind zeta function. Our work suggests, the polylogarithm conjecture for imaginary quadratic field should admit a generalization to Hecke $L$-functions in terms of $\psi_{\mathfrak{r}}\big(m, \tau(\mathfrak{r}), \bar{\lambda}\big)$. In more detail, we expect that each $\psi_{\mathfrak{r}}\big(m, \tau(\mathfrak{r}), \bar{\lambda}\big)$ can be expressed by the polylogarithm function evaluated at the elements of the $m$-th Bloch group of the ray class field attached to the modulus $\mathfrak{f}$.  
\end{enumerate}
\end{remark}

Our strategy for constructing the desired functions is to apply a specific differential operator to certain Eichler integrals. For an integer $k \in \mathbb{Z}$, let $D_{k}^{+} = k + (\tau - \bar{\tau})\partial_{\tau}$ denote the Maass raising operator. The Maass-Bol identity relates the Maass differential operator to the group action on the upper half plane:  
\[D_{k}^{+}\big((c\tau+d)^{-k}(c\bar{\tau}+d)^{-\ell}f(\gamma\tau)\big) = (c\tau+d)^{-k-1}(c\bar{\tau}+d)^{-\ell+1}D^{+}_k(f)(\gamma \tau)\]
where $f$ is a $\mathbb{C}$-valued $C^{\infty}$ function on $\mathbb{H}$, $\gamma \in \Gamma$, and $k, \ell \in \mathbb{Z}$. Set 
\[\mathbb{D}_{k, p} = D_{k}^{+} \circ D_{k -1}^{+} \circ \cdots \circ D_{k-p}^{+}. \hspace{.3cm}\substack{(p \geq 0)}\] 
Our construction requires the differential operator $\mathbb{D}_{m} := \mathbb{D}_{-m,m-2}$ where $m$ is an integer $\geq 2$.

\begin{lemma}
\label{lemma5.3} \emph{\cite[7]{zagier-gangl}}
Let the notation be as above. 
\begin{enumerate}[label=(\roman*), align=left, leftmargin=0pt]
\item $\mathbb{D}_m = (m-1)!\sum_{r = 0}^{m-1} \binom{-m}{r} \frac{(\tau - \bar{\tau})^{m-1-r} \partial_{\tau}^{m-1-r}}{(m-1-r)!}$.
\item $\mathbb{D}_{m}(\tau^{2m-1}) = (m-1)!\sum_{r = 0}^{m-1} (-1)^{r} \binom{2m-1}{r}\tau^{2m-1-r}\bar{\tau}^r$. 
\item Suppose that $0 \leq n \leq 2m-2$. Then 
\[\frac{\mathbb{D}_m(\tau^n)} {(\tau - \bar{\tau})^{m-1}} = Q (\frac{1}{\tau - \bar{\tau}}, \frac{\tau + \bar{\tau}}{\tau - \bar{\tau}}, \frac{\tau\bar{\tau}}{\tau - \bar{\tau}})\]  
where $Q \in \mathbb{Q}[X,Y,Z]$ is a homogeneous polynomial of degree $(m-1)$. 
\end{enumerate}
\end{lemma}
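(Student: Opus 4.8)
The plan is to reduce everything to one structural fact about the Maass raising operators. Write $u=\tau-\bar\tau$ and $\partial=\partial_\tau$, so the $j$-th factor of $\mathbb{D}_m=\mathbb{D}_{-m,m-2}$ is $D^{+}_{-j}=u\partial-j$ for $j=m,m+1,\dots,2m-2$. Any two of these differ by a scalar, hence commute, and a one-line check gives the conjugation identity $u\partial-j=u^{\,j+1}\circ\partial\circ u^{-j}$. Composing the $m-1$ commuting factors in increasing order of $j$, the conjugating powers of $u$ telescope, and one gets the closed form
\[
\mathbb{D}_m=(\tau-\bar\tau)^{\,2m-1}\circ\partial_\tau^{\,m-1}\circ(\tau-\bar\tau)^{-m}.
\]
Part (i) then drops out by applying the Leibniz rule to $\partial_\tau^{m-1}\bigl((\tau-\bar\tau)^{-m}f\bigr)$ and using $\partial_\tau^{r}(\tau-\bar\tau)^{-m}=r!\binom{-m}{r}(\tau-\bar\tau)^{-m-r}$.

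For part (ii) I would feed $f=\tau^{2m-1}$ into the formula of (i). Since $\partial_\tau^{m-1-r}\tau^{2m-1}=\frac{(2m-1)!}{(m+r)!}\tau^{m+r}$, this presents $\mathbb{D}_m(\tau^{2m-1})$ as an explicit rational combination of monomials $(\tau-\bar\tau)^{a}\tau^{2m-1-a}$; expanding $(\tau-\bar\tau)^a$ and collecting the coefficient of $\tau^{2m-1-b}\bar\tau^{b}$ reduces the claimed identity, for each $b$, to the classical evaluation
\[
\sum_{c=0}^{N}\frac{(-1)^{c}}{m+c}\binom{N}{c}=\frac{(m-1)!\,N!}{(m+N)!}=\int_{0}^{1}t^{m-1}(1-t)^{N}\,dt
\]
with $N=m-1-b$. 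Alternatively, one may expand $\tau^{2m-1}/(\tau-\bar\tau)^m$ into its polynomial part plus principal part at $\tau=\bar\tau$ inside the closed form above, exactly as in the argument for (iii) below: the polynomial part contributes $(m-1)!\,(\tau-\bar\tau)^{2m-1}$ and the principal part supplies precisely the correction that truncates the binomial sum.

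For part (iii) the cleanest route is to compute $\mathbb{D}_m$ first on the generating polynomial $(X-\tau)^{2m-2}$. Using the closed form, $(X-\tau)^{2m-2}/(\tau-\bar\tau)^m$ has polynomial part of degree $m-2$, which $\partial_\tau^{m-1}$ annihilates, and a principal part at $\tau=\bar\tau$ whose coefficient of $(\tau-\bar\tau)^{-i}$ is $\frac{(-1)^{m-i}}{(m-i)!}\frac{(2m-2)!}{(m+i-2)!}(X-\bar\tau)^{m+i-2}$; after applying $\partial_\tau^{m-1}$, multiplying back by $(\tau-\bar\tau)^{2m-1}$ (the factorials cancel), and recognising the remaining sum as the binomial expansion of $\bigl(\tfrac{X-\bar\tau}{\tau-\bar\tau}-1\bigr)^{m-1}=\bigl(\tfrac{X-\tau}{\tau-\bar\tau}\bigr)^{m-1}$, one obtains
\[
\mathbb{D}_m\bigl((X-\tau)^{2m-2}\bigr)=(-1)^{m-1}\frac{(2m-2)!}{(m-1)!}\,(X-\tau)^{m-1}(X-\bar\tau)^{m-1}.
\]
Granting this, I would extract the coefficient of $X^{2m-2-n}$ on both sides of $\mathbb{D}_m\bigl((X-\tau)^{2m-2}\bigr)=\sum_{n=0}^{2m-2}\binom{2m-2}{n}(-1)^{n}X^{2m-2-n}\mathbb{D}_m(\tau^n)$. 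This identifies $\mathbb{D}_m(\tau^n)$ with a nonzero rational multiple of the coefficient of $X^{2m-2-n}$ in $\bigl[X^2-(\tau+\bar\tau)X+\tau\bar\tau\bigr]^{m-1}$, hence with a $\mathbb{Q}$-linear combination of the symmetric monomials $(\tau+\bar\tau)^{n-2k}(\tau\bar\tau)^{k}$ with $\max(0,n-m+1)\le k\le\lfloor n/2\rfloor$. Dividing by $(\tau-\bar\tau)^{m-1}$ and writing each term as $\frac{(\tau+\bar\tau)^{n-2k}(\tau\bar\tau)^{k}}{(\tau-\bar\tau)^{m-1}}=\bigl(\tfrac{1}{\tau-\bar\tau}\bigr)^{m-1-n+k}\bigl(\tfrac{\tau+\bar\tau}{\tau-\bar\tau}\bigr)^{n-2k}\bigl(\tfrac{\tau\bar\tau}{\tau-\bar\tau}\bigr)^{k}$, where $m-1-n+k\ge0$ precisely because $k\ge n-m+1$ and the three exponents sum to $m-1$, exhibits the required homogeneous $Q\in\mathbb{Q}[X,Y,Z]$ of degree $m-1$. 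The one genuinely computational step, and thus the main obstacle, is the displayed evaluation of $\mathbb{D}_m\bigl((X-\tau)^{2m-2}\bigr)$; once it is in hand, the symmetry under $\tau\leftrightarrow\bar\tau$, the divisibility by $(\tau\bar\tau)^{\max(0,n-m+1)}$, and the rationality asserted in (iii) are all immediate corollaries.
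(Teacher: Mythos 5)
Your argument is correct, and it reaches the lemma by a genuinely different route than the paper. The paper proves (i) by an inductive formula for the general $\mathbb{D}_{k,p}$ and then handles (ii) and (iii) by setting up the recurrence $a^{(p+2)}_{r}=(3-2m+p+n-r)a^{(p+1)}_{r}-(n-r+1)a^{(p+1)}_{r-1}$ for the coefficients of $\mathbb{D}_{2-2m+p,p}(\tau^n)=\sum_r a^{(p+1)}_r\tau^{n-r}\bar{\tau}^r$, guessing and verifying the closed-form solution $a^{(p+1)}_{r}=(-1)^{p+1}(p+1)!\binom{n}{r}\binom{2m-2-n}{p+1-r}$, and finally extracting the symmetry $a^{(m-1)}_{r}=a^{(m-1)}_{n-r}$ and the vanishing/divisibility statements needed to invoke the fact that $X+Y$ and $XY$ generate the $\mathbb{Z}/2\mathbb{Z}$-invariants of $\mathbb{Q}[X,Y]$. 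You instead factor each $D^{+}_{-j}=u\partial_\tau-j$ as $u^{j+1}\circ\partial_\tau\circ u^{-j}$ (using $\partial_\tau u=1$) and telescope the commuting product to the closed form $\mathbb{D}_m=(\tau-\bar{\tau})^{2m-1}\circ\partial_\tau^{m-1}\circ(\tau-\bar{\tau})^{-m}$, from which (i) is the Leibniz rule; I checked that your beta-integral evaluation of the coefficient of $\tau^{2m-1-b}\bar{\tau}^{b}$ in (ii) and your principal-part computation of $\mathbb{D}_m\big((X-\tau)^{2m-2}\big)=(-1)^{m-1}\tfrac{(2m-2)!}{(m-1)!}(X-\tau)^{m-1}(X-\bar{\tau})^{m-1}$ both come out right, and the coefficient extraction against $\big[X^2-(\tau+\bar{\tau})X+\tau\bar{\tau}\big]^{m-1}$ does deliver exactly the monomials $(\tau+\bar{\tau})^{n-2k}(\tau\bar{\tau})^{k}$ with $k\ge\max(0,n-m+1)$, so the exponent of $\frac{1}{\tau-\bar{\tau}}$ is nonnegative and $Q$ is homogeneous of degree $m-1$. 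What your route buys is that the symmetry in $\tau\leftrightarrow\bar{\tau}$ and the divisibility by $(\tau\bar{\tau})^{n-m+1}$, which the paper must verify by hand from the recursion, are visible at a glance from the product $(X-\tau)^{m-1}(X-\bar{\tau})^{m-1}$; what the paper's route buys is the explicit coefficient formula for every single $\mathbb{D}_m(\tau^n)$ without passing through a generating polynomial. One cosmetic remark: the telescoping requires the factors to be read with $j$ decreasing from left to right (i.e., $A_{2m-2}\cdots A_{m}$ with $A_j=u^{j+1}\partial_\tau u^{-j}$), which is the reverse of the order in the paper's definition of $\mathbb{D}_{k,p}$; your appeal to commutativity of the factors (they differ by scalars) covers this, but it is worth saying explicitly.
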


\begin{proof}
A straightforward inductive calculation verifies that 
\begin{equation*}
\mathbb{D}_{k, p} = \sum_{r = 0}^{p+1} r! \binom{k}{r}\binom{p+1}{p+1-r}(\tau - \bar{\tau})^{p+1-r} \partial_{\tau}^{p+1-r}. \hspace{.3cm}\substack{(p \geq 0)}
\end{equation*}
Part (i) is a special case of this identity. This expression for $\mathbb{D}_m$ is necessary to compute the action of the differential operator on the Fourier series, but does not directly yield the results involving polynomials in (ii) and (iii). To prove the latter parts, we perform explicit calculation using the identity
\begin{equation}
\label{5.2}
D_{k}^{+}(\tau^{n_1}\bar{\tau}^{n_2}) = (k+n_1)\tau^{n_1}\bar{\tau}^{n_2} - n_1\tau^{n_1-1}\bar{\tau}^{n_2+1}. \hspace{.3cm}(\substack{n_1, n_2, k \in \mathbb{Z}})
\end{equation}
Let $n \in \mathbb{Z}$ be fixed. For each $0 \leq p \leq m-2$ write \[\mathbb{D}_{2-2m+p,p}(\tau^n) = \sum_{r = 0}^{p+1} a^{(p+1)}_{r}\tau^{n-r}\bar{\tau}^r.\]
Suppose that $0 \leq p \leq m-3$. Then, an application of \eqref{5.2} yields the following system of equations:
\begin{align*}
a_{0}^{(p+2)} & = (3-2m+p+n)a_{0}^{(p+1)},\\
a_{r}^{(p+2)} & = (3-2m+p+n-r)a^{(p+1)}_{r} - (n-r+1)a_{r-1}^{(p+1)}, \hspace{.2cm} (\substack{1 \leq r \leq p+1})\\
a_{p+2}^{(p+2)} &= -(n-p-1)a_{p+1}^{(p+1)}. 
\end{align*}  
Note that $a^{(1)}_{0} = 2 - 2m + n$ and $a^{(1)}_{1} = -n$. Now, an easy calculation verifies that the sequence defined by 
\[a^{(p+1)}_{r} = (-1)^{p+1} (p+1)! \binom{n}{r} \binom{2m-2-n}{p+1-r} \hspace{.5cm} \big(\substack{0 \leq p \leq m-2, \; 0 \leq r \leq p+1}\big)\]
provides the unique solution to the recurrence above where $\binom{\bullet}{\bullet}$ is the formal binomial coefficient described in Section~\ref{section1.1}. Letting $p = m-2$ and $n = 2m-1$, we readily deduce the identity in part (ii). Suppose that $0 \leq n \leq m-1$. Then $a^{(m-1)}_{r}$ vanishes for each $n+ 1 \leq r \leq m-1$ and satisfies the relation \[a^{(m-1)}_{r} = a^{(m-1)}_{n-r}. \hspace{.3cm}(\substack{0 \leq r \leq n})\] Similarly, if $m \leq n \leq 2m-2$ then $a^{(m-1)}_{r}$ vanishes for each $0 \leq r \leq n-m$ and satisfies the relation  
\[a^{(m-1)}_{r} = a^{(m-1)}_{n-r}. \hspace{.3cm}(\substack{n-m+1 \leq r \leq m-1})\]
Therefore, for each $0 \leq n \leq 2m-2$ we have $\mathbb{D}_{m}(\tau^n) = P(\tau, \bar{\tau})$ where $P \in \mathbb{Q}[X,Y]$ is a homogeneous polynomial of degree $r$ that is invariant under the canonical action of $\mathbb{Z}/2\mathbb{Z}$ defined by switching $X$ and $Y$. Moreover, if $n \geq m$, then $P(X,Y) = (XY)^{n-m+1}P_1(X,Y)$ for some $P_1 \in \mathbb{Q}[X,Y]$. Observe that $X+Y$ and $XY$ generate the $\mathbb{Z}/2\mathbb{Z}$-invariant subalgebra of $\mathbb{Q}[X,Y]$. As a consequence, for each $0 \leq n \leq 2m-2$, there exists a homogeneous polynomial $Q \in \mathbb{Q}[X,Y,Z]$ with $\text{deg}(Q) = m-1$ so that  
\[\frac{\mathbb{D}_m(\tau^n)}{(\tau - \bar{\tau})^{m-1}} = Q(\frac{1}{\tau - \bar{\tau}}, \frac{\tau+ \bar{\tau}}{\tau - \bar{\tau}}, \frac{\tau\bar{\tau}}{\tau - \bar{\tau}}).\]  
\end{proof}

Recall the notion of the Eichler integral from Remark~\ref{remark2.3}. For simplicity, we abbreviate $\mathbb{I}_{k}(\tau, f, \tau)$ as $\mathbb{I}_k(\tau, f)$. Our proof of the theorem requires a formula for the Eichler integrals attached to the base point at infinity. 

\begin{lemma}
\label{lemma5.4}
Let $k$ be an integer $\geq 2$ and $f = \sum_{j = 0}^{\infty} A_j \mathbf{e}(\frac{j\tau}{N})$ be a modular form of weight $k$ on $\Gamma(N)$. Suppose that $\mathbb{I}_{k}^{\ast}(\tau, f, X)$ is the Eichler-Shimura integral with the basepoint at the boundary. Then, 
\[\mathbb{I}_{k}^{\ast}(\tau, f) = \frac{A_0\tau^{k-1}}{k-1} + (k-2)!\sum_{j = 1}^{\infty} \frac{A_jN^{k-1}}{(2\pi i j)^{k-1}}\mathbf{e}(\frac{j\tau}{N}).\]
\end{lemma}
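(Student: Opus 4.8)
The plan is to unwind the definition of $\mathbb{I}^{*}_{k}$ from \eqref{2.4} at $X = \tau$ and to treat the two constituent integrals separately. By Example~\ref{example2.8}, a holomorphic modular form $f = \sum_{j \geq 0} A_j \mathbf{e}(j\tau/N)$ on $\Gamma(N)$ lies in $\mathbb{C}\mathfrak{F}_{\infty}$, with $\mathsf{EP}$-component $f_{\infty} = A_0$ (the constant term) and $\mathfrak{F}_{\infty}$-component $f_{c}(\tau) = \sum_{j \geq 1} A_j \mathbf{e}(j\tau/N)$. I would record this decomposition at the outset, together with the fact, already used in Section~\ref{section2.2}, that $f_{c}(\xi)\xi^{r} \in \mathfrak{F}_{2}$ for $0 \leq r \leq k-2$, which guarantees absolute convergence and path independence of every integral below.

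For the tangential piece one integrates the constant term directly: $\mathbb{I}^{\infty}_{k}(\tau, f, X) = \int_{0}^{\tau} A_0 (X - \xi)^{k-2} d\xi = \tfrac{A_0}{k-1}\big(X^{k-1} - (X-\tau)^{k-1}\big)$, and substituting $X = \tau$ gives $\tfrac{A_0 \tau^{k-1}}{k-1}$, the first term of the asserted formula.

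For the cuspidal piece I would write $\mathbb{I}^{c}_{k}(\tau, f, \tau) = -\int_{\tau}^{\infty} f_{c}(\xi)(\tau - \xi)^{k-2} d\xi$ and parametrize the vertical geodesic from $\tau$ to $\infty$ by $\xi = \tau + it$, $t \in [0,\infty)$, so that $d\xi = i\,dt$ and $\tau - \xi = -it$; this rewrites the integral as $-(-i)^{k-2} i \int_{0}^{\infty} f_{c}(\tau + it)\, t^{k-2}\, dt$. Inserting the expansion $f_{c}(\tau + it) = \sum_{j \geq 1} A_j \mathbf{e}(j\tau/N)\, e^{-2\pi j t/N}$ and interchanging summation with integration is justified by Fubini's theorem: with $z := e^{-2\pi v(\tau)/N} \in (0,1)$, the corresponding double series of absolute values equals $(k-2)!\,N^{k-1}(2\pi)^{-(k-1)} \sum_{j \geq 1} |A_j|\, z^{j} j^{-(k-1)}$, which is finite because $\sum_{j} A_j w^{j}$ has radius of convergence at least $1$. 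Evaluating the Gamma integral $\int_{0}^{\infty} e^{-2\pi j t/N} t^{k-2}\, dt = (k-2)!\, N^{k-1} / (2\pi j)^{k-1}$ then yields $\mathbb{I}^{c}_{k}(\tau, f, \tau) = -(-i)^{k-2} i\,(k-2)!\, N^{k-1} \sum_{j \geq 1} A_j \mathbf{e}(j\tau/N) / (2\pi j)^{k-1}$.

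The remaining step is the bookkeeping of the powers of $i$: since $(-i)^{k-2} = (-1)^{k} i^{k-2}$, one gets $-(-i)^{k-2} i \cdot i^{k-1} = -(-1)^{k} i^{2k-2} = -(-1)^{k}(-1)^{k-1} = 1$, hence $-(-i)^{k-2} i = i^{-(k-1)}$, and this scalar turns $N^{k-1}/(2\pi j)^{k-1}$ into $N^{k-1}/(2\pi i j)^{k-1}$. Adding the two pieces gives exactly the claimed identity. I anticipate no genuine obstacle: the only points requiring care are this final sign computation and the (routine) Fubini justification; everything else is direct integration.
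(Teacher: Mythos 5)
Your proof is correct and is exactly the computation the paper leaves implicit (its proof of Lemma~\ref{lemma5.4} reads only ``Follows from the defining formula \eqref{2.4}''): split $f = f_\infty + f_c$ per Example~\ref{example2.8}, integrate the constant term along $[0,\tau]$, and evaluate the cuspidal integral along the vertical geodesic via the Gamma integral. The decomposition, the Fubini justification, and the final power-of-$i$ bookkeeping all check out.
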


\begin{proof}
Follows from the defining formula \eqref{2.4}. 
\end{proof}

\paragraph{Proof of Theorem~\ref{theorem5.1}.} Let the notation be as in the statement of the theorem. Define a choice of Eichler-Shimura integrals for $\mathcal{M}_{2m}\big(\Gamma(N_{\mathfrak{r}})\big)$ by \[\mathbb{I}_{2m}^{(1)}(\tau, f, X) = \mathbb{I}_{2m}^{\ast}(\tau, f, X) + i^{3-2m}L^{\ast}(f, 2m-1)\]
where $L^{\ast}$ is the completed $L$-function in \eqref{2.7}. Then the proof of Proposition~\ref{proposition4.4new} shows that for each $\bar{\lambda} \in \Lambda/N_{\mathfrak{r}}\Lambda$ and $\gamma \in \Gamma$ the difference 
\[P_{\gamma}(\bar{\lambda}, \tau) = (c\tau+d)^{2m-2}\mathbb{I}_{2m}^{(1)}\big(\gamma\tau, e_{2m}(\bar{\lambda}\gamma^{-1}, N_{\mathfrak{r}})\big) - \mathbb{I}_{2m}^{(1)}(\tau, e_{2m}\big(\bar{\lambda}, N_{\mathfrak{r}})\big),\] 
is a polynomial in $\tau$ of degree $2m-2$ with coefficients in $\mathbb{Q}(2m)$. Set 
\[\Psi_{\mathfrak{r}}(m, \tau, \bar{\lambda}) := \frac{i|D_{\mathfrak{r}}|^{\frac{m-1}{2}}}{\pi^{m}(\tau - \bar{\tau})^{m-1}}\mathbb{D}_{m}\big(\mathbb{I}_{2m}^{(1)}\big(\tau, e_{2m}(\bar{\lambda}, N_{\mathfrak{r}})\big)\big). \hspace{.3cm}(\substack{\bar{\lambda} \in \Lambda/N_{\mathfrak{r}}\Lambda})\]
It is clear that $\Psi_{\mathfrak{r}}(m, \tau, \bar{\lambda})$ defines a $\mathbb{C}$-valued real analytic function on $\mathbb{H}$. Now, an application of the Maass-Bol identity yields 
\[\Psi_{\mathfrak{r}}(m, \gamma\tau, \bar{\lambda}\gamma^{-1}) = \frac{i|D_{\mathfrak{r}}|^{\frac{m-1}{2}}}{\pi^{m}(\tau - \bar{\tau})^{m-1}}\mathbb{D}_{m}\big((c\tau+d)^{2m-2}\mathbb{I}_{2m}^{(1)}\big(\gamma\tau, e_{2m}(\bar{\lambda}\gamma^{-1}, N_{\mathfrak{r}})\big)\big).\]
Therefore, \[\Psi_{\mathfrak{r}}(m, \gamma\tau, \bar{\lambda}\gamma^{-1}) - \Psi_{\mathfrak{r}}(m, \tau, \bar{\lambda}) = \frac{i|D_{\mathfrak{r}}|^{\frac{m-1}{2}}}{\pi^{m}(\tau - \bar{\tau})^{m-1}} \mathbb{D}_{m}\big(P_{\gamma}(\bar{\lambda}, \tau)\big).\] Lemma~\ref{lemma5.3}(iii) shows that $(\tau - \bar{\tau})^{1-m}\mathbb{D}_{m}\big(P_{\gamma}(\bar{\lambda}, \tau)\big) = Q_{\gamma}(\frac{1}{\tau - \bar{\tau}}, \frac{\tau+ \bar{\tau}}{\tau - \bar{\tau}}, \frac{\tau\bar{\tau}}{\tau - \bar{\tau}})$ where $Q_{\gamma}$ is homogeneous polynomial in $\mathbb{C}[X,Y,Z]$ of degree $m-1$ with coefficients in $\mathbb{Q}(2m)$. Now suppose $\tau = \tau(\mathfrak{r})$. Then an easy calculation using the definition of $D_{\mathfrak{r}}$ verifies, \[\frac{i \lvert D_{\mathfrak{r}} \rvert^{\frac{m-1}{2}}} {\pi^{m}(\tau - \bar{\tau})^{m-1}}\mathbb{D}_m \big(P_{\gamma}(\bar{\lambda}, \tau)\big) \in \mathbb{Q}(m).\] In other words, the family of functions $\{\Psi_{\mathfrak{r}}(m, \tau, \bar{\lambda}) \mid \bar{\lambda} \in \Lambda/N_{\mathfrak{r}}\Lambda\}$ satisfies the second property in the statement of the theorem. To prove the first part, note that 
\[\mathfrak{R}_{m}(\alpha) = \begin{cases}
	\frac{\alpha + (-1)^{m-1}\bar{\alpha}}{2i}, & \text{if $m$ is even;}\\
	\frac{\alpha + (-1)^{m-1}\bar{\alpha}}{2}, & \text{if $m$ is odd.}
\end{cases} \hspace{.3cm}(\substack{\alpha \in \mathbb{C}})\]
Now, the identity in (i) follows from Lemma~\ref{lemma5.4}, Lemma~\ref{lemma5.3}, and the Fourier expansion formulas in Lemma~\ref{lemma3.1} and Corollary~\ref{corollary3.5}. \hfill $\square$

\paragraph{Acknowledgments.} The author wishes to thank Don Zagier for introducing him to the Eichler-Shimura theory and for lots of valuable advice. He is grateful to the Max Planck Institute for Mathematics, Bonn, for hospitality and financial support during the visit (2018-2019). The author is also indebted to the School of Mathematics at the Tata Institute of Fundamental Research for providing an exciting environment to study and work.

\end{document}